\newtheorem{theorem}{Theorem}[section]
\newtheorem{lemma}[theorem]{Lemma}
\newtheorem{corollary}[theorem]{Corollary}
\theoremstyle{definition}
\newtheorem{definition}[theorem]{Definition}
\theoremstyle{remark}
\numberwithin{equation}{section}
\begin{document}

\title[A Single-Variable Proof of the Omega SPT Congruence Family]{A Single-Variable Proof of the Omega SPT Congruence Family Over Powers of 5}

%    Remove any unused author tags.

%    author one information
\author{}
\address{}
\curraddr{}
\email{}
\thanks{}

%    author two information
\author{Nicolas Allen Smoot}
\address{}
\curraddr{}
\email{}
\thanks{}

\keywords{Partition congruences, modular functions, mock theta functions, smallest parts functions, polynomial localization, modular curve, Riemann surface, genus}

\subjclass[2010]{Primary 11P83, Secondary 30F35}

\date{}

\dedicatory{}

\begin{abstract}
In 2018 Liuquan Wang and Yifan Yang proved the existence of an infinite family of congruences for the smallest parts function corresponding to the third order mock theta function $\omega(q)$.  Their proof took the form of an induction requiring 20 initial relations, and utilized a space of modular functions isomorphic to a free rank 2 $\mathbb{Z}[X]$-module.  This proof strategy was originally developed by Paule and Radu to study families of congruences associated with modular curves of genus 1.  We show that Wang and Yang's family of congruences, which is associated with a genus 0 modular curve, can be proved using a single-variable approach, via a ring of modular functions isomorphic to a localization of $\mathbb{Z}[X]$.  To our knowledge, this is the first time that such an algebraic structure has been applied to the theory of partition congruences.  Our induction is more complicated, and relies on sequences of functions which exhibit a somewhat irregular 5-adic growth.  However, the proof ultimately rests upon the direct verification of only 10 initial relations, and is similar to the classical methods of Ramanujan and Watson.
\end{abstract}

\maketitle

\section{Introduction}

In 2018 Liuquan Wang and Yifan Yang proved \cite{Wang2} the existence of an infinite family of congruences for the smallest parts function for the mock theta function $\omega(q)$.  Wang and Yang's proof employed methods developed by Paule and Radu.  However, the author has discovered that this family of congruences lends itself to a single-variable proof similar in theory to the classical methods of Ramanujan and Watson.  In such a proof certain very interesting algebraic intricacies emerge, which---to our knowledge---are different from all known proofs of partition congruence families.  The purpose of this paper is to elaborate this alternative proof.

The congruence family in question was originally conjectured \cite{Wang} by Wang in 2017, and concerns the $\omega(q)$ mock theta function analogue to the partition $\mathrm{spt}$ function studied by Andrews \cite{Andrews2}.

\begin{theorem}\label{wyfirst}
Let $\lambda_{\alpha}\in\mathbb{Z}$ be the minimal positive solution to $12x\equiv 1\pmod{5^{\alpha}}.$  Then
\begin{align}
\mathrm{spt}_{\omega}\left(2\cdot 5^{\alpha}n+\lambda_{\alpha}\right)\equiv 0\pmod{5^{\alpha}}.
\end{align}
\end{theorem}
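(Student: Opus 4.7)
The plan is to translate the congruence into a statement about the 5-adic valuations of a sequence of modular functions. For each $\alpha \geq 1$, I would introduce the generating function
\begin{equation*}
L_\alpha(q) = \sum_{n \geq 0} \mathrm{spt}_\omega\left(2 \cdot 5^\alpha n + \lambda_\alpha\right) q^n,
\end{equation*}
and multiply by an appropriate eta quotient so that the result becomes a modular function on a suitable congruence subgroup of $\mathrm{SL}_2(\mathbb{Z})$. The theorem then becomes the formal power series statement $L_\alpha \equiv 0 \pmod{5^\alpha}$. The starting input is the known $q$-expansion of $\sum_n \mathrm{spt}_\omega(n) q^n$ in terms of $\omega(q)$ and standard eta quotients, combined with the actions of the $U_2$ and $U_5$ sieving operators, which together yield a recursive relation expressing $L_{\alpha+1}$ in terms of $L_\alpha$.

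Since the relevant modular curve has genus $0$, I would choose a Hauptmodul $X$ whose only pole lies at the cusp $\infty$, and work inside the ring of modular functions holomorphic away from $\infty$. The essential novelty of the single-variable approach is that the $L_\alpha$ do not lie in the polynomial ring $\mathbb{Z}[X]$ itself but in a \emph{localization} $\mathbb{Z}[X]_S$ at a carefully chosen multiplicative subset $S \subset \mathbb{Z}[X]$, whose elements encode the pole behavior of the descent at the remaining cusps. Once this localization is shown to contain every $L_\alpha$ and to be stable under the operator $L_\alpha \mapsto L_{\alpha+1} = U_5(f \cdot L_\alpha)$ for a suitable fixed $f$, the argument reduces to controlling the 5-adic valuations of the coefficients of $L_\alpha$ in a chosen topological basis of $\mathbb{Z}[X]_S$.

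The induction hypothesis cannot be taken to be simply $5^\alpha \mid L_\alpha$; because the 5-adic growth is irregular, one must instead propagate coupled lower bounds on several pieces of the basis expansion in parallel, so that the combined effect of multiplication by $f$, dissection by $U_5$, and the denominators introduced by localization nets out to a gain of one power of $5$ per step. The ten base cases I would verify by direct computation are enough to lock this coupled recursion into its eventual periodic pattern. The principal obstacle will be exactly this irregular 5-adic behavior: unlike the classical Ramanujan--Watson setting, where each descent step yields a clean valuation gain, here the descent acts non-uniformly on the localized basis, and the delicate heart of the proof is to show that localization does not silently absorb factors of $5$ into denominators while the sporadic gains and losses among the basis components still sum to the required divisibility.
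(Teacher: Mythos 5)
Your outline of the core induction is essentially the paper's: work over the genus-zero curve (here $\mathrm{X}_0(10)$), expand in a single Hauptmodul-type function, localize $\mathbb{Z}[y]$ at the multiplicative set generated by $1+5y$, propagate a coupled induction hypothesis because the 5-adic gain is irregular (in the paper this is the extra condition $\sum_{m=1}^{3}s(m)\equiv 0\pmod 5$ defining $\mathcal{W}_n^{(1)}$, where only the \emph{sum} of the contributions to the linear coefficient is divisible by 5), and anchor everything on 10 initial relations verified by cusp analysis. So for that part you and the paper are aligned, up to the detail that the descent alternates between two operators $U^{(1)}$ and $U^{(0)}$ (with and without the multiplier $Z$) rather than a single step $L_\alpha\mapsto U_5(f\cdot L_\alpha)$.

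The genuine gap is at the very first step. You define $L_\alpha$ directly from $\mathrm{spt}_\omega$ and assert that multiplying by an eta quotient makes it a modular function. That assertion fails: the generating function of $\mathrm{spt}_\omega$ involves the third-order mock theta function $\omega(q)$ and is only the holomorphic part of a harmonic Maass form, so no eta-quotient multiple of $\sum_n \mathrm{spt}_\omega(2\cdot 5^\alpha n+\lambda_\alpha)q^n$ is a weakly holomorphic modular form in general, and the entire $U_5$/cusp-analysis machinery has nothing to act on. The missing idea is the reduction, due to Wang and Yang (via the Bailey-pair spt functions $\mathrm{spt}_{C1}$, $\mathrm{spt}_{C5}$ of Garvan and Jennings--Shaffer), of Theorem \ref{wyfirst} to Theorem \ref{Thm12}: the congruence for $\mathrm{spt}_\omega$ is shown to follow from the congruence $c(n)\equiv 0\pmod{5^\alpha}$ for $12n\equiv 1\pmod{5^\alpha}$, where $\sum c(n)q^n=\bigl(2E_2(2\tau)-E_2(\tau)\bigr)/(q^2;q^2)_\infty$ is built from the genuinely modular weight-2 form $2E_2(2\tau)-E_2(\tau)$ on $\Gamma_0(2)$. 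It is only for the progressions of $c$ that the functions $L_\alpha$, the prefactor $F$, and the operators $U^{(i)}$ are defined, and only there that your localized single-variable induction can be carried out. Without this reduction step (which the paper cites rather than reproves), your proposal proves nothing about $\mathrm{spt}_\omega$ itself.
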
  Wang and Yang prove this theorem \cite{Wang2} by relating $\mathrm{spt}_{\omega}$ to the spt functions for certain Bailey pairs $C1$, $C5$ studied by Garvan and Jennings--Shaffer \cite{Garvan2}, as well as the function $c$, defined by

\begin{align}
\sum_{n=0}^{\infty}c(n)q^n := \frac{2E_2(2\tau)-E_2(\tau)}{(q^2;q^2)_{\infty}},\label{E2}
\end{align} with $q:=e^{2\pi i\tau}$, $\tau\in\mathbb{H}$, and $E_2(\tau)$ defined as the normalized weight 2 Eisenstein series (disregarding the nonholomorphic term):

\begin{align*}
E_2(\tau) := 1 - 24\sum_{n=1}^{\infty}\frac{nq^n}{1-q^n}.
\end{align*}  Wang and Yang then show that Theorem \ref{wyfirst} is a consequence of the following:

\begin{theorem}\label{Thm12}
Let $12n\equiv 1\pmod{5^{\alpha}}$.  Then $c(n)\equiv 0\pmod{5^{\alpha}}$.
\end{theorem}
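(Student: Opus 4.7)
My proof plan follows the classical Ramanujan--Watson template, recast in the ring-theoretic language of the Paule--Radu framework but working in a localization of $\mathbb{Z}[X]$ rather than a rank-$2$ module.

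The first step is to encode the residue class into modular functions. Define
$$L_\alpha(\tau) := B_\alpha(\tau) \sum_{n \geq 0} c\bigl(5^\alpha n + \lambda_\alpha\bigr) q^n,$$
where $B_\alpha$ is an explicit $\eta$-quotient chosen so that $L_\alpha$ is a weakly holomorphic modular function on $\Gamma_0(N)$ for an appropriate level $N$, whose prime divisors are $2$ and $5$ (reflecting the definition of $c(n)$ via $E_2(2\tau)$ and the modulus $5^\alpha$). Theorem~\ref{Thm12} is then equivalent to the statement that $L_\alpha$ lies in $5^\alpha$ times an integral ring of such modular functions.

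The second step is to identify this ring $\mathcal{R}$. Because the associated modular curve has genus $0$, its function field is $\mathbb{Q}(x)$ for a single hauptmodul $x$. However, since each $L_\alpha$ may have poles at the cusps other than $\infty$, the ring of integral weakly holomorphic modular functions with poles confined to those cusps takes the form $\mathcal{R} = \mathbb{Z}\bigl[x,\,\phi(x)^{-1}\bigr]$ for an explicit polynomial $\phi$ whose roots record the locations of the bad cusps on $X_0(N)$. This localization is the single-variable analog that replaces Paule and Radu's rank-$2$ module.

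The third step is to derive the recurrence $L_\alpha \mapsto L_{\alpha+1}$ via an Atkin-type $U_5$ operator $\mathcal{U}$, possibly composed with multiplication by a fixed $\eta$-quotient to preserve modularity. One must compute $\mathcal{U}\bigl(x^k \phi(x)^{-j}\bigr)$ explicitly for enough small $(j,k)$ to control the image of $\mathcal{U}$ on all of $\mathcal{R}$; these are the roughly ten initial relations. The induction is then run on a $5$-adic filtration $\{\mathcal{F}_\alpha\}$ of $\mathcal{R}$ designed so that $\mathcal{U}(\mathcal{F}_\alpha) \subseteq \mathcal{F}_{\alpha+1}$ and the base case $L_0 \in \mathcal{F}_0$ holds, which forces $L_\alpha \in \mathcal{F}_\alpha \subseteq 5^\alpha \mathcal{R}$.

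The main obstacle, as the abstract forewarns, is the construction of the filtration $\{\mathcal{F}_\alpha\}$. The $5$-adic growth along the sequence is irregular: different monomials $x^k \phi(x)^{-j}$ must carry different $5$-adic weights, and these weights have to be calibrated so that the localization denominators introduced by $\phi(x)^{-1}$ do not erode the $5$-adic gain supplied by each application of $\mathcal{U}$. Finding a weighting that is simultaneously fine enough for the inductive hypothesis to be non-trivial and coarse enough to be strictly raised by $\mathcal{U}$ is the delicate heart of the argument, and is precisely where the single-variable localization framework must do work that the rank-$2$ module setup of Wang--Yang avoids.
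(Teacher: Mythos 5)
Your outline follows the same route as the paper: level $10$, a hauptmodul-type function $y$ with prefactor $x=1+5y$, the localized ring $\mathbb{Z}[y]_{\mathcal{S}}$ with $\mathcal{S}=\{(1+5y)^n\}$, two $U_5$-type operators (one twisted by an eta quotient $Z$ to pass between the odd and even steps), ten fundamental initial relations, and an induction on a $5$-adic filtration. However, there is a genuine gap at exactly the point you defer: the filtration you describe --- ``different monomials $x^k\phi(x)^{-j}$ must carry different $5$-adic weights, calibrated so that each application of $\mathcal{U}$ strictly raises them'' --- provably cannot be made to work on its own. In the paper's notation, applying $U^{(1)}$ to $\frac{1}{(1+5y)^n}\sum_m s(m)5^{\theta(m)}y^m$ raises the valuation of every coefficient \emph{except} that of $y/(1+5y)^{5n-4}$: the individual contributions $s(m)h_1(m,n,1)5^{\theta(m)+\pi_1(m,1)}$ for $m=1,2,3$ carry no spare power of $5$ whatsoever, so no assignment of weights to monomials alone is increased by $\mathcal{U}$. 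The divisibility of the linear coefficient is not termwise but collective: one must impose the extra linear constraint $\sum_{m=1}^{3}s(m)\equiv 0\pmod{5}$ (the space $\mathcal{W}_n^{(1)}$ of Definition \ref{3termad}), use the fact that $h_1(m,n,1)\equiv 1\pmod 5$ for $1\le m\le 3$ to conclude the sum of those contributions vanishes mod $5$, and then prove that this auxiliary condition is itself reproduced after a full cycle $U^{(0)}\circ U^{(1)}$ (Theorem \ref{w12w1}), which requires the further residue information on the arrays $h_0,h_1$ modulo $5$ collected in Corollary \ref{congcor1} --- data that goes beyond the valuation bounds $\pi_i$ your ``initial relations'' step would naturally record.

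Two smaller corrections to the skeleton: the base case of the induction is not $L_0\in\mathcal{F}_0$ (in the paper $L_0=2E_2(2\tau)-E_2(\tau)$ is a weight-$2$ form, not an element of the function ring); the induction starts from the explicitly verified witness identity for $L_1$, namely $L_1/(5F)\in\mathcal{W}_3^{(1)}$, proved by cusp analysis. Also, the gain per step is not uniform in the way ``$\mathcal{U}(\mathcal{F}_\alpha)\subseteq\mathcal{F}_{\alpha+1}$'' suggests: one power of $5$ is gained at the odd-to-even step via $U^{(1)}$ on $\mathcal{W}$, and the even-to-odd step via $U^{(0)}$ gains a power while landing only in $\mathcal{V}^{(1)}$, with membership in $\mathcal{W}$ recovered only for the composite map; your filtration would need to encode this two-step alternation. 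So while the architecture you propose is the right one, the decisive idea --- that the irregular $5$-adic growth forces a non-valuation-theoretic constraint (a mod-$5$ linear relation among the low-order coefficients) that must be shown to be self-propagating --- is missing, and without it the induction does not close.
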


This family of congruences has a form which is common to the field; in addition to Ramanujan's iconic congruences, \cite{Ramanujan}, \cite{Watson}, \cite{Atkin}, \cite{Knopp} an enormous number of similar results have been found; recent results include \cite{Garvan}, \cite{Paule}, \cite{Smoot}.  The proof of each of these results generally involves an induction argument which takes advantage of some algebraic structure on the associated space of modular functions.

In the case of Ramanujan's classical result for powers of 5, one works over the congruence subgroup $\Gamma_0(5)$ and an associated space of modular functions under specific meromorphic conditions.  Because the algebraic structure of this space of functions is isomorphic to $\mathbb{Z}[X]$, the full family of congruences may be proved by a straightforward process.  This is similarly true for Ramanujan's modified congruence family for powers of 7.  The general technique was first published by Watson \cite{Watson}, although Ramanujan appears to have been the first to understand it \cite{BO}.

However, Ramanujan's congruence family for powers of 11 is more difficult to prove.  This is owed in large measure to the fact that the underlying modular curve $\mathrm{X}_0(11)$ associated with $\Gamma_0(11)$ has genus 1.  To compare, $\mathrm{X}_0(5)$ and $\mathrm{X}_0(7)$ each have genus 0.  This fact ensures that, as a consequence of the Weierstrass gap theorem \cite{Paule3}, the necessary space of functions over $\Gamma_0(11)$ is isomorphic not to $\mathbb{Z}[X]$, but rather to a rank 2 $\mathbb{Z}[X]$-module.

The necessary modification of Ramanujan and Watson's approach was first developed by Atkin \cite{Atkin}.  Later families of congruences associated with a genus 1 modular curve include the Andrews--Sellers conjecture, proved by Paule and Radu \cite{Paule}, and the Choi--Kim--Lovejoy conjecture, proved by the author \cite{Smoot}.  The method used for both of these proofs was developed by Paule and Radu as an important modification of Atkin's approach.

Wang and Yang also employ this method.  In particular, they develop a sequence of weakly holomorphic modular forms $\left(L_{\alpha}\right)_{\alpha\ge 1}$ such that

\begin{align*}
L_{\alpha} = \Phi_{\alpha}\cdot\sum_{n=0}^{\infty}c\left(5^{\alpha}n + \lambda_{\alpha}\right)q^{n+1},
\end{align*} with $\Phi_{\alpha}$ an integer power series with constant term 1, and $\lambda_{\alpha}$ the minimum positive solution to $12x\equiv 1\pmod{5^{\alpha}}$ (see Section 2).  They show that

\begin{align}
\frac{L_{\alpha}}{5^{\alpha}\cdot F} = f_{0,\alpha}(t) + \rho\cdot f_{1,\alpha}(t),\label{WY1}
\end{align} in which $f_{i,\alpha}\in\mathbb{Z}[X]$,

\begin{align*}
F &:= F(\tau) = \frac{1}{24} \left( 50 E_2(10\tau)-25 E_2(5\tau) - 2 E_2(2\tau) + E_2(\tau) \right)
\end{align*} is a weight 2 holomorphic modular form, and $t,\rho$ are modular functions which take the form of eta quotients with integer expansions in the Fourier variable $q$.  The relevant congruence subgroup is $\Gamma_0(10)$.

This is standard to Paule and Radu's approach.  Note the free rank 2 $\mathbb{Z}[X]$-module structure of (\ref{WY1}), which is characteristic of the method.

However, Paule and Radu developed their method in order to overcome the complications which arise from a congruence family in which the associated modular curve has \textit{nonzero genus}.  The genus of $\mathrm{X}_0(10)$ is 0.

It is this extremely important and telling fact that drove us to attempt a more classical proof of Wang and Yang's theorem.

As an example, we take the first case of Theorem \ref{Thm12}.  Define 

\begin{align}
L_{1} &= (q^{10};q^{10})_{\infty} \sum_{n=0}^{\infty}c\left(5n + 3\right)q^{n+1},\label{L1def}
\end{align} in a manner standard to the theory (see Section 2).  Wang and Yang prove that $L_1\equiv 0\pmod{5}$ by showing that

\begin{align}
L_1 = F\cdot \left( \left(245 t+3750 t^2+15625 t^3\right) - \rho\cdot\left( 125 t + 3125 t^2 \right) \right),\label{L1WY}
\end{align} with $t$ and $\rho$ defined as in (\ref{WY1}).  However, we were able to find a function $y$, also modular over $\Gamma_0(10)$, with the following:

\begin{align}
L_1 =& \frac{F}{(1 + 5 y)^3}\cdot\left( 120 y + 1805 y^2 + 12050 y^3 + 39500 y^4 + 50000 y^5\right).\label{L1S}
\end{align}  If we note that $F$ and $(1+5y)^{-1}$ both expand into integer power series in the Fourier variable $q$ with the constant term 1, then we need only examine the remaining portion of the expression---a single-variable polynomial in $y$---to determine divisibility.  Not only does divisibility by 5 emerge very naturally, but there is also a curious occurence of powers of 2 which is not as easily visible in the identity by Wang and Yang.

An interesting complication emerges in the factor $(1+5y)^{-3}$.  One might correctly guess that our relevant space of modular functions for all $\alpha\ge 1$ is isomorphic to a localization of $\mathbb{Z}[X]$, rather than to $\mathbb{Z}[X]$ itself.  Indeed, we have the following remarkable result, the principal theorem of this paper:

\begin{theorem}\label{Mythm}
Let

\begin{align*}
\psi(\alpha):=\left\lfloor \frac{5^{\alpha+1}}{12} \right\rfloor + 1.
\end{align*}  For all $\alpha\ge 1$,

\begin{align}
\frac{(1+5y)^{\psi(\alpha)}}{5^{\alpha}\cdot F}\cdot L_{\alpha}\in\mathbb{Z}[y].\label{myeqn}
\end{align}

\end{theorem}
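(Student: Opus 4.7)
The plan is to establish the theorem through a single-variable induction, using a $U_5$-style modular operator acting on a ring of modular functions on $\Gamma_0(10)$ isomorphic to the localization $\mathcal{R} := \mathbb{Z}[y]\bigl[(1+5y)^{-1}\bigr]$. Before handling the induction, I would first verify that $\mathcal{R}$ has the claimed structure: $y$ should play the role of a Hauptmodul-like generator whose nonnegative integer powers account for modular functions with poles only at one cusp, while the factor $(1+5y)$ vanishes at a second cusp, so inverting it yields precisely the space of modular functions containing $L_\alpha / F$. Because $\mathrm{X}_0(10)$ has genus $0$, the Weierstrass gap theorem poses no obstruction, and a single generator $y$ together with the localizing factor $(1+5y)$ is sufficient.

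Next I would define an appropriate Hecke-type operator $\mathcal{U}$ such that $\mathcal{U}(L_\alpha)$ equals $L_{\alpha+1}$ up to a known modular correction involving $F$. The technical heart of the argument will be a transition lemma describing the action of $\mathcal{U}$ on the basis elements $y^k(1+5y)^{-m}$ of $\mathcal{R}$: for each such input,
\begin{align*}
\mathcal{U}\bigl( y^k (1+5y)^{-m} \bigr) \;=\; \sum_{j,\, m'} b_{k,m,j,m'} \, y^j (1+5y)^{-m'},
\end{align*}
with explicit lower bounds on $\nu_5(b_{k,m,j,m'})$ and an upper bound on the pole growth $m' - m$. The exponent $\psi(\alpha) = \lfloor 5^{\alpha+1}/12 \rfloor + 1$ should be precisely calibrated so that this pole growth matches $\psi(\alpha+1) - \psi(\alpha)$, causing the induction to close: applying $\mathcal{U}$ once improves 5-divisibility by exactly one factor of $5$ while worsening the pole at $1+5y = 0$ by exactly the amount $\psi$ is designed to absorb.

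With the transition lemma in hand, the induction on $\alpha$ writes $(1+5y)^{\psi(\alpha)} L_\alpha / (5^\alpha F)$ as an explicit polynomial $P_\alpha(y) \in \mathbb{Z}[y]$, applies $\mathcal{U}$ term by term, and uses the lemma to conclude that $(1+5y)^{\psi(\alpha+1)} L_{\alpha+1} / (5^{\alpha+1} F) = P_{\alpha+1}(y) \in \mathbb{Z}[y]$. The base cases reduce to explicit polynomial identities in $y$ analogous to (\ref{L1S}); per the abstract, ten such initial relations must be verified directly, which can be done by expanding both sides as $q$-series to a finite order determined by the valence formula on $\Gamma_0(10)$ and the known pole orders at the cusps.

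The main obstacle is the transition lemma itself and the irregular 5-adic growth it produces. Because $\mathcal{R}$ is a localization rather than a free polynomial ring, the $\mathcal{U}$-action nontrivially mixes the filtration by $y$-degree with the filtration by pole order at $1+5y = 0$, and the 5-adic valuations of the coefficients $b_{k,m,j,m'}$ do not follow a clean arithmetic progression; instead they vary in a pattern depending on $k$ and $m$ modulo small integers. Controlling these valuations in parallel with the evolving pole order---likely by introducing auxiliary interleaved integer sequences whose joint behavior is stable under $\mathcal{U}$---is what distinguishes this single-variable strategy from the free rank $2$ module approach of Paule--Radu and Wang--Yang, and is where the principal technical effort of the proof must lie.
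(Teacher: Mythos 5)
Your overall architecture matches the paper's: a localization $\mathbb{Z}[y][(1+5y)^{-1}]$ exploiting genus $0$, a transition lemma giving $5$-adic lower bounds and pole-order growth for the operator acting on $y^m(1+5y)^{-n}$, an induction calibrated so that $\psi$ absorbs the pole growth (in the paper the growth is $n\mapsto 5n-4$ resp.\ $5n-2$, with $5\psi(2\alpha-1)-4=\psi(2\alpha)$ and $5\psi(2\alpha)-2=\psi(2\alpha+1)$, rather than an additive increment $m'-m$), and a small set of initial relations certified by cusp/principal-part analysis. Two structural points are missing but repairable: the operator must alternate, $L_{2\alpha}=U_5(L_{2\alpha-1})$ but $L_{2\alpha+1}=U_5(Z\cdot L_{2\alpha})$ with $Z=\eta(50\tau)/\eta(2\tau)$, so one works with the two maps $U^{(1)}$ and $U^{(0)}$ rather than a single $\mathcal{U}$; and the base relations are verified by comparing principal parts on $\mathrm{X}_0(10)$ after a $U_5$-push from level $50$, which is the rigorous form of your ``expand to finite order'' step.

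The genuine gap is your claim that ``applying $\mathcal{U}$ once improves 5-divisibility by exactly one factor of $5$.'' That is false as stated, and it is precisely the crux of the paper. For $U^{(1)}$ acting on an element of the relevant space, the contributions to the coefficient of $y/(1+5y)^{5n-4}$ coming from $y^m$ with $1\le m\le 3$ carry \emph{no} extra power of $5$ individually; only their \emph{sum} can be made divisible by $5$. The paper handles this by shrinking the target space: Definition \ref{3termad} imposes the side condition $\sum_{m=1}^{3}s(m)\equiv 0\pmod 5$ (the spaces $\mathcal{W}_n^{(1)}$), and Theorem \ref{w12w1} shows that one gains a factor $5$ from $U^{(1)}$ and a factor $5^2$ from the two-step composition $U^{(0)}\circ U^{(1)}$, with the side condition stable only under that composition, not under each single step. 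Proving this stability requires more than valuation lower bounds on the transition coefficients: one needs explicit congruences modulo $5$ for the arrays $h_0(m,n,r)$, $h_1(m,n,r)$ at small $r$ (Corollary \ref{congcor1}, e.g.\ $h_1(m,n,1)\equiv 1$, $h_0(1,n,1)\equiv 1$, $h_0(1,n,2)\equiv 4 \pmod 5$), established by showing the recursion preserves these residues in $n$ and checking five consecutive values. Your ``auxiliary interleaved integer sequences'' gesture in this direction, but without identifying the sum condition, the mod-$5$ coefficient congruences, and the two-step (rather than one-step) stability, the induction as you describe it does not close.
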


We are not aware of any other congruence families in which the proof necessitates such a ring structure, and it would be interesting to know whether any additional examples exist.  Together with Silviu Radu, the author has developed some algorithmic machinery \cite{Raduns}, using the theory of modular functions, which might easily be modified to examine identities with a form similar to (\ref{L1S}).  The author strongly believes that localized rings may yet prove to be an enormously productive environment in which to examine new arithmetic properties in partition theory, especially for situations in which more traditional methods fail.

Another interesting difficulty arises in the somewhat irregular 5-adic growth of each term of $L_{\alpha}$ under repeated application of the corresponding $U_5$ operators.  In particular, in mapping $L_{2\alpha-1}$ to $L_{2\alpha}$, the individual components of the linear coefficient do not increase piecewise with respect to their 5-adic value---rather, the components must be shown \textit{to sum to} the necessary multiple of 5 (see Definition \ref{3termad} and Theorem \ref{w12w1}).

Such a strange complication necessitates a very precise manipulation on the 5-adic growth of our critical functions, together with a careful examination of the coefficients in our auxiliary functions modulo 5.  As in the matter of localization, we are unaware of any other examples of congruence families which demand such a constructive method of verifying divisibility by 5.

A final complication emerges in the base cases of our key lemmas.  We require the verification of 50 initial relations.  However, we can show that these 50 are algebraically dependent, and that \textit{a total of only 10 initial relations} need be directly established.  This stands in contrast to the 20 that Wang and Yang require for their proof.  From these 10 relations, the 50 relations necessary for our induction may be assembled and verified with relative ease through a computer algebra system.  The computational complexity is striking; nevertheless, the reliance (in principle, at least) upon so few relations, together with the single-variable approach, compels us to call our proof classical, or perhaps ``semiclassical.''

In total, these complications seem overwhelming, and it is understandable that a single-variable proof has not been found before now.  It seems that the genus of the underlying modular curve alone is sufficient to compel a single-variable proof, in spite of the many considerable difficulties.

The remainder of our paper is outlined as follows:  In Section 2 we define the necessary functions $L_{\alpha}$, along with certain auxiliary functions.  We also define the modular function $y$, from which we will develop certain modular equations.  In Section 3 we will develop some important module structures, including our polynomial localization.  Following this, we develop our modified $U_5$ operators, and prove certain key properties of $U_5$ on our localized ring, including some important arithmetical information.

In Section 4 we carefully demonstrate 5-adic convergence of our relevant space of functions.  In particular, we impose certain additional arithmetic constraints on our functions in order to control the somewhat irregular behavior of our 5-adic growth, especially over the coefficients of the smaller powers of $y$.  This culminates in the proof of our Main Theorem.  As a reference, we provide certain useful $5$-adic valuation tables in Appendix I.

In Section 5, we outline the proof of our fundamental relations, and the algebraic method by which we may verify the 50 initial relations needed for the induction of our Main Lemma.  The proof of the ten fundamental relations is based on the cusp analysis established from the theory of modular functions.  We give these ten fundamental relations in Appendix II.  However, for want of space, we will post the computations of our 50 initial relations (along with our cusp analysis calculations, and some miscellaneous calculations) in an online Mathematica supplement, which can be found at \url{https://www3.risc.jku.at/people/nsmoot/online3.nb}.

The importance of a computational approach to this problem cannot be overstated.  Not only was there a need to calculate certain relations to complete our induction (most of which would prove too tedious to demonstrate by hand); but many of our results which can in principle be proved without any reliance on computation (e.g., the sufficiency of the additional conditions imposed in Definition \ref{3termad}) were only found through many weeks of exhaustive experimentation.  

In our final section we attempt to provide some insight as to why our technique cannot be applied to a family of congruences in which the associated modular curve has genus 1, e.g., the Andrews--Sellers family.

\section{Key Functions}

Hereafter, we denote $q:=e^{2\pi i\tau}$, for $\tau\in\mathbb{H}$.  We begin by defining an important auxiliary function:

\begin{align}
Z &:= Z(\tau) = \frac{\eta(50\tau)}{\eta(2\tau)}.\label{Zdef}
\end{align}  Here $Z(\tau)$ is a modular function over $\Gamma_0(10)$ (See~Section~5).

We will now define our key generating functions, and their behavior under the standard $U_5$ operator.

\subsection{Generating Functions}

Our main generating functions $L_{\alpha}$ for each case of Theorem \ref{Thm12} are defined as follows:
\begin{align}
L_0 &:= 2E_2(2\tau)-E_2(\tau),\label{L0d}\\
L_{2\alpha-1} &:= (q^{10};q^{10})_{\infty} \sum_{n=0}^{\infty}c\left(5^{2\alpha-1}n + \lambda_{2\alpha-1}\right)q^{n+1},\label{Lod}\\
L_{2\alpha} &:=(q^{2};q^{2})_{\infty} \sum_{n=0}^{\infty}c\left(5^{2\alpha}n + \lambda_{2\alpha}\right)q^{n+1},\label{Led}
\end{align} with the $\lambda_{\alpha}$ defined as

\begin{align}
\lambda_{2\alpha-1} &:= \frac{1+7\cdot 5^{2\alpha-1}}{12},\label{lamodef}\\
\lambda_{2\alpha} &:= \frac{1+11\cdot 5^{2\alpha}}{12}\label{lamedef}.
\end{align}  In either case, $\lambda_{\alpha}\in\mathbb{Z}$ are the minimal positive solutions to 

\begin{align*}
12x\equiv 1\pmod{5^{\alpha}}.
\end{align*}  Therefore, one could write $L_{\alpha}$ in the form

\begin{align*}
L_{\alpha} = \Phi_{\alpha}(q)\cdot \sum_{12n\equiv 1\bmod{5^{\alpha}}} c(n)q^{\left\lfloor \frac{n}{5^{\alpha}} \right\rfloor}.
\end{align*}  We now define the standard Hecke $U_5$ operator:

\begin{definition}

Let $f(q) = \sum_{m\ge M}a(m)q^m$.  Then define

\begin{align}
U_5\left(f(q)\right) = \sum_{5m\ge M} a(5m)q^m.\label{defU5}
\end{align}

\end{definition}

We list some of the important properties of $U_5$.  The proofs are straightforward, and can be found in \cite[Chapter 10]{Andrews} and \cite[Chapter 8]{Knopp}.

\begin{lemma}\label{impUprop}

Given two functions 

\begin{align*}
f(q) = \sum_{m\ge M}a(m)q^m,\ g(q) = \sum_{m\ge N}b(m)q^m,
\end{align*} any $\alpha\in\mathbb{C}$, a primitive fifth root of unity $\zeta$, and the convention that $q^{1/5}~=~e^{2\pi i\tau/5}$, we have the following:
\begin{enumerate}
\item $U_5\left(\alpha\cdot f+g\right) = \alpha\cdot U_5\left(f\right) + U_5\left(g\right)$;
\item $U_5\left(f(q^5)g(q)\right) = f(q) U_5\left(g(q)\right)$;
\item $5\cdot U_5\left(f\right) = \sum_{r=0}^4 f\left( \zeta^rq^{1/5} \right)$.
\end{enumerate}
\end{lemma}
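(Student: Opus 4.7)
The plan is to establish each of the three properties directly from the coefficient-extraction definition of $U_5$ given in (\ref{defU5}), working formally with Laurent series in $q$ (or $q^{1/5}$ where needed). Since $U_5$ is defined purely by selecting the coefficients $a(5m)$ from a series $\sum a(m) q^m$, every statement reduces to a manipulation of coefficients that can be verified termwise.

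For property (1), linearity is immediate: write $f = \sum_{m \ge M} a(m) q^m$ and $g = \sum_{m \ge N} b(m) q^m$, so that $\alpha f + g = \sum_{m} (\alpha a(m) + b(m)) q^m$. Applying (\ref{defU5}) gives $\sum_{5m} (\alpha a(5m) + b(5m)) q^m$, which visibly splits as $\alpha\, U_5(f) + U_5(g)$. For property (2), I would expand the product
\begin{align*}
f(q^5) g(q) = \sum_{m,n} a(m) b(n) q^{5m+n},
\end{align*}
so the coefficient of $q^{5k}$ is $\sum_{5m+n=5k} a(m) b(n) = \sum_{m} a(m) b(5(k-m))$, where the second equality uses that $5m+n \equiv 0 \pmod 5$ forces $5 \mid n$. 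Therefore
\begin{align*}
U_5(f(q^5) g(q)) = \sum_k \left( \sum_m a(m) b(5(k-m)) \right) q^k,
\end{align*}
which is the Cauchy product of $f(q) = \sum a(m) q^m$ with $U_5(g(q)) = \sum b(5j) q^j$. This yields $f(q) \cdot U_5(g(q))$ as claimed.

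For property (3), I would use the standard orthogonality of fifth roots of unity. Substituting $q^{1/5}$ for $q$ and then $\zeta^r q^{1/5}$ in $f$ gives $f(\zeta^r q^{1/5}) = \sum_m a(m) \zeta^{rm} q^{m/5}$. Summing over $r = 0, 1, 2, 3, 4$ and exchanging the order of summation, one gets
\begin{align*}
\sum_{r=0}^{4} f(\zeta^r q^{1/5}) = \sum_m a(m) q^{m/5} \sum_{r=0}^{4} \zeta^{rm}.
\end{align*}
The inner sum equals $5$ when $5 \mid m$ and $0$ otherwise, so only the terms with $m = 5k$ survive, leaving $5 \sum_k a(5k) q^k = 5\, U_5(f)$.

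None of these steps presents a genuine obstacle; the only subtle point is keeping track of convergence of the formal series, which is automatic because both $f$ and $g$ are assumed to have coefficient supports bounded below. I would therefore present the three parts as short independent verifications, noting that (1) and (2) are manipulations of ordinary Laurent series in $q$, while (3) requires passing to the extension ring $\mathbb{C}((q^{1/5}))$ and invoking the finite character sum $\sum_{r=0}^4 \zeta^{rm}$. Since the paper explicitly references \cite[Chapter 10]{Andrews} and \cite[Chapter 8]{Knopp} for the details, the exposition can be kept brief.
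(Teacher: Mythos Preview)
Your proposal is correct; each of the three verifications is the standard coefficient-level argument and goes through without issue. The paper itself does not supply a proof of this lemma at all---it simply declares the properties ``straightforward'' and cites \cite[Chapter 10]{Andrews} and \cite[Chapter 8]{Knopp}---so what you have written is precisely the routine argument those references contain, and is entirely appropriate here.
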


The $U_5$ operator provides us with a convenient means of accessing $L_{\alpha+1}$ from $L_{\alpha}$, as the following lemma shows:

\begin{lemma}
For all $\alpha\ge 0$, we have
\begin{align}
L_{2\alpha} &= U_5\left( L_{2\alpha-1} \right),\label{Lote}\\
L_{2\alpha+1} &= U_5\left( Z\cdot L_{2\alpha} \right).\label{Leto}
\end{align}
\end{lemma}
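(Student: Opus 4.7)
The plan is to prove both identities by direct index-chasing, using property~(2) of the $U_5$ operator from Lemma~\ref{impUprop}.

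For the even-index identity (\ref{Lote}), I begin by observing that the factor $(q^{10};q^{10})_{\infty}$ appearing in $L_{2\alpha-1}$ is a power series in $q^{10} = (q^{5})^{2}$, so by Lemma~\ref{impUprop}(2) it passes through $U_5$ and emerges as $(q^{2};q^{2})_{\infty}$. The remaining task is to compute $U_5$ of the series
\[
\sum_{n\ge 0} c\!\left(5^{2\alpha-1}n + \lambda_{2\alpha-1}\right) q^{n+1}.
\]
The definition (\ref{defU5}) of $U_5$ selects the terms with $5 \mid n+1$; writing $n = 5m-1$ and then setting $n' = m-1$, the result becomes $\sum_{n'\ge 0} c\!\left(5^{2\alpha}(n'+1) - 5^{2\alpha-1} + \lambda_{2\alpha-1}\right) q^{n'+1}$. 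The argument of $c$ simplifies to $5^{2\alpha}n' + \lambda_{2\alpha}$ once one verifies the arithmetic identity $\lambda_{2\alpha} - \lambda_{2\alpha-1} = 5^{2\alpha} - 5^{2\alpha-1} = 4\cdot 5^{2\alpha-1}$, which follows immediately from the closed forms (\ref{lamodef})--(\ref{lamedef}).

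For the odd-index identity (\ref{Leto}), I would first rewrite $Z$ in $q$-theoretic form using $\eta(k\tau) = q^{k/24}(q^{k};q^{k})_{\infty}$:
\[
Z = \frac{\eta(50\tau)}{\eta(2\tau)} = q^{2} \cdot \frac{(q^{50};q^{50})_{\infty}}{(q^{2};q^{2})_{\infty}}.
\]
Multiplying by $L_{2\alpha}$ cancels the denominator $(q^{2};q^{2})_{\infty}$, giving
\[
Z\cdot L_{2\alpha} = (q^{50};q^{50})_{\infty} \sum_{n\ge 0} c\!\left(5^{2\alpha}n + \lambda_{2\alpha}\right) q^{n+3}.
\]
The factor $(q^{50};q^{50})_{\infty}$ is a power series in $(q^{5})^{10}$, so by Lemma~\ref{impUprop}(2) it again passes through $U_5$, now yielding $(q^{10};q^{10})_{\infty}$. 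Applying $U_5$ to the remaining series selects powers $q^{n+3}$ with $5 \mid n+3$, and the same kind of reindexing reduces the proof to the arithmetic identity $2\cdot 5^{2\alpha} + \lambda_{2\alpha} = \lambda_{2\alpha+1}$, which is once more a routine consequence of (\ref{lamodef})--(\ref{lamedef}).

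I do not expect any real obstacle; the proof is essentially bookkeeping. The only subtlety worth flagging is the prefactor $q^{2}$ arising from the $q^{k/24}$ normalization of $\eta(k\tau)$: without it, the shift in the argument of $c$ would land on $\lambda_{2\alpha}$ rather than on the required $\lambda_{2\alpha+1}$. This is precisely the role the eta quotient $Z$ plays in bridging the two generating-function shapes (\ref{Lod}) and (\ref{Led}), and it is also what dictates why the odd-to-even and even-to-odd steps use different factors ($1$ versus $Z$).
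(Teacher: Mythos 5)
Your proposal is correct and follows essentially the same route as the paper's proof: pull the eta-product factor through $U_5$ via Lemma \ref{impUprop}(2), reindex the surviving terms, and check that the resulting shift in the argument of $c$ matches $\lambda_{2\alpha}-\lambda_{2\alpha-1}=4\cdot 5^{2\alpha-1}$ (resp.\ $\lambda_{2\alpha+1}-\lambda_{2\alpha}=2\cdot 5^{2\alpha}$). The only cosmetic difference is that you make the prefactor $q^{2}$ in $Z=q^{2}(q^{50};q^{50})_{\infty}/(q^{2};q^{2})_{\infty}$ explicit, which the paper uses silently.
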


\begin{proof}
For any $\alpha\ge 1$,

\begin{align*}
U_5\left( L_{2\alpha-1} \right) &= U_5\left( (q^{10};q^{10})_{\infty} \sum_{n\ge 0} c\left(5^{2\alpha-1}n + \lambda_{2\alpha-1}\right)q^{n+1} \right)\\
&= (q^{2};q^{2})_{\infty}\cdot U_5\left( \sum_{n\ge 1}c\left(5^{2\alpha-1}(n-1) + \lambda_{2\alpha-1}\right)q^{n} \right)\\
&= (q^{2};q^{2})_{\infty}\cdot \sum_{5n\ge 1}c\left(5^{2\alpha-1}(5n-1) + \lambda_{2\alpha-1}\right)q^{n}\\
&= (q^{2};q^{2})_{\infty}\cdot \sum_{n\ge 1}c\left(5^{2\alpha}n-5^{2\alpha-1} + \lambda_{2\alpha-1}\right)q^{n}\\
&= (q^{2};q^{2})_{\infty}\cdot \sum_{n\ge 0}c\left(5^{2\alpha}n+5^{2\alpha}-5^{2\alpha-1} + \lambda_{2\alpha-1}\right)q^{n+1}\\
&= (q^{2};q^{2})_{\infty}\cdot \sum_{n\ge 0}c\left(5^{2\alpha}n+\lambda_{2\alpha}\right)q^{n+1}.
\end{align*}  Similarly,

\begin{align*}
U_5\left( Z\cdot L_{2\alpha} \right) &= U_5\left( q^2\frac{(q^{50};q^{50})_{\infty}}{(q^{2};q^{2})_{\infty}}(q^{2};q^{2})_{\infty} \sum_{n\ge 0} c\left(5^{2\alpha}n + \lambda_{2\alpha}\right)q^{n+1} \right)\\
&= (q^{10};q^{10})_{\infty}\cdot U_5\left( \sum_{n\ge 3}c\left(5^{2\alpha}(n-3) + \lambda_{2\alpha}\right)q^{n} \right)\\
&= (q^{10};q^{10})_{\infty}\cdot \sum_{5n\ge 3}c\left(5^{2\alpha}(5n-3) + \lambda_{2\alpha}\right)q^{n}\\
&= (q^{10};q^{10})_{\infty}\cdot \sum_{n\ge 1}c\left(5^{2\alpha+1}n-3\cdot 5^{2\alpha} + \lambda_{2\alpha}\right)q^{n}\\
&= (q^{10};q^{10})_{\infty}\cdot \sum_{n\ge 0}c\left(5^{2\alpha+1}(n+1)-3\cdot 5^{2\alpha} + \lambda_{2\alpha}\right)q^{n+1}\\
&= (q^{10};q^{10})_{\infty}\cdot \sum_{n\ge 0}c\left(5^{2\alpha+1}n+\lambda_{2\alpha+1}\right)q^{n+1}.
\end{align*}
\end{proof}

\subsection{The Modular Equations}

Our most important functions are the following:

\begin{align}
x = x(\tau) :=& \prod_{m=1}^{\infty}\frac{(1-q^{2m})^5(1-q^{5m})}{(1-q^m)^5(1-q^{10m})},\label{xdef}\\
y = y(\tau) :=& q\prod_{m=1}^{\infty}\frac{(1-q^{2m})(1-q^{10m})^3}{(1-q^m)^3(1-q^{5m})}\label{ydef}.
\end{align}  

Notice that, by the Freshman's Dream,

\begin{align*}
(1-q^m)^5&\equiv 1-q^{5m}\pmod{5},\\
(1-q^{2m})^5&\equiv 1-q^{10m}\pmod{5}.
\end{align*}  This yields

\begin{align}
& \prod_{m=1}^{\infty}\frac{(1-q^{2m})^5(1-q^{5m})}{(1-q^m)^5(1-q^{10m})}\equiv 1\pmod{5}.\label{xtoy1}
\end{align}  It is not difficult to verify that

\begin{align}
\frac{x-1}{5} = q\prod_{m=1}^{\infty}\frac{(1-q^{2m})(1-q^{10m})^3}{(1-q^m)^3(1-q^{5m})},\label{xtoy2}
\end{align} from which $x=1+5y$ follows.

\begin{theorem}
Define
\begin{align*}
a_0(\tau) &= -y - 5\cdot 4\cdot y^2 - 5^2\cdot 6\cdot y^3 - 5^3\cdot 4\cdot y^4 - 5^4\cdot y^5\\
a_1(\tau) &= -5\cdot 3 y - 5\cdot 61\cdot y^2 - 5^2\cdot 93\cdot y^3 - 5^3\cdot 63\cdot y^4 - 5^4\cdot 16\cdot y^5\\
a_2(\tau) &= -5\cdot 17\cdot y - 5^3\cdot 14\cdot y^2 - 5^2\cdot 541\cdot y^3 - 5^3\cdot 372\cdot y^4 - 5^4\cdot 96\cdot y^5\\
a_3(\tau) &= -5\cdot 43\cdot y - 5^2\cdot 179\cdot y^2 - 5^4\cdot 56\cdot y^3 - 5^3\cdot 976\cdot y^4 - 5^4\cdot 256\cdot y^5\\
a_4(\tau) &= -5\cdot 41\cdot y - 5^2\cdot 172\cdot y^2 - 5^3\cdot 272\cdot y^3 - 5^4\cdot 192 y^4 - 5^4\cdot 256\cdot y^5.
\end{align*}  Then we have

\begin{align}
y^5+\sum_{j=0}^4 a_j(5\tau) y^j = 0.\label{modY}
\end{align}
\end{theorem}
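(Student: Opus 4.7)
The approach is the classical one from modular function theory: I would interpret the claimed identity as the vanishing of a modular function on $\Gamma_0(50)$, bound its total pole order at the cusps, and reduce the problem to a finite $q$-series check at $i\infty$. Using the cusp analysis for eta quotients developed in Section 5, $y(\tau)$ is a modular function on $\Gamma_0(10)$, so $y(5\tau)$, and hence every $a_j(5\tau)$, is a modular function on $\Gamma_0(50)$. The left-hand side
\begin{align*}
\Phi(\tau) := y(\tau)^{5} + \sum_{j=0}^{4} a_j(5\tau)\, y(\tau)^{j}
\end{align*}
is therefore a modular function on $\Gamma_0(50)$, holomorphic on $\mathbb{H}$ since $y$ is an eta quotient with no zeros in the upper half plane.

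Next, using the standard Ligozat order-at-cusps formula for eta quotients, I would compute the order of $\Phi$ at each of the twelve cusps of $\Gamma_0(50)$ and sum the resulting pole orders to obtain an explicit upper bound $N$ on the degree of the polar divisor of $\Phi$ on $X_0(50)$. By the valence formula, if the $q$-expansion of $\Phi$ at $i\infty$ vanishes to order at least $N+1$, then $\Phi$ must be identically zero. The argument then reduces to a finite verification: use the eta-product definition (\ref{ydef}) to expand $y(\tau)$ and $y(5\tau)$ as power series in $q$ out to order $N+1$, substitute into the explicit expression for $\Phi$, and check that every coefficient up to $q^{N+1}$ vanishes. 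Because the integer coefficients of the $a_j$ are small, this is well within reach of a computer algebra system, and is naturally handled as part of the cusp analysis already needed elsewhere in the paper.

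I expect the main obstacle to be the cusp-by-cusp bookkeeping in the middle step. With twelve cusps to track, and $\Phi$ assembled from $y(\tau)^{5}$ together with the five mixed summands $a_j(5\tau)\, y(\tau)^{j}$ (each of which is itself a polynomial in $y(5\tau)$ of degree up to $5$), one must carefully combine the orders of $y(\tau)$ and $y(5\tau)$ at each cusp of $\Gamma_0(50)$ to produce a bound $N$ that is sharp enough to make the final $q$-expansion verification practical. The combinatorics at the cusps of intermediate width (those over the divisors $5$ and $10$ of $50$, which contribute the bulk of the twelve cusps) is where I would expect most care to be required; once a sufficiently tight bound $N$ is in hand, the remaining check is routine.
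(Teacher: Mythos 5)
Your proposal is correct: the identity is indeed proved by cusp analysis reducing to a finite $q$-expansion check, and your valence-formula argument (bound the polar divisor of $\Phi$ over the twelve cusps of $\Gamma_0(50)$, then verify vanishing at $i\infty$ beyond that bound) would work. The paper, however, takes a slightly different and lighter route that avoids exactly the bookkeeping you flag as the main obstacle. Instead of estimating the order of the sum $\Phi$ at every cusp, it multiplies by $y(5\tau)^{-25}$ and uses only two Ligozat computations: $y(5\tau)^{-1}\in\mathcal{M}^{\infty}(\Gamma_0(50))$ and $y(5\tau)^{-5}y(\tau)\in\mathcal{M}^{\infty}(\Gamma_0(50))$. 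Since each $a_j$ has no constant term and degree at most $5$ in $y(5\tau)$, while $j\le 4$, every term of $y(5\tau)^{-25}\Phi$ is a monomial $\left(y(5\tau)^{-5}y(\tau)\right)^{j}\cdot\left(y(5\tau)^{-1}\right)^{25-k-5j}$ with nonnegative exponents, so the whole expression visibly lies in $\mathcal{M}^{\infty}(\Gamma_0(50))$; one then checks that its principal part and constant term at the single cusp $\infty$ vanish, and the constancy of holomorphic functions on the compact curve $\mathrm{X}_0(50)$ forces it to be identically zero. What your approach buys is generality (no need to find a multiplier clearing all poles to one cusp); what the paper's buys is that only the expansion at $\infty$ is ever needed and the twelve-cusp order computation for a sum of eta quotients is replaced by two order computations for single eta quotients, which are already available from the surrounding Section 5 analysis.
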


\begin{proof}
Because $y(5\tau)$ is a modular function with only one pole, we may prove this equation using cusp analysis.  See the end of Section 5.
\end{proof}

\begin{theorem}
Define
\begin{align*}
b_0(\tau) &=-x^5\\
b_1(\tau) &=1 + 5 x + 5 x^2 + 5 x^3 + 5 x^4 - 16 x^5\\
b_2(\tau) &=-4 - 5\cdot 3\cdot x + 5\cdot 2\cdot x^2 + 5\cdot 7\cdot x^3 + 5\cdot 12\cdot x^4 - 96 x^5\\
b_3(\tau) &=6 + 5\cdot 3\cdot x - 5\cdot 7 x^2 + 5\cdot 8 x^3 + 5\cdot 48\cdot x^4 - 256 x^5\\
b_4(\tau) &=-4 - 5 x + 5\cdot 4\cdot x^2 - 5\cdot 16\cdot x^3 + 5\cdot 64\cdot x^4 - 256 x^5.
\end{align*}  Then we have

\begin{align}
x^5+\sum_{k=0}^4 b_k(5\tau) x^k = 0.\label{modX}
\end{align}
\end{theorem}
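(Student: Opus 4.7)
The plan is to derive (\ref{modX}) directly from the modular equation (\ref{modY}) by exploiting the identity $x=1+5y$. Since this identity holds with $\tau$ replaced by $5\tau$ as well, we have $y(\tau)=(x(\tau)-1)/5$ and $y(5\tau)=(x(5\tau)-1)/5$. The coefficients $a_j(5\tau)$ are polynomials in $y(5\tau)$, so these two substitutions turn (\ref{modY}) into a rational identity in $x(\tau)$ and $x(5\tau)$, which after clearing denominators becomes a polynomial identity of exactly the shape (\ref{modX}).

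Concretely, I would first substitute the two change-of-variable formulas into (\ref{modY}) and multiply through by $5^5$, producing
\[
(x(\tau)-1)^5 + \sum_{j=0}^{4} 5^{5-j}\, \widetilde{a}_j\bigl(x(5\tau)\bigr)\,(x(\tau)-1)^{j} = 0,
\]
where $\widetilde{a}_j(X) := a_j\bigl((X-1)/5\bigr)$ is, by inspection of the explicit powers of $5$ appearing in the definition of $a_j$, a polynomial in $X$ with integer coefficients. Next I would expand $(x(\tau)-1)^j$ via the binomial theorem and regroup the sum as a polynomial in $x(\tau)$. The leading coefficient is manifestly $1$, so no further rescaling is needed, and I could read off $b_k(x(5\tau))$ as the coefficient of $x(\tau)^k$. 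Finally, I would compare these derived $b_k$ against the stated polynomials $b_0,\ldots,b_4$.

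The main obstacle is purely computational: one must carefully combine the binomial expansions of $(x(\tau)-1)^{j}$ with those of $\widetilde{a}_j$, balancing a considerable number of powers of $5$ and signs in order to confirm that the resulting integer coefficients of the polynomials $b_k$ indeed match the (somewhat elaborate) expressions given in the theorem. This is naturally handled by a computer algebra system. As an alternative, the same cusp-analytic route used for (\ref{modY}) applies verbatim to (\ref{modX}): because $x(5\tau)=1+5y(5\tau)$ shares the pole divisor of $y(5\tau)$, the modular function $x(5\tau)$ has only one pole on the relevant modular curve, and the argument sketched in Section~5 for $y(5\tau)$ produces (\ref{modX}) without reference to (\ref{modY}) at all. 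Either route reduces the claim to a finite and mechanical verification.
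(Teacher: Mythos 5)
Your proposal is correct and follows essentially the same route as the paper, which proves (\ref{modX}) by substituting $y=(x-1)/5$ (equivalently $y(5\tau)=(x(5\tau)-1)/5$) into (\ref{modY}) and simplifying; your alternative cusp-analysis suggestion is just the method the paper already uses to establish (\ref{modY}) itself. One minor slip: the individual $\widetilde{a}_j(X)=a_j\bigl((X-1)/5\bigr)$ need not have integer coefficients (e.g.\ $\widetilde{a}_0(X)=-\tfrac{1}{5}(X-1)X^4$), but this is harmless because only the products $5^{5-j}\,\widetilde{a}_j\bigl(x(5\tau)\bigr)$ appear after clearing denominators, and the final comparison with the stated $b_k$ is a finite mechanical check.
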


\begin{proof}
Simply substitute $y=(x-1)/5$ into (\ref{modY}), and simplify.
\end{proof}  For convenience of notation, in later sections we will define $b_5(\tau) := 1$.

\section{Algebra Structure}

\subsection{Localized Ring}
We will begin to construct the algebra structure needed for our proof, beginning with the peculiar localization property.  Define the multiplicatively closed set
\begin{align}
\mathcal{S} := \left\{ (1+5y)^n : n\in\mathbb{Z}_{n\ge 0} \right\}.\label{Sdef}
\end{align}

We will prove that for every $\alpha\ge 1$, $L_{\alpha}$ is a member of the localization of $\mathbb{Z}[y]$ at ${\mathcal{S}}$, which we will denote by $\mathbb{Z}[y]_{\mathcal{S}}$.  Notice that because $1/x^n = 1/(1+5y)^n$ is an eta quotient with an integer power series expansion in $q=e^{2\pi i\tau}$ for every $n\ge 1$, we can expand every element of the localization into an integer power series in $q$, i.e., $\mathbb{Z}[y]_{\mathcal{S}}\subseteq\mathbb{Z}[[q]]$.

We need to define two general classes of subsets of $\mathbb{Z}[y]_{\mathcal{S}}$.  But first, we need a key definition:

\begin{definition}
Let $n\ge 1$.  A function $s:\mathbb{Z}\rightarrow\mathbb{Z}$ is discrete if $s(m)=0$ for sufficiently large $m$.  A function $h:\mathbb{Z}^{n}\rightarrow\mathbb{Z}$ is a discrete array if for any fixed $(m_1,m_2,...,m_{n-1})\in\mathbb{Z}^{n-1}$, $h(m_1,m_2,...,m_{n-1},m)$ is discrete with respect to $m$.
\end{definition}

We now need to construct suitable sets to contain our $L_{\alpha}$.  Due to a somewhat irregular pattern of 5-adic growth, we must define our 5-adic valuation function very carefully.

\begin{align*}
\theta(m) := \begin{cases}
\left\lfloor \frac{5m-5}{6} \right\rfloor,\ &1\le m\le 2,\\
\left\lfloor \frac{5m-5}{6} \right\rfloor - 1,\ &m\ge 3,
\end{cases}
\end{align*}

\begin{align*}
\phi(m) := \begin{cases}
\left\lfloor \frac{5m-5}{6} \right\rfloor,\ &1\le m\le 3,\\
\left\lfloor \frac{5m-5}{6} \right\rfloor - 1,\ &m\ge 4.
\end{cases}
\end{align*}

Now we take an arbitrary $n\ge 1$, and define the following:

\begin{align}
\mathcal{V}_n^{(0)}:=& \left\{ \frac{1}{(1+5y)^n}\sum_{m\ge 1} s(m)\cdot 5^{\theta(m)}\cdot y^m : s \text{ is discrete} \right\},\label{V0def}\\
\mathcal{V}_n^{(1)}:=& \left\{ \frac{1}{(1+5y)^n}\sum_{m\ge 1} s(m)\cdot 5^{\phi(m)}\cdot y^m : s \text{ is discrete} \right\}.\label{V1def}
\end{align}

\subsection{Recurrence Relation}

We now define the following maps,
\begin{align}
U^{(1-i)}\left( f \right) := \frac{U_5\left( F\cdot Z^i \cdot f \right)}{F}.\label{Uidef}
\end{align} for $i=0,1$.

Now we are ready to utilize our modular equations, together with our $U^{(i)}$ operators to build certain helpful recurrence relations.

\begin{lemma}
For all $m,n\in\mathbb{Z}$, and $i\in\{0,1\}$, we have
\begin{align}
U^{(i)}\left( \frac{y^m}{(1+5y)^n} \right) = -\frac{1}{(1+5y)^5}\sum_{j=0}^{4}\sum_{k=1}^{5} a_j(\tau)b_k(\tau)\cdot U^{(i)}\left( \frac{y^{m+j-5}}{(1+5y)^{n-k}} \right).\label{UmodX}
\end{align}
\end{lemma}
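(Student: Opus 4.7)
The plan is to derive (\ref{UmodX}) from the two modular equations (\ref{modY}) and (\ref{modX}) together with a factoring property of $U^{(i)}$. The factoring property reads
\begin{align*}
U^{(i)}\!\left(h(5\tau)\cdot g\right) = h(\tau)\cdot U^{(i)}(g),
\end{align*}
for any modular function $h$; it follows immediately from part (2) of Lemma~\ref{impUprop} and the definition (\ref{Uidef}) of $U^{(i)}$, since $h(5\tau)$ has its Fourier expansion in $q^5$ and can therefore be factored out of the inner $U_5$. This, together with linearity, is what will allow the coefficients $a_j(\tau), b_k(\tau)$ appearing in (\ref{UmodX}) to move freely in and out of $U^{(i)}$, at the cost of shifting their arguments between $\tau$ and $5\tau$.

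From (\ref{modY}) we have $-y^5 = \sum_{j=0}^{4}a_j(5\tau)\,y^j$, and from (\ref{modX}) together with the convention $b_5(\tau)=1$ we have $-b_0(5\tau) = \sum_{k=1}^{5}b_k(5\tau)\,x^k$. Multiplying these two identities and then by $y^{m-5}/x^n$ should produce the purely algebraic identity
\begin{align*}
b_0(5\tau)\cdot\frac{y^m}{x^n} = \sum_{j=0}^{4}\sum_{k=1}^{5}a_j(5\tau)\,b_k(5\tau)\cdot\frac{y^{m+j-5}}{x^{n-k}}.
\end{align*}

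The final step is to apply $U^{(i)}$ to both sides, invoke linearity, and use the factoring rule termwise; this turns the displayed identity into one between expressions of the form appearing in (\ref{UmodX}). Since $b_0(\tau) = -x^5 = -(1+5y)^5$ by the $k=0$ entry in the definition preceding (\ref{modX}), the left-hand side collapses to $-(1+5y)^5\cdot U^{(i)}\!\left(y^m/(1+5y)^n\right)$, and dividing through by $-(1+5y)^5$ yields (\ref{UmodX}) exactly.

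I do not anticipate any real obstacle here: the proof is essentially a single algebraic manipulation, and no genuinely hard step is involved once the factoring rule for $U^{(i)}$ is observed. The only care required is the bookkeeping of which $a_j, b_k$ are evaluated at $\tau$ versus at $5\tau$, and using the range $k = 1,\ldots,5$ together with the convention $b_5(\tau)=1$ consistently throughout.
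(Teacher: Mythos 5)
Your proposal is correct and follows essentially the same route as the paper: combine the two degree-5 modular equations (\ref{modY}) and (\ref{modX}) into a single Laurent identity in $y$ and $x=1+5y$ with coefficients evaluated at $5\tau$, then apply $U^{(i)}$ and pull those coefficients out via part (2) of Lemma \ref{impUprop}, using $b_0(\tau)=-(1+5y)^5$ to produce the prefactor. Your multiplication of the two modular equations at once is only a cosmetic rearrangement of the paper's sequential substitution, so there is nothing substantive to add.
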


\begin{proof}
We can write 

\begin{align}
b_0(5\tau) &= -\sum_{k=1}^5 b_k(5\tau) x^k,\nonumber \\
1 &=-\frac{1}{b_0(5\tau)} \sum_{k=1}^5 b_k(5\tau) x^k,\nonumber\\
x^{-n} &=-\frac{1}{b_0(5\tau)} \sum_{k=1}^5 b_k(5\tau) x^{-(n-k)},\label{xnegn1}
\end{align} for $n\ge 1$.  Writing $x$ in terms of $y$, we have

\begin{align}
(1+5y)^{-n} &=-\frac{1}{b_0(5\tau)} \sum_{k=1}^5 b_k(5\tau) (1+5y)^{-(n-k)}.\label{xnegn2}
\end{align}  If we multiply both sides by $y^m$ for some $m\ge 1$, then

\begin{align}
\frac{y^m}{(1+5y)^n} &= -\frac{1}{b_0(5\tau)}\sum_{k=1}^{5} b_k(5\tau)\cdot \frac{y^{m}}{(1+5y)^{n-k}}\nonumber\\
&= -\frac{1}{(1+5y(5\tau))^5}\sum_{k=1}^{5} b_k(5\tau)\cdot \frac{y^{m}}{(1+5y)^{n-k}}.\label{xnegn3}
\end{align}  Now we expand each power of $y$ with its modular equation, and rearrange:

\begin{align}
\frac{y^m}{(1+5y)^n} &= -\frac{1}{b_0(5\tau)}\sum_{k=1}^{5} b_k(5\tau)\cdot\sum_{j=0}^4 a_j(5\tau) \frac{y^{m+j-5}}{(1+5y)^{n-k}}\nonumber\\
&= -\frac{1}{(1+5y(5\tau))^5}\sum_{j=0}^4\sum_{k=1}^{5}a_j(5\tau) b_k(5\tau)\cdot \frac{y^{m+j-5}}{(1+5y)^{n-k}}.\label{xnegn4}
\end{align}  Now multiply both sides by $F\cdot Z^{1-i}$:

\begin{align}
F\cdot Z^{1-i}\cdot\frac{y^m}{(1+5y)^n} &= -\frac{1}{b_0(5\tau)}\sum_{k=1}^{5} b_k(5\tau)\cdot\sum_{j=0}^4 a_j(5\tau) \frac{y^{m+j-5}}{(1+5y)^{n-k}}\nonumber\\
= -\frac{1}{(1+5y(5\tau))^5}&\sum_{j=0}^4\sum_{k=1}^{5}a_j(5\tau) b_k(5\tau)\cdot F\cdot Z^{1-i}\cdot \frac{y^{m+j-5}}{(1+5y)^{n-k}}.\label{xnegn5}
\end{align}

We are now ready to take the $U_5$ operator.  Recall that by Line 2 of Lemma \ref{impUprop}, for any functions $f(\tau),g(\tau)$,

\begin{align*}
U_5(f(5\tau)\cdot g(\tau)) = f(\tau)\cdot U_5(g(\tau)).
\end{align*} This gives us

\begin{align}
U_5&\left(F\cdot Z^{1-i}\cdot \frac{y^m}{(1+5y)^n} \right)\nonumber\\ =& -\frac{1}{(1+5y)^5}\sum_{j=0}^{4}\sum_{k=1}^{5} a_j(\tau)b_k(\tau)\cdot U_5\left(F\cdot Z^{1-i}\cdot \frac{y^{m+j-5}}{(1+5y)^{n-k}} \right).\label{xnegn6}
\end{align}  Dividing both sides by $F$, we achieve our formula.

\end{proof}

\subsection{Main Lemma}

We need to provide certain general relations for $U^{(i)}\left( \frac{y^m}{(1+5y)^n} \right)$.  For this we will define the following:

\begin{align*}
\pi_1(m,r) &:= \begin{cases}
0,\ &1\le m\le 2, \text{ and } r=1,\\
3,\ &1\le m\le 2, \text{ and } r=3,\\
\left\lfloor \frac{5r+1}{6} \right\rfloor,\ &1\le m\le 2, \text{ and } r\ge 2, \text{ and } r\neq 3,\\
2,\ &m=3, \text{ and } r=2,\\
\left\lfloor \frac{5r-2}{6} \right\rfloor,\ &m=3, \text{ and } r\neq 2,\\
\left\lfloor \frac{5r-m+1}{6} \right\rfloor,\ &m\ge 4;
\end{cases}\\
\pi_0(m,r) &:= \begin{cases}
\left\lfloor \frac{5r+1}{6} \right\rfloor,\ &m=1,\\
\left\lfloor \frac{5r+1}{6} \right\rfloor,\ &m = 2, \text{ and } r\neq 3, 4, 5,\\
\left\lfloor \frac{5r-5}{6} \right\rfloor,\ &m = 2, \text{ and } 3\le r \le 5,\\
\left\lfloor \frac{5r-m-2}{6} \right\rfloor,\ &m\ge 3,
\end{cases}
\end{align*}\\

\begin{theorem}\label{thmuyox}
There exist discrete arrays $h_1, h_0 : \mathbb{Z}^{3}\rightarrow\mathbb{Z}$ such that\\
\begin{align}
U^{(1)}\left( \frac{y^m}{(1+5y)^n} \right) &= \frac{1}{(1+5 y)^{5n-4}}\sum_{r\ge \left\lceil m/5\right\rceil} h_1(m,n,r)\cdot 5^{\pi_1(m,r)}\cdot y^r\label{U1YoX},\\
U^{(0)}\left( \frac{y^m}{(1+5y)^n} \right) &= \frac{1}{(1+5 y)^{5n-2}}\sum_{r\ge \left\lceil (m+2)/5\right\rceil} h_0(m,n,r)\cdot 5^{\pi_0(m,r)}\cdot y^r\label{U0YoX}.
\end{align}
\end{theorem}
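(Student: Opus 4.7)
The plan is to prove both identities by strong induction on $m \ge 1$, keeping $n \in \mathbb{Z}$ free. The workhorse is the recurrence (\ref{UmodX}), which expresses $U^{(i)}(y^m/(1+5y)^n)$ as a $\mathbb{Z}[y,x]$-linear combination of the $U^{(i)}(y^{m+j-5}/(1+5y)^{n-k})$ for $0 \le j \le 4$, $1 \le k \le 5$; since $m + j - 5 \le m - 1 < m$, each summand on the right falls under the inductive hypothesis once the appropriate base cases are in hand. Base cases at small $m$ rest on the ten fundamental relations of Appendix II, which give $U^{(i)}(y^k)$ for $k \in \{0,1,2,3,4\}$, $i \in \{0,1\}$; extending to arbitrary $n \in \mathbb{Z}$ for these small $m$ is a secondary induction on $n$, reducing $n \ge 1$ to $n-1$ via (\ref{UmodX}) (whose inner factors then involve $(1+5y)^{k-n}$ as polynomial multipliers when $k \ge n$), and collapsing $n \le 0$ directly to linear combinations of $U^{(i)}(y^p)$ by expanding $(1+5y)^{|n|}$.

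The structural (denominator and leading-$y$) parts of the inductive step are quick. By the inductive hypothesis each inner factor contributes denominator $(1+5y)^{5(n-k) - c_i}$, with $c_1 = 4$ and $c_0 = 2$; combined with the prefactor $1/(1+5y)^5$ and with the fact that each $b_k(\tau)$ is a polynomial in $x = 1+5y$, the worst possible denominator exponent is $5n - c_i$, achieved at $k = 1$ through the nonzero constant-in-$x$ term of $b_1$. The lower bound $r \ge \lceil m/5 \rceil$ (resp. $r \ge \lceil (m+2)/5 \rceil$) on the surviving $y$-powers propagates because every $a_j(\tau)$ vanishes to order at least $1$ at $\infty$: only the $j = 0$ summand can contribute the minimal $y$-power, via $a_0 = -y + O(y^2)$ multiplying the leading term of the inductive expansion of $U^{(i)}(y^{m-5}/(1+5y)^{n-1})$, and the remaining terms produce $y$-powers that are strictly larger.

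The deep content is the $5$-adic valuation inequality. Inserting the inductive formulas into (\ref{UmodX}) and expanding in $y$, each coefficient of $y^r$ in the cleared numerator becomes a finite sum of products of the form $5^{\nu_5(a_{j,s}) + \nu_5(b_{k,t}) + \pi_i(m+j-5,\, r')}$, and one must show that each such product is divisible by $5^{\pi_i(m,r)}$. This is carried out by case analysis, branch-by-branch against the piecewise definitions of $\pi_1, \pi_0$ and against the explicit valuations of the $a_{j,s}$ and $b_{k,t}$ tabulated in Appendix I; the integer coefficients remaining after extracting $5^{\pi_i(m,r)}$ constitute the discrete arrays $h_1, h_0$. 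I expect this $5$-adic bookkeeping to be the principal obstacle: the exceptional values in $\pi_i$ (the spike $\pi_1(m,3) = 3$ in the strip $1 \le m \le 2$; the drop $\pi_0(2,r) = \lfloor (5r-5)/6 \rfloor$ for $3 \le r \le 5$) are tight, and they must be reconciled against individual $5$-adic anomalies in the $a_j$-tables (most visibly $\nu_5(a_{2,2}) = 3$, higher than the dominant pattern predicts) and in the $b_k$-tables, making the branch-by-branch verification exactly where the valuation tables of Appendix I and computer-assisted checks become indispensable.
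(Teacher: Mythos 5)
Your proposal runs on the same engine as the paper (the recurrence (\ref{UmodX}), base data reduced to the ten relations of Appendix II, and $5$-adic bookkeeping of the coefficients), but the induction architecture breaks down at exactly the delicate place. Your secondary induction ``in $n$ at small $m$ via (\ref{UmodX})'' cannot work as described: (\ref{UmodX}) lowers the $y$-exponent by $1$ through $5$, so for $1\le m\le 5$ it produces terms $U^{(i)}\bigl(y^{m+j-5}/(1+5y)^{n-k}\bigr)$ with $m+j-5\le 0$; these are covered neither by your inductive hypothesis (which starts at $m\ge 1$) nor by the ten base relations, and for strictly negative powers of $y$ (note $1/y\in\mathcal{M}^{\infty}(\Gamma_0(10))$) the images under $U^{(i)}$ do not in general admit expansions in nonnegative powers of $y$ over a power of $1+5y$, so the induction does not close. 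The paper's Lemma \ref{lemmaN} exists precisely to avoid this: the $n$-direction is handled by a \emph{different} recurrence, coming from the modular equation for $x=1+5y$ alone (only the $b_k$'s), which keeps the exponent $m_0$ fixed. This $m$-preserving recursion is also the only route to the exceptional values of $\pi_i$ at small $m$ (e.g.\ $\pi_1(m,3)=3$ for $m\le 2$): through the $m$-mixing recurrence (\ref{UmodX}) the inputs at $m'=m+j-5$ only satisfy the generic floor bounds (\ref{pimd1})--(\ref{pimd0}), so no branch-by-branch check of products of valuations can ever recover an exponent strictly above the floor; and the same fixed-$m$ recursion (through the unit constant term of its $k=5$ term) is what later yields the congruences of Corollary \ref{congcor1}, on which Theorem \ref{w12w1} depends. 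This is why the paper splits the argument into Lemma \ref{lemmaMN} (generic exponent $\lfloor (5r-m+\mu)/6\rfloor$, joint $(m,n)$-induction from the $5\times 5$ base block) and Lemma \ref{lemmaN} (exact $\pi_i$ at fixed small $m$, induction in $n$ only).

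Keeping $n\in\mathbb{Z}$ free is also untenable: for $n\le 0$ the asserted identity is simply false --- at $(m,n)=(1,0)$, $i=1$, the left side is $U^{(1)}(y)=y$, while the right side would be $(1+5y)^{4}$ times an integer polynomial in $y$. So your main induction on $m$, which for $1\le n\le 5$ reduces to indices $n-k\le 0$, would need a separately formulated and proved substitute statement in that range; the paper instead confines all indices to $m,n\ge 1$ by taking the computational base block $1\le m,n\le 5$, so that both recursions stay inside the admissible region. A smaller correction: the valuation data you invoke is not what Appendix I contains (those tables give $\theta(m)+\pi_1(m,r)+\pi_0(r,w)-2$ for Theorem \ref{w12w1}); the fact actually needed for the bookkeeping is the uniform estimate that the $y^l$-coefficient of $-a_j(\tau)b_k(\tau)(1+5y)^{5(k-1)}$ is divisible by $5^{\lfloor (5l+j)/6\rfloor}$, combined with $\lfloor M/6\rfloor+\lfloor N/6\rfloor\ge\lfloor (M+N-5)/6\rfloor$, which is how the paper carries out the inductive valuation step.
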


Notice that

\begin{align}
\pi_1(m,r)\ge&\left\lfloor \frac{5r-m+1}{6} \right\rfloor,\label{pimd1}\\
\pi_0(m,r)\ge&\left\lfloor \frac{5r-m-2}{6} \right\rfloor\label{pimd0}.
\end{align}  We will therefore begin by proving the following:

\begin{lemma}\label{lemmaMN}
Let $\kappa,\delta,\mu\in\mathbb{Z}_{\ge 0}$ be fixed, and fix $i$ to either 0 or 1.  If there exists a discrete array $h_i$ such that
\begin{align}
U^{(i)}\left( \frac{y^{m}}{(1+5y)^{n}} \right) &= \frac{1}{(1+5y)^{5n-\kappa}}\sum_{r\ge \left\lceil \frac{m+\delta}{5} \right\rceil} h_i(m,n,r)\cdot 5^{\left\lfloor \frac{5r-m+\mu}{6} \right\rfloor}\cdot y^{r}
\end{align} for $1\le m\le 5$, $1\le n\le 5$, then such a relation can be made to hold for all $m\ge 1$, $n\ge 1$.
\end{lemma}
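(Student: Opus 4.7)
The plan is to perform a two-stage induction---first in $m$, then in $n$---using two specialised single-index recurrences distilled from the modular equations. The full recurrence (\ref{UmodX}) is unsuitable for this purpose because it shifts both indices simultaneously, which would force us to consider terms with $m'\le 0$ outside the range $m\ge 1$ claimed by the lemma. Instead, from (\ref{modY}), multiplying $y^5=-\sum_{j=0}^{4}a_j(5\tau)y^j$ by $y^{m-5}/(1+5y)^n$ and then by $F\cdot Z^{1-i}$, and applying $U_5$ together with property~(2) of Lemma~\ref{impUprop}, we obtain the pure-$m$ recurrence
\begin{align*}
U^{(i)}\!\left(\frac{y^{m}}{(1+5y)^{n}}\right)=-\sum_{j=0}^{4}a_{j}(\tau)\,U^{(i)}\!\left(\frac{y^{m-5+j}}{(1+5y)^{n}}\right)\qquad(m\ge 5).
\end{align*}
From (\ref{modX}) rewritten via (\ref{xnegn1}) together with $b_0(5\tau)=-x(5\tau)^5$, an analogous derivation yields the pure-$n$ recurrence
\begin{align*}
U^{(i)}\!\left(\frac{y^{m}}{(1+5y)^{n}}\right)=\frac{1}{(1+5y)^{5}}\sum_{k=1}^{5}b_{k}(\tau)\,U^{(i)}\!\left(\frac{y^{m}}{(1+5y)^{n-k}}\right)\qquad(n\ge 1).
\end{align*}
For $m\ge 6$ the first recurrence produces only terms with $m$-index in $\{1,\dots,m-1\}$, and for $n\ge 6$ the second produces only terms with $n$-index in $\{1,\dots,n-1\}$, so neither ever leaves the region $m'\ge 1,\ n'\ge 1$.

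The induction then proceeds as follows. In Stage~1, for each fixed $n\in\{1,\dots,5\}$, I induct on $m$: the base case $1\le m\le 5$ is the hypothesis, and for $m\ge 6$ the pure-$m$ recurrence expresses $U^{(i)}(y^m/(1+5y)^n)$ as a $\mathbb{Z}[y]$-linear combination of terms $U^{(i)}(y^{m-5+j}/(1+5y)^n)$ that are already known by the inductive hypothesis, from which $h_i(m,n,\cdot)$ is defined. In Stage~2, for each fixed $m\ge 1$ (covered at $n\le 5$ by Stage~1), I induct on $n$: for $n\ge 6$ apply the pure-$n$ recurrence and expand each $U^{(i)}(y^m/(1+5y)^{n-k})$ by the inductive hypothesis. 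In both stages the discrete array $h_i$ is defined recursively from the explicit integer coefficients appearing in the $a_j$ and $b_k$.

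The main obstacle will be the 5-adic bookkeeping: at each inductive step one must verify that every coefficient of $y^r$ retains the factor $5^{\lfloor(5r-m+\mu)/6\rfloor}$ and that the cumulative denominator reassembles as exactly $(1+5y)^{5n-\kappa}$. In Stage~2, combining the inductive denominator $(1+5y)^{-(5(n-k)-\kappa)}$ with the overall prefactor $(1+5y)^{-5}$ yields $(1+5y)^{-(5n-\kappa)}$ only after absorbing a compensating numerator $(1+5y)^{5k-5}$; this must be multiplied against $b_k(\tau)$ (itself a polynomial in $x=1+5y$) and the inner sum, and one must check that the 5-adic valuation of the coefficient of $y^p$ in $b_k(\tau)(1+5y)^{5k-5}$ is at least $\lceil 5p/6\rceil$, which combined with the elementary inequality $\lfloor a/6\rfloor-\lfloor(a-5p)/6\rfloor\le\lceil 5p/6\rceil$ supplies the target bound on the coefficient of $y^r$. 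In Stage~1 an analogous analysis is needed using the tabulated 5-adic weights of the coefficients of $a_0,\dots,a_4$, noting that $a_0$ has 5-adic weight $0$ on its leading term while $a_1,\dots,a_4$ each carry an extra factor of $5$ that compensates for the $j$-dependent shift in the floor-function argument. This term-by-term bookkeeping is the computational content of the proof; the strategic skeleton is the two-stage induction just sketched.
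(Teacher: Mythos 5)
Your plan is sound, but it organizes the induction differently from the paper, and the comparison is worth making. The paper proves Lemma \ref{lemmaMN} by a single simultaneous induction on the pair $(m_0,n_0)$ with $m_0,n_0\ge 6$, using the doubly-shifting recurrence (\ref{UmodX}) and the expansion $w(j,k)$ of $-a_j(\tau)b_k(\tau)(1+5y)^{5(k-1)}$ with coefficient valuations $\lfloor (5l+j)/6\rfloor$, the $5$-adic bookkeeping being handled by the inequality $\lfloor M/6\rfloor+\lfloor N/6\rfloor\ge\lfloor (M+N-5)/6\rfloor$; the pure-$n$ recurrence you distill from (\ref{modX}) is precisely the engine of the companion Lemma \ref{lemmaN}, but it is not invoked inside the paper's proof of Lemma \ref{lemmaMN} itself, and no pure-$m$ recurrence appears there at all. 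Your two-stage scheme (pure-$m$ recurrence from (\ref{modY}) at fixed $n\le 5$, then pure-$n$ recurrence at each fixed $m$) buys a cleaner logical coverage: the paper's induction as literally stated only treats $m_0,n_0\ge 6$, and a pair with, say, $m_0\ge 6$ but $n_0\le 5$ would push the combined recurrence to indices $n_0-k\le 0$ outside the hypothesis, a boundary strip your decomposition never enters. The price is two separate families of finite valuation checks: that the coefficient of $y^p$ in $a_j(\tau)$ has $5$-adic valuation at least $\lceil (5p+j-5)/6\rceil$, and that the coefficient of $y^p$ in $b_k(\tau)(1+5y)^{5(k-1)}$ has valuation at least $\lceil 5p/6\rceil$; both hold (they are weaker than the expansions the paper records for $w(j,k)$ and $\hat w(k)$), and your elementary bound $\lfloor a/6\rfloor-\lfloor (a-5p)/6\rfloor\le\lceil 5p/6\rceil$ is a valid substitute for the paper's floor inequality. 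One small item to add to the write-up: verify at each stage that the lower summation bound $r\ge\lceil (m+\delta)/5\rceil$ is preserved (in Stage 1 this uses that every $a_j$ has no constant term, so each step raises the power of $y$ by at least one), a check the paper performs explicitly.
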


\begin{lemma}\label{lemmaN}
Let $\kappa,\delta\in\mathbb{Z}_{\ge 0}$ and $m_0\in\mathbb{Z}_{\ge 1}$ be fixed, and fix $i$ to either 0 or 1.  If there exists a discrete array $h_i$ such that
\begin{align}
U^{(i)}\left( \frac{y^{m_0}}{(1+5y)^{n}} \right) &= \frac{1}{(1+5y)^{5n-\kappa}}\sum_{r\ge \left\lceil \frac{m+\delta}{5} \right\rceil} h_i(m,n,r)\cdot 5^{\pi_i(m_0,r)}\cdot y^{r}
\end{align} for $1\le n\le 5$, then such a relation can be made to hold for all $n\ge 1$.
\end{lemma}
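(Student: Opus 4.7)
The plan is to induct on $n$, using the hypothesis to cover the base range $1 \le n \le 5$. The principal tool is a recurrence in $n$ that leaves the numerator $y^{m_0}$ untouched, derived from the modular equation (\ref{modX}) for $x$ \emph{alone} (without invoking (\ref{modY}), which would shift the exponent $m_0$ and force us back into the generic setting of Lemma \ref{lemmaMN}). Specifically, starting from $b_{0}(5\tau) = -\sum_{k=1}^{5} b_{k}(5\tau)\, x^{k}$ and using $b_{0}(5\tau) = -(1+5y(5\tau))^{5}$, I would divide through by $b_{0}(5\tau)\, x^{n}$, multiply by $y^{m_0}$, then multiply by $F\cdot Z^{1-i}$, apply $U_{5}$, invoke Lemma \ref{impUprop}(2) to pull out $(1+5y(5\tau))^{-5}$ and each $b_{k}(5\tau)$, and finally divide by $F$, arriving at the $m_0$-fixed recurrence
\begin{align*}
U^{(i)}\!\left( \frac{y^{m_0}}{(1+5y)^{n}} \right) = \frac{1}{(1+5y)^{5}} \sum_{k=1}^{5} b_{k}(\tau)\, U^{(i)}\!\left( \frac{y^{m_0}}{(1+5y)^{n-k}} \right).
\end{align*}
For $n \ge 6$ every $n-k$ appearing on the right lies in $\{1,\dots,n-1\}$, so the inductive hypothesis applies to each summand.

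Substituting those inductive expressions and clearing to the common denominator $(1+5y)^{5n-\kappa}$ multiplies each $b_{k}(\tau)$ by the factor $(1+5y)^{5k-5}$, and the resulting numerator is a $\mathbb{Z}[y]$-combination of the inductive numerators. Discreteness of $h_{i}(m_{0}, n, \cdot)$ in its last argument and the lower bound $r \ge \lceil (m_{0}+\delta)/5 \rceil$ are then immediate, since $b_{k}(\tau)(1+5y)^{5k-5}$ is a polynomial in $y$ with no negative powers. The one nontrivial point is the 5-adic bookkeeping: I must verify that the $y^{r}$-coefficient of the assembled numerator is still divisible by $5^{\pi_{i}(m_{0}, r)}$.

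This is where I expect the main obstacle to lie. A monomial $c_{k, l}\, y^{l}$ coming from $b_{k}(\tau)(1+5y)^{5k-5}$, when multiplied against an inductive term $5^{\pi_{i}(m_{0}, r-l)} y^{r-l}$, contributes 5-adic valuation at least $v_{5}(c_{k, l}) + \pi_{i}(m_{0}, r-l)$; since $\pi_{i}(m_{0}, r+1) - \pi_{i}(m_{0}, r) \in \{0, 1\}$ for all large $r$, the required divisibility reduces to the finite inequality $v_{5}(c_{k, l}) \ge \pi_{i}(m_{0}, r) - \pi_{i}(m_{0}, r-l)$ for each relevant $(k, l)$. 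The explicit 5-adic depressions in the $b_{k}(\tau)$ displayed in Section 2.2, together with the additional factor of $5^{l}$ supplied to the $y^{l}$-coefficient by the binomial expansion of $(1+5y)^{5k-5}$, should collectively dominate the drift of $\pi_{i}$, and the verification becomes a finite tabulation against the 5-adic valuation tables in Appendix I. The finitely many exceptional small-$r$ values in the piecewise definition of $\pi_{i}$ (for example $r \in \{1, 3\}$ when $m_{0} \in \{1, 2\}$, $r = 2$ when $m_{0} = 3$, and the $r \in \{3, 4, 5\}$ depression when $(m_{0}, i) = (2, 0)$) cannot be captured by the asymptotic bound and must be checked by direct finite inspection, but since only finitely many $(m_{0}, r)$ pairs are involved this poses no conceptual obstruction. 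Once the 5-adic inequality is confirmed in all cases, the inductive step closes and the lemma follows.
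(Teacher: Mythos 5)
Your proposal is correct and follows essentially the same route as the paper: the same $m_0$-preserving recurrence obtained from the modular equation for $x$ alone, induction on $n$ after the base cases $1\le n\le 5$, and the same 5-adic bookkeeping on the coefficients of $b_k(\tau)(1+5y)^{5(k-1)}$, with the $k=5$, $l=0$ term reproducing $U^{(i)}\bigl(y^{m_0}/(1+5y)^{n-5}\bigr)$. The paper merely packages your finite tabulation as the uniform valuation bound $\left\lfloor \frac{5l+10}{6} \right\rfloor$ for these coefficients together with the inequality $\pi_i(m_0,r-l)+\left\lfloor \frac{5l+10}{6} \right\rfloor\ge\pi_i(m_0,r)$.
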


We may verify Theorem Lemma \ref{lemmaN} for $1\le m_0\le 4$, since for any larger $m$, (\ref{pimd1}), (\ref{pimd0}) are equalities.  If Lemma \ref{lemmaMN} is also satisfied, then Theorem \ref{thmuyox} follows.

\begin{proof}[Proof of Lemma \ref{lemmaMN}]

We will use induction.  Suppose that the relation holds for all positive integers strictly less than some $m_0,n_0\in\mathbb{Z}_{\ge 6}$.  We want to show that the relation can be made to hold for $m_0$ and $n_0$.  We have
\begin{align}
U^{(i)}\left( \frac{y^{m_0}}{(1+5y)^{n_0}} \right)&\nonumber\\ = -\frac{1}{(1+5y)^5}&\sum_{j=0}^{4}\sum_{k=1}^{5} a_j(\tau)b_k(\tau)\cdot U^{(i)}\left( \frac{y^{m_0+j-5}}{(1+5y)^{n_0-k}} \right)\\
= -\frac{1}{(1+5y)^5}\sum_{j=0}^{4}&\sum_{k=1}^{5} \frac{a_j(\tau)b_k(\tau)}{(1+5y)^{5(n_0-k)-\kappa}}\nonumber\\ \times\sum_{r\ge \left\lceil (m_0+j-5 + \delta)/5 \right\rceil}& h_i(m_0+j-5,n_0-k,r)\cdot 5^{\left\lfloor \frac{5r-(m_0+j-5)+\mu}{6} \right\rfloor}\cdot y^r\\
=\frac{1}{(1+5y)^{5n_0-\kappa}}&\sum_{j=0}^{4}\sum_{k=1}^{5} w(j,k)\nonumber\\ \times\sum_{r\ge \left\lceil (m_0+j-5+\delta)/5 \right\rceil} &h_i(m_0+j-5,n_0-k,r)\cdot 5^{\left\lfloor \frac{5r-(m_0+j-5)+\mu}{6} \right\rfloor}\cdot y^r,
\end{align} with

\begin{align}
w(j,k) :=& -a_j(\tau)b_k(\tau)(1+5y)^{5(k-1)}\nonumber\\
=& \sum_{l=1}^{25} v(j,k,l)\cdot 5^{\left\lfloor \frac{5l+j}{6} \right\rfloor}\cdot y^l.
\end{align}  This can be demonstrated by a simple expansion of $a_j(\tau)b_k(\tau)(1+5y)^{5(k-1)}$.  Expanding $w(j,k)$, we have

\begin{align}
&U^{(i)}\left( \frac{y^{m_0}}{(1+5y)^{n_0}} \right)\nonumber\\ =& \frac{1}{(1+5y)^{5n_0-\kappa}}\sum_{j=0}^{4}\sum_{k=1}^{5}\sum_{l=1}^{25} \sum_{r\ge \left\lceil (m_0+j-5+\delta)/5 \right\rceil}\nonumber\\ &v(j,k)\cdot h_i(m_0+j-5,n_0-k,r)\cdot 5^{\left\lfloor \frac{5r-(m_0+j-5)+\mu}{6} \right\rfloor + \left\lfloor \frac{5l+j}{6} \right\rfloor}\cdot y^{r+l}.\label{mnmodeqnrpl}
\end{align}  Notice that for any $M,N\in\mathbb{Z}$,

\begin{align*}
\left\lfloor \frac{M}{6} \right\rfloor + \left\lfloor \frac{N}{6} \right\rfloor \ge \left\lfloor \frac{M+N-5}{6} \right\rfloor.
\end{align*}  Because of this,

\begin{align}
\left\lfloor \frac{5r-(m_0+j-5)+\mu}{6} \right\rfloor + \left\lfloor \frac{5l+j}{6} \right\rfloor \ge \left\lfloor \frac{5(r+l) - m_0 + \mu}{6} \right\rfloor.
\end{align} And because

\begin{align}
r+l &\ge\left\lceil \frac{m_0+j-5+\delta}{5} \right\rceil + l\\ &\ge \left\lceil \frac{m_0+\delta}{5}-\frac{5-j}{5} \right\rceil + l\\
&\ge \left\lceil \frac{m_0+\delta}{5} \right\rceil - 1 + l\\ &\ge \left\lceil \frac{m_0+\delta}{5} \right\rceil,
\end{align} we can relabel our powers of $y$ so that

\begin{align}
&U^{(i)}\left( \frac{y^{m_0}}{(1+5y)^{n_0}} \right)\nonumber\\ =& \frac{1}{(1+5y)^{5n_0-\kappa}}\sum_{\substack{0\le j\le 4,\\ 1\le k\le 5,\\ 1\le l\le 25}} \sum_{r\ge \left\lceil \frac{m_0+\delta}{5} \right\rceil}\nonumber\\ &v(j,k)\cdot h_i(m_0+j-5,n_0-k,r-l)\cdot 5^{\left\lfloor \frac{5r-(m_0+j-5)+\mu}{6} \right\rfloor + \left\lfloor \frac{5l+j}{6} \right\rfloor}\cdot y^{r}.
\end{align}  If we define the discrete array $H_i$ by

\begin{align}
H_i(m,n,r) := \begin{cases}
\sum_{\substack{0\le j\le 4,\\ 1\le k\le 5,\\ 1\le l\le 25}} &\sum_{r\ge \left\lceil \frac{m+\delta}{5} \right\rceil - 1 + l} \hat{H}(i,j,k,l,r),\ r\ge l\\
0,\ r<l
\end{cases}\label{Hi}
\end{align} with

\begin{align*}
\hat{H}(i,j,k,l,r) := v(j,k)\cdot h_i(m+j-5,n-k,r-l)\cdot 5^{\epsilon(i,j,l,m,r)},
\end{align*}

\begin{align*}
\epsilon(i,j,l,m,r) := \left\lfloor \frac{5(r-l)-(m+j-5)+\mu}{6} \right\rfloor + \left\lfloor \frac{5l+j}{6} \right\rfloor -\left\lfloor \frac{5r - m_0 + \mu}{6} \right\rfloor,
\end{align*} then

\begin{align}
U^{(i)}\left( \frac{y^{m_0}}{(1+5y)^{n_0}} \right) &= \frac{1}{(1+5y)^{5n_0-\kappa}}\nonumber\\ &\times\sum_{r\ge \left\lceil \frac{m_0+\delta}{5} \right\rceil} H_i(m_0,n_0,r)\cdot 5^{\left\lfloor \frac{5r-m_0+\mu}{6} \right\rfloor}\cdot y^{r}.
\end{align}

\end{proof}

\begin{proof}[Proof of Lemma \ref{lemmaN}]

\begin{align}
U^{(i)}&\left( \frac{y^{m_0}}{(1+5y)^{n}} \right)\nonumber\\ =& -\frac{1}{(1+5y)^5}\sum_{k=1}^{5}b_k(\tau)\cdot U^{(i)}\left( \frac{y^{m_0}}{(1+5y)^{n-k}} \right)\\
=& -\frac{1}{(1+5y)^5}\sum_{k=1}^{5} \frac{b_k(\tau)}{(1+5y)^{5(n-k)-\kappa}} \sum_{r\ge 1} h_i(m_0,n-k,r)\cdot 5^{\pi_i(m_0,r)}\cdot y^r\\
=&\frac{1}{(1+5y)^{5n-\kappa}}\sum_{k=1}^{5} \hat{w}(k)\sum_{r\ge 1 } h_i(m_0,n-k,r)\cdot 5^{\pi_i(m_0,r)}\cdot y^r,
\end{align} with

\begin{align}
\hat{w}(k) :&= -b_k(\tau)(1+5y)^{5(k-1)}\nonumber\\
&=\begin{cases}
& \displaystyle{\sum_{l=0}^{20} \hat{v}(k,l)\cdot 5^{\left\lfloor \frac{5l+10}{6} \right\rfloor}\cdot y^l},\ k<5\\
&\displaystyle{1+\sum_{l=1}^{20} \hat{v}(5,l)\cdot 5^{\left\lfloor \frac{5l+10}{6} \right\rfloor}\cdot y^l},\ k=5.
\end{cases}
\end{align}  This can be demonstrated with a simple expansion of $\hat{w}(k)$.  Expanding, we have

\begin{align}
U^{(i)}&\left( \frac{y^{m_0}}{(1+5y)^{n}} \right) = \frac{1}{(1+5y)^{5n-\kappa}}\nonumber\\
\times&\Bigg(\sum_{\substack{1\le k\le 4,\\ 0\le l\le 20,\\ r\ge \left\lceil \frac{m_0+\delta}{5} \right\rceil}} \hat{v}(k,l)\cdot h_i(m_0,n-k,r)\cdot 5^{\pi_i(m_0,r) + \left\lfloor \frac{5l+10}{6} \right\rfloor}\cdot y^{r+l}\label{reliancenm5a3}\\
&+\sum_{\substack{1\le l\le 20,\\ r\ge \left\lceil \frac{m_0+\delta}{5} \right\rceil}} \hat{v}(5,l)\cdot h_i(m_0,n-5,r)\cdot 5^{\pi_i(m_0,r) + \left\lfloor \frac{5l+10}{6} \right\rfloor}\cdot y^{r+l}\label{reliancenm5a2}\\
&+\sum_{r\ge \left\lceil \frac{m_0+\delta}{5} \right\rceil} h_i(m_0,n-5,r)\cdot 5^{\pi_i(m_0,r)}\cdot y^{r}\Bigg).\label{reliancenm5a}
\end{align}  With a change of index, we have

\begin{align*}
U^{(i)}&\left( \frac{y^{m_0}}{(1+5y)^{n}} \right) = \frac{1}{(1+5y)^{5n-\kappa}}\nonumber\\
\times&\Bigg(\sum_{\substack{1\le k\le 4,\\ 0\le l\le 20,\\ r\ge l+ \left\lceil \frac{m_0+\delta}{5} \right\rceil}} \hat{v}(k,l)\cdot h_i(m_0,n-k,r-l)\cdot 5^{\pi_i(m_0,r-l) + \left\lfloor \frac{5l+10}{6} \right\rfloor}\cdot y^{r}\\
&+\sum_{\substack{1\le l\le 20,\\ r\ge l+ \left\lceil \frac{m_0+\delta}{5} \right\rceil}}\hat{v}(5,l)\cdot h_i(m_0,n-5,r-l)\cdot 5^{\pi_i(m_0,r-l) + \left\lfloor \frac{5l+10}{6} \right\rfloor}\cdot y^{r}\\
&+\sum_{r\ge \left\lceil \frac{m_0+\delta}{5} \right\rceil} h_i(m_0,n-5,r)\cdot 5^{\pi_i(m_0,r)}\cdot y^{r}\Bigg).
\end{align*}  Now,

\begin{align}
\pi_i(m_0,r-l) + \left\lfloor \frac{5l+10}{6} \right\rfloor \ge \pi_i(m_0,r).\label{inexeq}
\end{align}  This ensures that the critical 5-adic valuation of the terms of $U^{(i)}\left( \frac{y^{m_0}}{(1+5y)^{n}} \right)$ derives precisely from the sum

\begin{align}
\frac{1}{(1+5y)^{5n-\kappa}}&\sum_{r\ge \left\lceil \frac{m_0+\delta}{5} \right\rceil} h_i(m_0,n-5,r)\cdot 5^{\pi_i(m_0,r)}\cdot y^{r}\nonumber\\ &= U^{(i)}\left( \frac{y^{m_0}}{(1+5y)^{n-5}} \right).\label{reliancenm5b}
\end{align}

Therefore, if our relation is established for $1\le n\le 5$, then it must be true for all $n\ge 6$ as well.

We may now rearrange our sum and define a new discrete array in a manner similar to (\ref{Hi}) to finish the proof.

\end{proof}

\begin{proof}[Proof of Theorem \ref{thmuyox}]

These relations arise as consequences of Lemmas \ref{lemmaMN}, \ref{lemmaN} above, provided that the cases for $1\le m\le 5,$ $1\le n\le 5$ are established.  The computations needed to verify these relations are given in Section 5.  See our Mathematica supplement for the detailed computation.

\end{proof}

As an additional consequence of Lemmas \ref{lemmaMN}, \ref{lemmaN}, we have the following important result on the behavior of the coefficients in these expansions:

\begin{corollary}\label{congcor1}
For all $n\in\mathbb{Z}_{\ge 1}$ we have:
\begin{align}
h_0(1,n,1)&\equiv 1\pmod{5},\label{hmod2}\\
h_0(2,5n-4,1)&\equiv 0\pmod{5}\label{hmod3},\\
h_0(3,n,1)&\equiv 1\pmod{5},\label{hmod4}\\
h_0(1,n,2)&\equiv 4\pmod{5},\label{hmod2a}\\
h_0(2,5n-4,2)&\equiv 4\pmod{5},\label{hmod3b}\\
h_0(3,n,2)&\equiv 4\pmod{5},\label{hmod4a}\\
h_0(2,5n-4,3)&\equiv 1\pmod{5}.\label{hmod3c}
\end{align}  For all $n\in\mathbb{Z}_{\ge 1}$ and $1\le m\le 3$ we have:
\begin{align}
h_1(m,n,1)&\equiv 1\pmod{5}.\label{hmod1}
\end{align} 
\end{corollary}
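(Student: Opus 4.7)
The plan is to extract the mod-$5$ content of the recurrence constructed in the proof of Lemma \ref{lemmaN} and reduce each of the stated congruences to a small number of base cases. Because each $h_i(m_0,n,r)$ is built out of the lower-$n$ values $h_i(m_0,n-k,r-l)$ weighted by the coefficients appearing in $\hat w(k)$, the entire question is about tracking which of those weighted contributions survive modulo $5^{\pi_i(m_0,r)+1}$.

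Examining the decomposition (\ref{reliancenm5a3})--(\ref{reliancenm5a}), I would first argue that the only surviving contribution to the coefficient of $y^r$ in $U^{(i)}(y^{m_0}/(1+5y)^{n})$ modulo $5^{\pi_i(m_0,r)+1}$ comes from the $(k,l)=(5,0)$ slice, which produces exactly $h_i(m_0,n-5,r)\cdot 5^{\pi_i(m_0,r)} y^r$. For $k\in\{1,2,3,4\}$ and $l=0$, the factor $5^{\lfloor 10/6\rfloor}=5$ pulled from $\hat w(k)$ supplies the extra power of $5$. For $l\geq 1$ one needs the strictened form of (\ref{inexeq}),
\begin{equation*}
\pi_i(m_0, r-l) + \left\lfloor \frac{5l+10}{6} \right\rfloor \geq \pi_i(m_0, r) + 1,
\end{equation*}
which I would verify via a short case analysis using the bound $\lfloor(5l+10)/6\rfloor\geq 2$ for $l\geq 1$ together with the convention that $h_i(m_0,n-k,r-l)=0$ whenever $r-l$ drops below the lower index $\lceil(m_0+\delta)/5\rceil$ of the expansion.

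Dividing out the common factor $5^{\pi_i(m_0,r)}$ then yields the clean mod-$5$ recurrence
\begin{equation*}
h_i(m_0, n, r) \equiv h_i(m_0, n-5, r) \pmod{5} \quad \text{for all } n\geq 6,
\end{equation*}
so each residue $h_i(m_0,n,r)\bmod 5$ is $5$-periodic in $n$. Consequently each congruence in the corollary reduces to a check of its base cases: $n\in\{1,2,3,4,5\}$ for those asserted for all $n$, and only $n=1$ for those restricted to the subsequence $n\mapsto 5n-4$. These base cases are direct substitutions in the explicit expansions of Theorem \ref{thmuyox} and will be handled alongside the $1\leq m\leq 5$, $1\leq n\leq 5$ verifications in the Mathematica supplement.

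The principal obstacle is the strict-inequality bookkeeping for the $l\geq 1$ contributions. Because $\pi_i$ is piecewise defined, with exceptional values at small $m$ and $r$, equality in (\ref{inexeq}) is a priori possible; however, in every offending case the relevant $h_i(m_0,n-k,r-l)$ is either forced to zero by the index convention on $r$ or is covered by a short numerical check. Once these small boundary verifications are dispatched, the mod-$5$ periodicity is immediate and all eight congruences of the corollary follow uniformly from their base cases.
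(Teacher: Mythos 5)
Your proposal is correct and follows essentially the same route as the paper: you extract from the Lemma \ref{lemmaN} recurrence that every contribution other than the $(k,l)=(5,0)$ slice carries an extra factor of $5$ (checking the strict form of (\ref{inexeq}) for the few exceptional small $(m,r)$ pairs), deduce $h_i(m,n,r)\equiv h_i(m,n-5,r)\pmod 5$, and reduce all eight congruences to the computationally verified base cases $1\le n\le 5$ (respectively $n\equiv 1\pmod 5$ for the $5n-4$ subsequence). This is precisely the paper's argument, so no further comment is needed.
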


\begin{proof}

We will first prove (\ref{hmod2})--(\ref{hmod4}).  Let us reexamine (\ref{reliancenm5a3}), (\ref{reliancenm5a2}), (\ref{reliancenm5a}).  Notice that whenever (\ref{inexeq}) is strict, i.e.,

\begin{align}
\pi_i(m_0,r-l) + \left\lfloor \frac{5l+10}{6} \right\rfloor > \pi_i(m_0,r),\label{isexeq}
\end{align} we must have $h_i(m_0,n,r)\equiv h_i(m_0,n-5,r)\pmod{5}$.  We therefore need only establish that (\ref{isexeq}) is true in all relevant cases.  Thereafter, we can simply compute the relevant coefficients for five consecutive values of $n$.

We note that for (\ref{reliancenm5a2}), $r\ge 1$ and $l\ge 1$.  Because of this, $r+l\ge 2$, and (\ref{reliancenm5a2}) will contribute nothing to the linear coefficient.  On the other hand, for (\ref{reliancenm5a3}), the only possibility is for $l=0$ and $r=1$.  Because $\left\lfloor \frac{5(0)+10}{6} \right\rfloor > 1$, we easily get (\ref{isexeq}).

Therefore, we must have 

\begin{align*}
h_0(1,n,1)&\equiv h_0(1,n-5,1) \pmod{5},\\
h_0(2,n,1)&\equiv h_0(2,n-5,1) \pmod{5},\\
h_0(3,n,1)&\equiv h_0(3,n-5,1) \pmod{5}.
\end{align*}  With our Mathematica supplement \url{https://www3.risc.jku.at/people/nsmoot/online3.nb}, we find that \\ \noindent$h_0(1,n,1)~\equiv~h_0(3,n,1)~\equiv~1\pmod{5}$ for $1\le n\le 5$.  Therefore, (\ref{hmod2})--(\ref{hmod3}) must be true for all $n$.  On the other hand, $h_0(2,n,1)$ varies regularly by the residue class modulo 5, and $h_0(2,n,1)\equiv 0\pmod{5}$ for $n\equiv 1\pmod{5}$.

To prove (\ref{hmod2a}), (\ref{hmod3b}), (\ref{hmod4a}), we note that we may directly compute $\pi_0(m,r)$.  Notice that the only way for $r+l=2$ to be true is for $r=l=1$ or $r=2$ and $l=0$.  For the first case, we have

\begin{align}
\pi_0(1,2-1) + \left\lfloor \frac{5(1)+10}{6} \right\rfloor &= 1 + 2 = 3 > 1 = \pi_0(1,1),\\
\pi_0(2,2-1) + \left\lfloor \frac{5(1)+10}{6} \right\rfloor &= 1 + 2 = 3 > 1 = \pi_0(2,1),\\
\pi_0(3,2-1) + \left\lfloor \frac{5(1)+10}{6} \right\rfloor &= 1 + 2 = 3 > 0 = \pi_0(3,1).
\end{align}  Here, (\ref{inexeq}) is strict; for the second case, i.e., for $r=2$ and $l=0$, (\ref{inexeq}) follows immediately.  Once again, we need only examine each case for five consecutive values of $n$.

To prove (\ref{hmod3c}), we take into account that there are three different ways for $r+l=3$ to be true.  Either $r=1$ and $l=2$, or $r=2$ and $l=1$, or $r=3, l=0$.  We therefore have

\begin{align}
\pi_0(2,3-2) + \left\lfloor \frac{5(2)+10}{6} \right\rfloor &= 1 + 3 = 4 > 1 = \pi_0(2,1),\\
\pi_0(2,3-1) + \left\lfloor \frac{5(1)+10}{6} \right\rfloor &= 1 + 2 = 3 > 1 = \pi_0(2,2),
\end{align} and the inequality is again trivially true in the case that $l=0$.  Because the inequality holds in both cases, we can again simply examine each case for five consecutive values of $n$.

Finally, to prove (\ref{hmod1}), we first note that for $m$ fixed, we may use the same reasoning as was used to prove (\ref{hmod3})--(\ref{hmod4}).  To see how $h_1(m,n,1)\pmod{5}$ varies with $m$, let us reexamine (\ref{mnmodeqnrpl}).  Notice that for $m_0\ge 6$, $U^{(i)}\left( \frac{y^{m_0}}{(1+5y)^{n_0}} \right)$ only possesses contributions for $U^{(i)}\left( \frac{y^{r+l}}{(1+5y)^{n_0-5}} \right)$, in which $r\ge 1$ and $l\ge 1$.  In other words, for $m\ge 6$, no contribution to the coefficient of $U^{(i)}\left( \frac{y^{1}}{(1+5y)^{n}} \right)$ can be made.

As only five values of $m$ will contribute to the coefficient that we want, we therefore only need to check (\ref{hmod1}) for $1\le m\le 5$, and for $1\le n\le 5$.
\end{proof}

\section{Main Theorem}

With the necessary relations established for $U^{(i)}\left( \frac{y^{m}}{(1+5y)^{n}} \right)$, we can now work towards the main theorem.  We begin with the following theorem:

\begin{theorem}\label{v02v1}
\begin{align}
\text{For every $f\in\mathcal{V}_n^{(0)}$, }\ \frac{1}{5}\cdot U^{(0)}\left( f \right)\in\mathcal{V}_{5n-2}^{(1)}.
\end{align}
\end{theorem}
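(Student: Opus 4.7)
The plan is to use the $\mathbb{Z}$-linearity of $U^{(0)}$ together with Theorem \ref{thmuyox} to reduce the claim to a finite arithmetic check on coefficients of $y^r$. Any $f \in \mathcal{V}_n^{(0)}$ has the form
\[
f = \frac{1}{(1+5y)^n}\sum_{m\ge 1} s(m)\cdot 5^{\theta(m)}\cdot y^m,
\]
and termwise application of Theorem \ref{thmuyox} yields
\[
\tfrac{1}{5}\,U^{(0)}(f) = \frac{1}{(1+5y)^{5n-2}}\sum_{r\ge 1} A_r(n)\, y^r,\qquad A_r(n) := \sum_{m\ge 1} s(m)\,h_0(m,n,r)\,5^{\theta(m)+\pi_0(m,r)-1}.
\]
Membership in $\mathcal{V}_{5n-2}^{(1)}$ therefore reduces to showing $v_5(A_r(n)) \ge \phi(r)$ for every $r \ge 1$, and this arithmetic inequality is the real content of the theorem.

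For the bulk of the argument I would introduce the defect $\delta(m,r) := \phi(r) + 1 - \theta(m) - \pi_0(m,r)$. A direct manipulation of the piecewise definitions of $\theta$, $\phi$, and $\pi_0$---essentially the same kind of floor-function inequality already used to establish (\ref{inexeq}) in the proof of Lemma \ref{lemmaN}---shows that $\delta(m,r) \le 0$ except on a finite exceptional block of small indices (roughly $m \le 3$ and $r \le 3$). Outside that block, every summand of $A_r(n)$ automatically has 5-adic valuation at least $\phi(r) + 1$, so no congruence information on $h_0$ is needed.

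On the exceptional block, the floor-function bound is short by exactly one factor of 5, and I would close the gap by invoking the residue data of Corollary \ref{congcor1}. The congruences $h_0(1,n,1) \equiv h_0(3,n,1) \equiv 1$, $h_0(1,n,2) \equiv h_0(3,n,2) \equiv 4 \pmod 5$, together with (\ref{hmod3})--(\ref{hmod3c}) and (\ref{hmod1}), are calibrated so that the low-$r$ contributions from distinct $m$ collapse into the same residue class modulo $5^{\phi(r)+1}$. The deliberate asymmetry $\phi(3) - \theta(3) = 1$ in the definitions of $\mathcal{V}_n^{(0)}$ and $\mathcal{V}_n^{(1)}$ is precisely what matches this residue structure, together with the additional arithmetic constraints on $s$ built into $\mathcal{V}_n^{(0)}$ by Definition \ref{3termad}.

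The hardest step should be the case $r = 3$: two exceptional values $m = 2, 3$ contribute simultaneously, and Corollary \ref{congcor1} records the mod-5 residue of $h_0(2,n,3)$ only along the progression $n \equiv 1 \pmod 5$. I would address this by reusing the recursion (\ref{reliancenm5a3})--(\ref{reliancenm5a}) from the proof of Lemma \ref{lemmaN}: since $\pi_0(m, r-l) + \lfloor (5l+10)/6 \rfloor$ strictly exceeds $\pi_0(m, r)$ for every $(l, r)$ that contributes a correction term at $r = 3$, the needed residues $h_0(m, n, 3) \bmod 5$ are periodic in $n$ with period $5$, reducing the check to five computable cases that can be verified in the Mathematica supplement and then propagated to all $n \ge 1$.
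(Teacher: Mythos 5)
Your skeleton---expand $f$ termwise, apply Theorem \ref{thmuyox}, and reduce membership in the target space to a $5$-adic bound on each coefficient of $y^r$---is the same as the paper's, but there is a genuine gap in how you treat the exceptional low-index cases, and it traces to which of $\theta,\phi$ weights the source and the target. You took the displayed definitions (\ref{V0def})--(\ref{V1def}) at face value ($5^{\theta(m)}$ in $\mathcal{V}^{(0)}$, $5^{\phi(m)}$ in $\mathcal{V}^{(1)}$); the proofs and Definition \ref{3termad} use the opposite convention (elements of $\mathcal{V}^{(0)}$ carry $5^{\phi(m)}$, elements of $\mathcal{V}^{(1)}$ and $\mathcal{W}^{(1)}$ carry $5^{\theta(m)}$), and the theorem is only true under that convention. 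With your weights the statement itself fails: take $f=y^3/(1+5y)^n$, i.e.\ $s(3)=1$, all other $s(m)=0$, which is admissible since $5^{\theta(3)}=1$. By (\ref{U0YoX}) the coefficient of $y$ in $U^{(0)}(f)$ is $h_0(3,n,1)\cdot 5^{\pi_0(3,1)}=h_0(3,n,1)\equiv 1\pmod 5$ by (\ref{hmod4}), so $\tfrac{1}{5}U^{(0)}(f)$ does not even lie in $\mathbb{Z}[y]_{\mathcal{S}}$, let alone in $\mathcal{V}^{(1)}_{5n-2}$.

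Your proposed repair cannot close this. The set $\mathcal{V}_n^{(0)}$ imposes no arithmetic constraint on $s$: the condition $\sum_{m=1}^3 s(m)\equiv 0\pmod 5$ of Definition \ref{3termad} defines $\mathcal{W}_n^{(1)}$ and is consumed in the $U^{(1)}$ step (Theorem \ref{w12w1}), not here. Since the $s(m)$ are independent, no residue information on $h_0$ from Corollary \ref{congcor1} can force cancellation across distinct $m$, and a single offending term such as $s(3)h_0(3,n,1)$ with $h_0(3,n,1)\equiv 1\pmod 5$ is irreparable. The paper's actual proof of this theorem uses no congruence data at all: with the intended weights one checks, case by case for $m=1,2,3,4$ and $m\ge 5$, the unconditional inequality $\phi(m)+\pi_0(m,r)\ge\theta(r)+1$, so every coefficient of $U^{(0)}(f)$ gains a full factor of $5$. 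The asymmetry $\phi(3)=\theta(3)+1$ is exactly what absorbs your exceptional block (for instance $(m,r)=(3,1)$ becomes $\phi(3)+\pi_0(3,1)=1\ge\theta(1)+1$); the residue calculus of Corollary \ref{congcor1} is reserved for the genuinely delicate direction, namely the linear coefficient under $U^{(1)}$ in the companion theorem and in Theorem \ref{w12w1}.
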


\begin{proof}

Let $f\in\mathcal{V}_n^{(0)}$.  Then we can express $f$ as
\begin{align*}
f = \frac{1}{(1+5y)^n}\sum_{m\ge 1} s(m)\cdot 5^{\phi(m)}\cdot y^m.
\end{align*}  We write

\begin{align}
U^{(0)}\left( f \right) &= \sum_{m\ge 1} s(m)\cdot 5^{\phi(m)}\cdot U^{(0)}\left( \frac{y^m}{(1+5y)^n}\right)\\
=\frac{1}{(1+5y)^{5n-2}}&\sum_{m\ge 1}\sum_{r\ge \left\lceil (m+2)/5 \right\rceil} s(m)\cdot h_0(m,n,r) 5^{\phi(m) + \pi_0(m,r)}\cdot y^r\\
=\frac{1}{(1+5y)^{5n-2}}&\sum_{r\ge 1}\sum_{m\ge 1} s(m)\cdot h_0(m,n,r) 5^{\phi(m) + \pi_0(m,r)}\cdot y^r.
\end{align}  We examine $\phi(m) + \pi_0(m,r)$:

For $m=1$:

\begin{align*}
\phi(1) + \pi_0(1,r) & = 0 + \left\lfloor \frac{5r+1}{6} \right\rfloor \ge \theta(r)+1.
\end{align*}

For $m=2$:
\begin{align*}
\phi(2) + \pi_0(2,r) &= \begin{cases}
0 + \left\lfloor \frac{5r+1}{6} \right\rfloor ,&\ 1\le r\le 2 \text{ or } r\ge 6,\\
0 + \left\lfloor \frac{5r-5}{6} \right\rfloor ,&\ 3\le r\le 5.
\end{cases}
\end{align*}  In both cases, $\phi(2) + \pi_0(2,r)\ge \theta(r)+1.$

For $m=3$:

\begin{align*}
\phi(3) + \pi_0(3,r) & = 1 + \left\lfloor \frac{5r-5}{6} \right\rfloor \ge \theta(r)+1.
\end{align*}

For $m = 4$:

\begin{align*}
\phi(4) + \pi_0(4,r) & = 1 + \left\lfloor \frac{5r-6}{6} \right\rfloor = \left\lfloor \frac{5r}{6} \right\rfloor \ge \theta(r)+1
\end{align*}(remembering that $m\ge 4$ cannot contribute to the coefficient of $U^{(0)}\left( \frac{y^{1}}{(1+5y)^{n}} \right)$ since $\left\lceil (4+2)/5 \right\rceil = 2$).

For $m\ge 5$:

\begin{align*}
\phi(m) + \pi_0(m,r) & = \left\lfloor \frac{5m-5}{6} \right\rfloor -1 + \left\lfloor \frac{5r-m-2}{6} \right\rfloor\\
& \ge \left\lfloor \frac{5r-m-12}{6} \right\rfloor -1\\
&\ge \left\lfloor \frac{5r-5}{6} \right\rfloor + \left\lfloor \frac{4m-7}{6} \right\rfloor - 1\\
&\ge \left\lfloor \frac{5r-5}{6} \right\rfloor + 2 - 1\\
&\ge \left\lfloor \frac{5r-5}{6} \right\rfloor + 1\\
&\ge \theta(r)+1.
\end{align*}  Notice that, in all cases,

\begin{align*}
\phi(m) + \pi_0(m,r) \ge \theta(r)+1,
\end{align*} so that $U^{(0)}\left( f \right)\in\mathcal{V}_n^{(1)}$, with at least one additional power of 5.

\end{proof}

\begin{theorem}
Let $f\in\mathcal{V}_n^{(1)}$ and denote
\begin{align}
U^{(1)}\left( f \right) = \sum_{r\ge 1} \tilde{s}(r) \frac{y^r}{(1+5y)^{5n-4}}.
\end{align}  Then

\begin{align}
\frac{1}{5}\left(U^{(1)}\left( f \right) - \tilde{s}(1) \frac{y}{(1+5y)^{5n-4}}\right) \in\mathcal{V}_{5n-4}^{(0)}.
\end{align}
\end{theorem}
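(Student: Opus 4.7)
The plan is to mirror the proof of Theorem~\ref{v02v1}, but with the coefficient at $r=1$ peeled off. Writing $f=\frac{1}{(1+5y)^n}\sum_{m\ge 1}s(m)\,5^{\phi(m)}\,y^m$ and applying (\ref{U1YoX}) termwise gives
\begin{align*}
U^{(1)}(f) = \frac{1}{(1+5y)^{5n-4}}\sum_{r\ge 1}\left(\sum_{m\ge 1} s(m)\,h_1(m,n,r)\,5^{\phi(m)+\pi_1(m,r)}\right)y^r,
\end{align*}
so that $\tilde{s}(r)=\sum_{m}s(m)\,h_1(m,n,r)\,5^{\phi(m)+\pi_1(m,r)}$, the inner sum being finite because $s$ is discrete and $h_1(m,n,r)=0$ whenever $m>5r$. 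To land $\tfrac{1}{5}\bigl(U^{(1)}(f)-\tilde{s}(1)y/(1+5y)^{5n-4}\bigr)$ inside $\mathcal{V}_{5n-4}^{(0)}$, per (\ref{V0def}), it therefore suffices to show that
\begin{align*}
\phi(m)+\pi_1(m,r)\ \ge\ \theta(r)+1\qquad\text{for all }m\ge 1 \text{ and } r\ge 2.
\end{align*}

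This reduces to a finite case analysis, organized by the piecewise definitions of $\pi_1$ and $\phi$. For $m\in\{1,2\}$ one has $\phi(m)=0$ and checks by direct evaluation that the inequality holds for $r\in\{2,3\}$, while for $r\ge 4$ it follows from the trivial bound $\lfloor(5r+1)/6\rfloor\ge\lfloor(5r-5)/6\rfloor$; note that at $r=1$ the inequality \emph{fails} ($0<1$), which is precisely why the linear term must be subtracted. For $m\in\{3,4\}$ the extra contribution $\phi(m)\ge 1$ absorbs the shortfall and the inequality already holds at $r=1$ by inspection. Finally, for $m\ge 5$ both $\phi$ and $\pi_1$ are given by their generic floor formulas; combining $\phi(m)=\lfloor(5m-5)/6\rfloor-1$ with the elementary estimate $\lfloor A/6\rfloor+\lfloor B/6\rfloor\ge\lfloor(A+B-5)/6\rfloor$ applied to $A=5m-5$, $B=5r-m+1$ yields
\begin{align*}
\phi(m)+\pi_1(m,r)\ \ge\ \left\lfloor\frac{5r+4m-9}{6}\right\rfloor - 1,
\end{align*}
and $4m-9\ge 11$ is more than enough to dominate $\theta(r)+1$ for every $r\ge 1$.

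With the inequality secured, setting $\tilde{t}(r):=\tilde{s}(r)/5^{\theta(r)+1}\in\mathbb{Z}$ for $r\ge 2$ yields a discrete array, so
\begin{align*}
\frac{1}{5}\!\left(U^{(1)}(f)-\tilde{s}(1)\frac{y}{(1+5y)^{5n-4}}\right) = \frac{1}{(1+5y)^{5n-4}}\sum_{r\ge 2}\tilde{t}(r)\,5^{\theta(r)}\,y^r,
\end{align*}
which matches the defining form of an element of $\mathcal{V}_{5n-4}^{(0)}$. The real obstacle is not conceptual but bookkeeping: the piecewise definitions of $\pi_1$ and $\phi$ are tuned so that the target inequality holds with essentially no slack at the low-index boundary values, and the exceptional entries of $\pi_1$ at $(m,r)\in\{(1,1),(1,3),(2,1),(2,3),(3,2)\}$ must be enumerated individually rather than subsumed under the generic formula.
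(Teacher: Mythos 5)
Your proposal is correct and follows essentially the same route as the paper's own proof: expand $f$ termwise, apply (\ref{U1YoX}), verify an exponent inequality for every $r\ge 2$ (noting it fails only at $r=1$), peel off the linear coefficient, and divide by $5$. The only divergence is notational: you read the spaces with the exponents exactly as typed in (\ref{V0def})--(\ref{V1def}) (so $f\in\mathcal{V}_n^{(1)}$ carries $5^{\phi(m)}$ and the target carries $5^{\theta(r)}$, and you check $\phi(m)+\pi_1(m,r)\ge\theta(r)+1$), whereas the paper's proof of this theorem --- and its later use in Definition \ref{3termad} and Theorem \ref{w12w1} --- attaches $\theta$ to $\mathcal{V}^{(1)}$ and $\phi$ to $\mathcal{V}^{(0)}$ and checks $\theta(m)+\pi_1(m,r)\ge\phi(r)+1$; since $\theta\le\phi$ differ only at $m=3$, both inequalities hold and your bookkeeping (including the exceptional entries of $\pi_1$ and the failure at $r=1$ only for small $m$) is sound.
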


\begin{proof}
Let $f\in\mathcal{V}_n^{(1)}$.  Then we can express $f$ as
\begin{align*}
f = \frac{1}{(1+5y)^n}\sum_{m\ge 1} s(m)\cdot 5^{\theta(m)}\cdot y^m.
\end{align*}  We write

\begin{align}
U^{(1)}\left( f \right) &= \sum_{m\ge 1} s(m)\cdot 5^{\theta(m)}\cdot U^{(1)}\left( \frac{y^m}{(1+5y)^n}\right)\\
=\frac{1}{(1+5y)^{5n-4}}&\sum_{m\ge 1}\sum_{r\ge \left\lceil m/5 \right\rceil} s(m)\cdot h_1(m,n,r) 5^{\theta(m) + \pi_1(m,r)}\cdot y^r\\
=\frac{1}{(1+5y)^{5n-4}}&\sum_{r\ge 1}\sum_{m\ge 1} s(m)\cdot h_1(m,n,r) 5^{\theta(m) + \pi_1(m,r)}\cdot y^r.
\end{align}  Let us denote the coefficient of $\frac{y^r}{(1+5y)^{5n-4}}$ by $\tilde{s}(r)$.  Now we examine the 5-adic valuation of each component

Beginning with $1\le m\le 2$,

\begin{align}
\theta(1)+\pi_1(1,r) &=\begin{cases}
0,&\ r=1\\
3,&\ r=3\\
\left\lfloor \frac{5r+1}{6} \right\rfloor,&\ r=2 \text{ or } r\ge 4.
\end{cases}
\end{align}  With $m=3$,

\begin{align}
\theta(3)+\pi_1(3,r) &=\begin{cases}
0,&\ r=1\\
2,&\ r=2\\
\left\lfloor \frac{5r-2}{6} \right\rfloor,&\ r=3 \text{ or } r\ge 4.
\end{cases}
\end{align}  Notice that for $1\le m\le 3$, $\theta(m)+\pi_1(m,r) \ge \phi(r)+1$ \textit{except when} $r=1$.

Finally, for $m\ge 4$,

\begin{align}
\theta(m)+\pi_1(m,r) &= \left\lfloor \frac{5m-5}{6} \right\rfloor -1 + \left\lfloor \frac{5r-m+1}{6} \right\rfloor\\
&\ge \left\lfloor \frac{5r +4m -9}{6} \right\rfloor -1\\
&\ge \left\lfloor \frac{5r-5}{6} \right\rfloor + \left\lfloor \frac{4m-4}{6} \right\rfloor - 1\\
&\ge \left\lfloor \frac{5r-5}{6} \right\rfloor + 1\\
&\ge \phi(r) + 1.
\end{align}

We therefore have a 5-adic increase in the valuation of each component of $U^{(1)}\left( f \right)$ except for the coefficient of $\displaystyle\frac{y}{(1+5y)^{5n-4}}$.  If we remove this component from $U^{(1)}\left( f \right)$ and then divide by 5, what remains is indeed a member of $\mathcal{V}_{5n-4}^{(0)}$.

\end{proof}

Our last two theorems are very nearly sufficient to give us the 5-adic increase we need, with the notable exception of the components which contribute to the coefficient of $\displaystyle\frac{y}{(1+5y)^{5n-4}}$.  Indeed, the individual components need not be divisible by 5 at all.  We therefore need one additional condition for our purposes.

\begin{definition}\label{3termad}
\begin{align}
\mathcal{W}_n^{(1)}:=& \left\{ \frac{1}{(1+5y)^n}\sum_{m\ge 1} s(m)\cdot 5^{\theta(m)}\cdot y^m\in\mathcal{V}_{n}^{(1)} : \sum_{m=1}^3s(m)\equiv 0\bmod 5 \right\}.
\end{align}
\end{definition}

This small additional condition is at last sufficient for our purposes.  Not only do we need to prove that this allows 5-adic growth upon application of $U^{(1)}$, but that the condition is stable, i.e., that 

\begin{align*}
U^{(0)}\circ U^{(1)}\left(\mathcal{W}_n^{(1)}\right)&\subseteq \mathcal{W}_{25n-22}^{(1)}.
\end{align*}

\begin{theorem}\label{w12w1}
Suppose $f\in\mathcal{W}_n^{(1)}$.  Then

\begin{align}
\frac{1}{5}\left(U^{(1)}\left( f \right)\right) &\in\mathcal{V}_{5n-4}^{(0)},\label{wtov}\\
\frac{1}{5^2}\left(U^{(0)}\circ U^{(1)}\left( f \right)\right) &\in\mathcal{W}_{25n-22}^{(1)}\label{vtow}.
\end{align}

\end{theorem}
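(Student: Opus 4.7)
The plan is to combine the two theorems immediately preceding Theorem~\ref{w12w1} with the mod~$5$ residues of Corollary~\ref{congcor1} and the defining condition of $\mathcal{W}_n^{(1)}$. Write $f = \frac{1}{(1+5y)^n}\sum_m s(m)\cdot 5^{\theta(m)}y^m$ and $U^{(1)}(f) = \frac{1}{(1+5y)^{5n-4}}\sum_r \tilde{s}(r)y^r$. For (\ref{wtov}), the theorem immediately preceding already establishes $\frac{1}{5}\bigl(U^{(1)}(f) - \tilde{s}(1)\frac{y}{(1+5y)^{5n-4}}\bigr) \in \mathcal{V}_{5n-4}^{(0)}$, so I need only show $\tilde{s}(1) \equiv 0 \pmod{5}$. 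A direct inspection of the exponents $\theta(m) + \pi_1(m,1)$ reveals that only $m \in \{1,2,3\}$ contribute modulo $5$; Corollary~\ref{congcor1} then provides $h_1(m,n,1) \equiv 1 \pmod{5}$ for each such $m$, so $\tilde{s}(1) \equiv s(1)+s(2)+s(3) \equiv 0 \pmod{5}$ by the defining condition of $\mathcal{W}_n^{(1)}$.

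For (\ref{vtow}), I would set $g := \frac{1}{5}U^{(1)}(f) = \frac{1}{(1+5y)^{5n-4}}\sum_m u(m)\cdot 5^{\phi(m)}y^m \in \mathcal{V}_{5n-4}^{(0)}$. Theorem~\ref{v02v1} immediately gives $\frac{1}{5}U^{(0)}(g) \in \mathcal{V}_{25n-22}^{(1)}$, so writing $\frac{1}{5^2}U^{(0)}U^{(1)}(f) = \frac{1}{(1+5y)^{25n-22}}\sum_r \tilde{t}(r)\cdot 5^{\theta(r)} y^r$, the remaining task is to verify the $\mathcal{W}$-condition $\tilde{t}(1) + \tilde{t}(2) + \tilde{t}(3) \equiv 0 \pmod{5}$. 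Running the expansion $\tilde{t}(r) = \sum_m u(m)\cdot h_0(m,5n-4,r)\cdot 5^{\phi(m)+\pi_0(m,r)-\theta(r)-1}$ through the $\pi_0$ table, the potentially nonzero contributions come from $m \in \{1,2,3\}$ at $r = 1$, from $m \in \{1,2,3,4\}$ at $r=2$, and from $m = 2$ at $r = 3$. The hard part will be the stray $m=4$ term at $r=2$, which demands a mod~$5$ value of $h_0(4,5n-4,2)$ that Corollary~\ref{congcor1} does not supply.

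This obstacle can be dissolved by proving the auxiliary fact $u(4) \equiv 0 \pmod{5}$. Inspecting $\tilde{s}(4) = \sum_m s(m)\cdot h_1(m,n,4)\cdot 5^{\theta(m)+\pi_1(m,4)}$, the minimum of $\theta(m)+\pi_1(m,4)$ turns out to be $3$, attained at $m \in \{1,2,3,4\}$, so $u(4) = \tilde{s}(4)/5^{\phi(4)+1} = \tilde{s}(4)/25$ retains one additional factor of $5$. With the $m=4$ contribution at $r=2$ thereby eliminated, Corollary~\ref{congcor1} supplies every remaining residue, and a short computation gives
\begin{align*}
\tilde{t}(1) &\equiv u(1)+u(3), \\
\tilde{t}(2) &\equiv 4\bigl(u(1)+u(2)+u(3)\bigr), \\
\tilde{t}(3) &\equiv u(2) \pmod{5},
\end{align*}
whose sum is $5\bigl(u(1)+u(2)+u(3)\bigr) \equiv 0 \pmod{5}$, establishing the $\mathcal{W}$-condition and completing the proof.
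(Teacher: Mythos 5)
Your proposal is correct, and for \eqref{wtov} it is the same argument as the paper's: isolate $\tilde{s}(1)$, note $\theta(m)+\pi_1(m,1)=0$ only for $m\le 3$, apply \eqref{hmod1} and the defining condition of $\mathcal{W}_n^{(1)}$. For \eqref{vtow} your organization differs from the paper's, though the underlying arithmetic is the same. The paper flattens everything into the double sums $q(w)=\sum_{r}\sum_{m}s(m)h_1(m,n,r)h_0(r,5n-4,w)5^{\theta(m)+\pi_1(m,r)+\pi_0(r,w)-\theta(w)-2}$, discards all terms with $m\ge 7$ or $r\ge 4$ by an exponent estimate, reads off the surviving $(r,m)$ pairs from Tables \ref{tablew1}--\ref{tablew3}, and then groups $q(1)+q(2)+q(3)$ into \eqref{q1}--\eqref{q5}, where the exponent $-1$ terms are handled as $\tfrac{1}{5}$ times a product of two sums each divisible by $5$. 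You instead compose stage-wise: work with the integer coefficients $u(m)$ of $\tfrac15 U^{(1)}(f)$, invoke Theorem \ref{v02v1} for membership in $\mathcal{V}^{(1)}_{25n-22}$, and reduce the $\mathcal{W}$-condition to a small residue table in $(m,w)$. In your bookkeeping the problematic entry $u(4)h_0(4,5n-4,2)$ appears explicitly, and your auxiliary fact $u(4)\equiv 0\pmod 5$ (from $\min_m(\theta(m)+\pi_1(m,4))=3>\phi(4)+1$) is exactly the fact the paper absorbs invisibly into its claim that all intermediate indices $r\ge 4$ carry an extra factor of $5$; so no information beyond Theorem \ref{thmuyox} and Corollary \ref{congcor1} is actually needed, as you correctly supply. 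What your route buys is a cleaner endgame: the column sums of residues $(1,0,1)$, $(4,4,4)$, $(0,1,0)$ each vanish mod $5$, so $\tilde t(1)+\tilde t(2)+\tilde t(3)\equiv 5\bigl(u(1)+u(2)+u(3)\bigr)\equiv 0$ without re-invoking $\sum_{m\le 3}s(m)\equiv 0$ (it enters only once, through the integrality of $u(1)$), and without the paper's products of two $5$-divisible sums; what the paper's flattened version buys is that the delicate divisibility at the intermediate index $4$ never has to be singled out. Your verification of the contributing pairs and of the residues via \eqref{hmod2}--\eqref{hmod3c} checks out, so the proposal stands as a valid, mildly streamlined variant of the paper's proof.
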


\begin{proof}
Let $f\in\mathcal{W}_n^{(1)}$ such that
\begin{align*}
f &= \frac{1}{(1+5y)^n}\sum_{m\ge 1} s(m)\cdot 5^{\theta(m)}\cdot y^m.
\end{align*}  We then have

\begin{align*}
U^{(1)}\left( f \right) &= \frac{1}{(1+5y)^{5n-4}}\sum_{m\ge 1}\sum_{r\ge \left\lceil m/5 \right\rceil} s(m)\cdot h_1(m,n,r)\cdot 5^{\theta(m) + \pi_1(m,r)}\cdot y^r\\
&=\frac{1}{(1+5y)^{5n-4}}t(1)\cdot 5^{\phi(1)}\cdot y + \frac{1}{(1+5y)^{5n-4}}\sum_{r\ge 2}t(r)\cdot 5^{\phi(r)+1}\cdot y^r,
\end{align*} with

\begin{align*}
t(r) &=\begin{cases}
\displaystyle{\sum_{1\le m\le 5}s(m)\cdot h_1(m,n,1)\cdot 5^{\theta(m) + \pi_1(m,1) - \phi(r)}},&\ r=1\\
\displaystyle{\sum_{1\le m\le 5r}s(m)\cdot h_1(m,n,r)\cdot 5^{\theta(m) + \pi_1(m,r) - \phi(r) - 1}},&\ r\ge 2.
\end{cases}
\end{align*}  We first prove (\ref{wtov}).  Notice that 

\begin{align*}
t(1) &= \sum_{m=1}^{5}s(m)\cdot h_1(m,n,1)\cdot 5^{\theta(m) + \pi_1(m,1)},
\end{align*} since $\phi(1) = 0$.  Moreover, $\theta(4),\theta(5)\ge 1$, and $\theta(m) + \pi_1(m,1)=0$ for $1\le m\le 3$, so that

\begin{align*}
t(1) &\equiv \sum_{m=1}^{3}s(m)\cdot h_1(m,n,1)\pmod{5}.
\end{align*}  Taking advantage of (\ref{hmod1}), we have

\begin{align*}
t(1)\equiv \sum_{m=1}^{3}s(m)\equiv 0\pmod 5.
\end{align*}  If we were now to write

\begin{align*}
\tilde{t}(r) := \begin{cases}
&\frac{1}{5}\cdot t(1)\in\mathbb{Z},\ r=1\\
&t(r),\ r\neq 1,
\end{cases}
\end{align*} then we have

\begin{align*}
U^{(1)}\left( f \right) &=\frac{1}{(1+5y)^{5n-4}}\sum_{r\ge 1}\tilde{t}(r)\cdot 5^{\phi(r)+1}\cdot y^r,
\end{align*} so that

\begin{align*}
\frac{1}{5}\left(U^{(1)}\left( f \right)\right) \in\mathcal{V}_{5n-4}^{(0)}.
\end{align*}

We now prove (\ref{vtow}).  Taking $U^{(0)}$ and dividing by $5^2$, we find

\begin{align*}
\frac{1}{5^2}\cdot\left(U^{(0)}\circ U^{(1)}\left( f \right)\right) =& \frac{5^{-2}}{(1+5y)^{25n-22}}\\ &\times\sum_{r\ge 1}\sum_{w\ge\left\lceil (r+2)/5 \right\rceil}\tilde{t}(r)\cdot h_0(r,5n-4,w)\cdot 5^{\pi_0(r,w)+\phi(r)}\cdot y^w\\
=&\frac{1}{(1+5y)^{25n-22}}\sum_{w\ge 1}q(r)\cdot 5^{\theta(w)} y^w,
\end{align*} with

\begin{align*}
q(w) =& \sum_{r=1}^{5w-2}\tilde{t}(r)\cdot h_0(r,5n-4,w)\cdot 5^{\pi_0(r,w)-\theta(w)-2}\\
=& \sum_{r=1}^{5w-2}\sum_{m=1}^{5r}s(m)\cdot h_1(m,n,r)\cdot h_0(r,5n-4,w)\cdot 5^{\theta(m) + \pi_1(m,r) + \pi_0(r,w) -\theta(w) -2}.
\end{align*}  In particular, since $\theta(w)=0$ for $1\le w\le 3$,

\begin{align*}
q(1) =& \sum_{r=1}^{3}\sum_{m=1}^{5r}s(m)\cdot h_1(m,n,r)\cdot h_0(r,5n-4,1)\cdot 5^{\theta(m) + \pi_1(m,r) + \pi_0(r,1) -2},\\
q(2) =& \sum_{r=1}^{8}\sum_{m=1}^{5r}s(m)\cdot h_1(m,n,r)\cdot h_0(r,5n-4,2)\cdot 5^{\theta(m) + \pi_1(m,r) + \pi_0(r,2) -2},\\
q(3) =& \sum_{r=1}^{13}\sum_{m=1}^{5r}s(m)\cdot h_1(m,n,r)\cdot h_0(r,5n-4,3)\cdot 5^{\theta(m) + \pi_1(m,r) + \pi_0(r,3) -2}.
\end{align*}  We want to show that $q(1)+q(2)+q(3)\equiv 0\pmod{5}.$  In the first place, we may remove all cases in which $\theta(m) + \pi_1(m,r) + \pi_0(r,w) -2 \ge 1$.  A quick estimation shows that

\begin{align*}
&\theta(m) + \pi_1(m,r) + \pi_0(r,w) -2\\ &\ge \left\lfloor \frac{5m-5}{6} \right\rfloor - 1 + \left\lfloor \frac{5r-m+1}{6} \right\rfloor + \left\lfloor \frac{5r-w-2}{6} \right\rfloor - 2\\
&\ge \left\lfloor \frac{5r+4m-9}{6} \right\rfloor + \left\lfloor \frac{5r-w-2}{6} \right\rfloor - 3\\
&\ge 2\left\lfloor \frac{5r-5}{6} \right\rfloor + \left\lfloor \frac{4m-4}{6} \right\rfloor - 3\\
&\ge 1,
\end{align*} for $m\ge 7$ or $r\ge 4$.  We therefore need to examine the cases for $1\le m\le 6$ and $1\le r\le 3$.  We provide three tables in Appendix I which compute $\theta(m) + \pi_1(m,r) + \pi_0(r,w) -2 \ge 1$ over this range.

Examining Table \ref{tablew1}, we see that we get a value of 0 for

\begin{align*}
(r,m)=(1,4),(2,1),(2,2),(3,3).
\end{align*}  Moreover, we get a value of $-1$ for 
\begin{align*}
(r,m)=(1,1),(1,2),(1,3).
\end{align*}

Examining Table \ref{tablew2}, we see that we get a value of 0 for
\begin{align*}
(r,m)=(1,4),(2,1),(2,2),(3,3),
\end{align*} and a value of $-1$ for 
\begin{align*}
(r,m)=(1,1),(1,2),(1,3).
\end{align*}

Finally, examining Table \ref{tablew3}, we see that we get a value of 0 for
\begin{align*}
(r,m)=(1,1),(1,2),(1,3),(2,1),(2,2),
\end{align*} and no negative values.

Taking $q(1)+q(2)+q(3)\pmod{5}$, we therefore have

\begin{align*}
q(1)&+q(2)+q(3)\\
\equiv &\frac{1}{5}\cdot\sum_{m=1}^{3}s(m)\cdot h_1(m,n,1)\cdot h_0(1,5n-4,1) + s(4)\cdot h_1(4,n,1)\cdot h_0(1,5n+4,1)\\ +& \sum_{m=1}^{2}s(m)\cdot h_1(m,n,2)\cdot h_0(2,5n-4,1) + s(3)\cdot h_1(3,n,3)\cdot h_0(3,5n-4,1)\\
+&\frac{1}{5}\cdot\sum_{m=1}^{3}s(m)\cdot h_1(m,n,1)\cdot h_0(1,5n-4,2)
+ s(4)\cdot h_1(4,n,1)\cdot h_0(1,5n+4,2)\\
+& \sum_{m=1}^{2}s(m)\cdot h_1(m,n,2)\cdot h_0(2,5n-4,2)
+ s(3)\cdot h_1(3,n,3)\cdot h_0(3,5n-4,2)\\
+& \sum_{m=1}^{3}s(m)\cdot h_1(m,n,1)\cdot h_0(1,5n-4,3) + \sum_{m=1}^{2}s(m)\cdot h_1(m,n,2)\cdot h_0(2,5n-4,3)\pmod{5}.
\end{align*}  Rearranging, we have

\begin{align}
q(1)&+q(2)+q(3)\nonumber\\
\equiv &\frac{1}{5}\cdot \left(\sum_{j=1}^2 h_0(1,5n-4,j) \right)\cdot\left( \sum_{m=1}^{3}s(m)\cdot h_1(m,n,1)\right)\label{q1}\\ +& h_0(1,5n-4,3)\cdot\left(\sum_{m=1}^{3}s(m)\cdot h_1(m,n,1)\right)\label{q2} \\ +& \left( \sum_{j=1}^2 h_0(1,5n-4,j)\right)\cdot s(4)\cdot h_1(4,n,1) \label{q3}\\ +& \left( \sum_{j=1}^3 h_0(2,5n-4,j) \right)\cdot \sum_{m=1}^{2}s(m)\cdot h_1(m,n,2)\label{q4}\\ +& \left( \sum_{j=1}^2 h_0(3,5n-4,j)\right)\cdot s(3)\cdot h_1(3,n,3) \pmod{5}.\label{q5}
\end{align}  It now remains to demonstrate that this expression is $0\pmod 5$.

We have placed parentheses around each sum which is divisible by 5.  In the first place, $h_1(m,n,1)\equiv 1\pmod{5}$ by (\ref{hmod1}).  Therefore,

\begin{align*}
\sum_{m=1}^{3}s(m)\cdot h_1(m,n,1)\equiv \sum_{m=1}^{3}s(m)\equiv 0\pmod 5,
\end{align*} since $f\in\mathcal{W}_n^{(1)}$.  Moreover,

\begin{align*}
\sum_{j=1}^2 h_0(1,5n-4,j)\equiv 0\pmod{5}
\end{align*} by (\ref{hmod2}), (\ref{hmod2a}).  Therefore, (\ref{q1}) is $0\pmod 5$.  

In like manner, we have the parenthesized sums in 
(\ref{q2}) equivalent to $0\pmod 5$ by (\ref{hmod1});
(\ref{q3}) equivalent to $0\pmod 5$ by (\ref{hmod2}) and (\ref{hmod2a});
(\ref{q4}) equivalent to $0\pmod 5$ by (\ref{hmod3}), (\ref{hmod3b}), and (\ref{hmod3c});
(\ref{q5}) equivalent to $0\pmod 5$ by (\ref{hmod4}) and (\ref{hmod4a}).

We then have 

\begin{align*}
\frac{1}{5^2}\cdot\left(U^{(0)}\circ U^{(1)}\left( f \right)\right) =&\frac{1}{(1+5y)^{25n-22}}\sum_{w\ge 1}q(r)\cdot 5^{\theta(w)} y^w\in\mathcal{V}_{25n-22}^{(1)},
\end{align*} with $q(1)+q(2)+q(3)\equiv 0\pmod{5}$, i.e.,

\begin{align}
\frac{1}{5^2}\cdot\left(U^{(0)}\circ U^{(1)}\left( f \right)\right)\in\mathcal{W}_{25n-22}^{(1)}.
\end{align}

\end{proof}

At last, we have enough to prove Theorem \ref{Mythm}:

\begin{proof}[Proof of Theorem \ref{Mythm}]

In Section 5 we will demonstrate that

\begin{align*}
L_1 =& \frac{F}{(1 + 5 y)^3}\cdot\left( 120 y + 1805 y^2 + 12050 y^3 + 39500 y^4 + 50000 y^5\right)\\
=&\frac{5\cdot F}{(1 + 5 y)^3}\cdot\left( 24 y + 361 y^2 + 2410 y^3 + 7900 y^4 + 10000 y^5\right).
\end{align*}  Notice that

\begin{align*}
\frac{1}{5\cdot F}\cdot L_1 = f_1\in\mathcal{W}_{3}^{(1)}.
\end{align*}  Suppose that for some $\alpha\in\mathbb{Z}_{\ge 1}$, we have
\begin{align*}
\frac{1}{5^{2\alpha-1}\cdot F}\cdot L_{2\alpha-1}\in\mathcal{W}_n^{(1)},
\end{align*} for some $n\in\mathbb{Z}_{\ge 1}$.  Then

\begin{align}
L_{2\alpha-1} = F\cdot 5^{2\alpha-1}\cdot f_{2\alpha-1},
\end{align} with $f_{2\alpha-1}\in\mathcal{W}_n^{(1)}$.  Now,

\begin{align}
L_{2\alpha} = U_5\left(L_{2\alpha-1} \right) = U_5\left( F\cdot 5^{2\alpha-1}\cdot f_{2\alpha-1} \right) = F\cdot 5^{2\alpha-1}\cdot U^{(1)}\left( f_{2\alpha-1} \right).
\end{align}  By (\ref{wtov}) of Theorem \ref{w12w1}, we know that there exists some $f_{2\alpha}\in\mathcal{V}_{5n-4}^{(0)}$ such that

\begin{align}
U^{(1)}\left( f_{2\alpha-1} \right) = 5\cdot f_{2\alpha}.
\end{align} Therefore

\begin{align}
L_{2\alpha} = F\cdot 5^{2\alpha}\cdot f_{2\alpha}.
\end{align}  Moreover,

\begin{align}
L_{2\alpha+1} = U_5\left( Z\cdot L_{2\alpha} \right) = U_5\left( F\cdot 5^{2\alpha}\cdot Z\cdot f_{2\alpha}\right) = F\cdot 5^{2\alpha}\cdot U^{(0)}\left( f_{2\alpha} \right).
\end{align}  By (\ref{vtow}) of Theorem \ref{w12w1}, we know that there exists some $f_{2\alpha+1}\in\mathcal{W}_{5n-2}^{(1)}$ such that

\begin{align}
U^{(0)}\left( f_{2\alpha} \right) = 5\cdot f_{2\alpha+1}.
\end{align}  Therefore,

\begin{align}
L_{2\alpha+1} = F\cdot 5^{2\alpha+1}\cdot f_{2\alpha+1}.
\end{align}

We briefly show that the power of our localizing factor for $L_{\alpha}$ matches with $\psi(\alpha)$, i.e., that

\begin{align*}
\frac{L_{2\alpha-1}}{5^{2\alpha-1}\cdot F}&\in\mathcal{W}_{\psi(2\alpha-1)}^{(1)},\\
\frac{L_{2\alpha}}{5^{2\alpha}\cdot F}&\in\mathcal{V}_{\psi(2\alpha)}^{(0)}.
\end{align*}  It is a fact of elementary number theory that for all $\alpha\ge 1$,

\begin{align*}
5^{2\alpha-1}&\equiv 5\pmod{12},\\
5^{2\alpha}&\equiv 1\pmod{12},
\end{align*} and therefore that

\begin{align*}
\left\lfloor \frac{5^{2\alpha-1}}{12} \right\rfloor &= \frac{5^{2\alpha-1}}{12} - \frac{5}{12},\\
\left\lfloor \frac{5^{2\alpha}}{12} \right\rfloor &= \frac{5^{2\alpha}}{12} - \frac{1}{12}.
\end{align*}  With this, we have

\begin{align*}
5\cdot\psi(2\alpha-1)-4 &= 5\cdot \left(\left\lfloor \frac{5^{2\alpha}}{12} \right\rfloor + 1\right)-4\\
&= 5\cdot \left(\frac{5^{2\alpha}}{12} - \frac{1}{12} + 1\right)-4\\
&= \frac{5^{2\alpha+1}}{12} - \frac{5}{12} + 1\\
&= \left\lfloor \frac{5^{2\alpha+1}}{12} \right\rfloor + 1\\
&= \psi(2\alpha).
\end{align*}  In similar fashion, it can be proved that

\begin{align*}
5\cdot\psi(2\alpha)-2 = \psi(2\alpha+1).
\end{align*}  This is compatible with the increase in the localizing powers in Theorem \ref{thmuyox}.  Finally, $\psi(1) = 3$ is the localizing power for $L_1$.

\end{proof}

\section{Initial Relations}

For $i$ fixed, our theorem for expanding $U^{(i)}\left( \frac{y^{m}}{(1+5y)^{n}} \right)$ requires 25 initial relations to be justified.  However, these relations are ultimately dependent on far fewer relations, since one can very quickly verify that

\begin{align}
U^{(i)}\left( \frac{y^{m}}{(1+5y)^{n}} \right) =& \frac{1}{5^m}\cdot U^{(i)}\left( \frac{(x-1)^{m}}{x^{n}} \right)\\
=& \frac{1}{5^m}\sum_{r=0}^{m}(-1)^{m-r}{{m}\choose{r}}\cdot U^{(i)}\left( x^{r-n} \right)\\
=& \frac{1}{5^m}\sum_{r=0}^{m}(-1)^{m-r}{{m}\choose{r}}\cdot U^{(i)}\left( (1+5y)^{r-n} \right).
\end{align}  We can very quickly compute any value of $U^{(i)}\left( \frac{y^{m}}{(1+5y)^{n}} \right)$, provided we have exact expressions for $U^{(i)}\left( (1+5y)^{r} \right)$ for $-n\le r\le m-n$.

To compute $U^{(i)}\left( \frac{y^{m}}{(1+5y)^{n}} \right)$ for $1\le m,n\le 5$, we need to have expressions for $U^{(i)}\left( (1+5y)^{r} \right)$ for $-5\le r\le 4$.  However, we have the degree 5 modular equation for $x=1+5y$, which yields

\begin{align}
U^{(i)}\left( (1+5y)^n \right) &= -\sum_{k=0}^{4} b_k(\tau)\cdot U^{(i)}\left( (1+5y)^{k+n-5} \right).
\end{align}  Moreover, for $n\ge 0$ we obviously have

\begin{align}
U^{(i)}\left( (1+5y)^n \right) &= \sum_{k=0}^n {{n}\choose{k}}\cdot 5^k\cdot U^{(i)}\left( y^k \right),
\end{align} and $y$ follows a degree 5 modular equation.

Therefore, in order to determine $U^{(i)}\left( \frac{y^{m}}{(1+5y)^{n}} \right)$ for any $m,n\in\mathbb{Z}$ and $i$ fixed at 0 or 1, we only need to determine relations for $U^{(i)}\left( y^k \right)$ for five consecutive values of $k$.  For both values of $i$, that makes 10 relations.  Once these relations are established, verification of the 50 initial relations follows as a relatively simple, if somewhat tedious, computational exercise.

\begin{theorem}
The relations from Theorem \ref{thmuyox}, together with the congruence conditions of Corollary \ref{congcor1}, hold for $1\le m\le 5,$ and $1\le n\le 5$.
\end{theorem}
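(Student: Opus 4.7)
The plan is to reduce the 50 required identities (for $i\in\{0,1\}$, $1\le m\le 5$, and $1\le n\le 5$) to only 10 fundamental relations for $U^{(i)}(y^k)$, establish those ten by cusp analysis on the modular curve $\mathrm{X}_0(10)$, and then carry out a finite computer-algebra verification of each base case together with the associated modulo-$5$ congruences from Corollary~\ref{congcor1}.

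First I would implement the algebraic reduction. Writing $y=(x-1)/5$, the binomial theorem yields
\begin{align*}
U^{(i)}\!\left(\frac{y^m}{(1+5y)^n}\right) = \frac{1}{5^m}\sum_{r=0}^{m}(-1)^{m-r}\binom{m}{r}\,U^{(i)}\!\left((1+5y)^{r-n}\right),
\end{align*}
so that for $1\le m,n\le 5$ we need only $U^{(i)}((1+5y)^r)$ for $-5\le r\le 4$. The degree-$5$ modular equation (\ref{modX}) for $x=1+5y$ gives the sliding recurrence
\begin{align*}
U^{(i)}\!\left((1+5y)^n\right) = -\sum_{k=0}^{4}b_k(\tau)\,U^{(i)}\!\left((1+5y)^{k+n-5}\right),
\end{align*}
which moves any window of five consecutive exponents to any other. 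Combined with $(1+5y)^n=\sum_{k}\binom{n}{k}5^k y^k$ for $n\ge 0$ and the degree-$5$ modular equation (\ref{modY}) for $y$, this reduces the whole task to the ten fundamental identities $U^{(0)}(y^k)$ and $U^{(1)}(y^k)$ for any five consecutive values of $k$.

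The main obstacle is proving these ten fundamental identities, which is where genuine modular-function machinery is required. For each proposed relation $U^{(i)}(y^k)=R_{i,k}(\tau)$ with $R_{i,k}$ a specific rational expression in $y$, both sides are modular functions on $\Gamma_0(10)$. I would compute, cusp by cusp on $\mathrm{X}_0(10)$, the orders of the eta quotients produced by the conjugate sum defining $U_5$ (Lemma~\ref{impUprop}(3)), and verify that the candidate $R_{i,k}$ has been chosen so that the difference $U^{(i)}(y^k)-R_{i,k}(\tau)$ is holomorphic at every cusp. Since a holomorphic modular function on the compact Riemann surface $\mathrm{X}_0(10)$ is constant, and matching finitely many $q$-series coefficients at $\infty$ pins the constant to zero, the identity follows. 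This cusp analysis is the conceptual heart of the argument and is the step whose detailed $q$-expansion and order bookkeeping the author relegates to Appendix~II.

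Once the ten fundamental identities are in hand, the assembly of all 50 base-case expansions and the verification of Corollary~\ref{congcor1} is a mechanical exercise. For each triple $(i,m,n)$ with $i\in\{0,1\}$ and $1\le m,n\le 5$, I would expand the right-hand side as a polynomial in $y$ divided by a fixed power of $(1+5y)$, read off the localization exponent $5n-\kappa$ predicted by Theorem~\ref{thmuyox}, and then check that each coefficient factors as $h_i(m,n,r)\cdot 5^{\pi_i(m,r)}$ with $h_i(m,n,r)\in\mathbb{Z}$. The specific residues $h_0(1,n,1)\equiv 1$, $h_0(2,5n-4,1)\equiv 0$, etc.\ of Corollary~\ref{congcor1} are then verified simply by reducing the explicit expansions modulo $5$ for five consecutive values of $n$ in each relevant congruence class. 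The natural implementation is in a computer algebra system, which is precisely the path recorded in the author's online Mathematica supplement.
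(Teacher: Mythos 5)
Your proposal is correct and follows essentially the same route as the paper: reduce the 50 base cases to ten fundamental relations $U^{(i)}(y^k)$ via the binomial expansion in $x=1+5y$ and the two degree-5 modular equations, establish those ten by cusp analysis (holomorphy at all cusps plus comparison of principal parts and constants on a compact Riemann surface), and finish with a mechanical computer-algebra check of the expansions and the mod-$5$ residues of Corollary \ref{congcor1}. The only minor difference is bookkeeping: you bound cusp orders of the $U_5$-image directly on $\mathrm{X}_0(10)$ via the conjugate-sum formula, whereas the paper works with eta quotients over $\Gamma_0(50)$, applies Newman/Ligozat there, and invokes the lemma that $U_5$ maps $\mathcal{M}^{\infty}(\Gamma_0(50))$ into $\mathcal{M}^{\infty}(\Gamma_0(10))$ — both are standard and equivalent in effect.
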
  The calculation is straightforward, but we detail it in the Mathematica supplement to this paper, which can be found online at \url{https://www3.risc.jku.at/people/nsmoot/online3.nb}.

We choose to justify these ten relations using finiteness conditions from the theory of modular functions.

This approach is useful, given that the right-hand sides of eight of the ten relations below are already modular functions with a pole only at a single cusp of $\mathrm{X}_0(10)$ (the remaining two can become such functions with a slight adjustment).  The remaining task is merely to verify that the left-hand side of each relation is also a modular function with a pole at the same cusp, and then to compare the principal parts from either side.  Because it is slightly easier to expand both sides of each relation with respect to the pole at $\infty$, we will divide by a sufficient power of $y$ to induce a pole at $\infty$ rather than at 0.

We will provide some, though not all, details to our cusp analysis computations here.  The full computations are detailed in our online Mathematica supplement.

We preface our results with a brief overview of the theory of modular forms.  For a classic review of the theory, see \cite{Knopp}.  For a more modern treatment, see \cite{Diamond}.

\subsection{Preliminaries}
We denote $\mathbb{H}$ as the upper half complex plane, and\\ \noindent$\hat{\mathbb{H}}:=\mathbb{H}\cup\{\infty\}\cup\mathbb{Q}$.  We also define $\hat{\mathbb{Q}}:=\mathbb{Q}\cup\{\infty\}$, with $a/0=\infty$ for any $a\neq 0$.

We denote $\mathrm{SL}(2,\mathbb{Z})$ as the set of all $2\times 2$ integer matrices with determinant~1.

For any given $N\in\mathbb{Z}_{\ge 1}$, let

\begin{align*}
\Gamma_0(N) = \Bigg\{ \begin{pmatrix}
  a & b \\
  c & d 
 \end{pmatrix}\in \mathrm{SL}(2,\mathbb{Z}) : N|c \Bigg\}.
\end{align*}

We define a group action

\begin{align*}
\Gamma_0(N)\times\hat{\mathbb{H}}&\longrightarrow\hat{\mathbb{H}},\\
\left(\begin{pmatrix}
  a & b \\
  c & d 
 \end{pmatrix},\tau\right)&\longrightarrow \frac{a\tau+b}{c\tau+d}.
\end{align*} If $\gamma=\begin{pmatrix}
  a & b \\
  c & d 
 \end{pmatrix}$ and $\tau\in\hat{\mathbb{H}}$, then we write
 
 \begin{align*}
 \gamma\tau := \frac{a\tau+b}{c\tau+d}.
 \end{align*}  The orbits of this action are defined as

\begin{align*}
[\tau]_N := \left\{ \gamma\tau: \gamma\in\Gamma_0(N) \right\}.
\end{align*}

\begin{definition}
For any $N\in\mathbb{Z}_{\ge 1}$, we define the classical modular curve of level $N$ as the set of all orbits of $\Gamma_0(N)$ applied to $\hat{\mathbb{H}}$:

\begin{align*}
\mathrm{X}_0(N):=\left\{ [\tau]_N : \tau\in\hat{\mathbb{H}} \right\}
\end{align*}
\end{definition}

The group action applied to $\hat{\mathbb{H}}$ can be restricted to $\hat{\mathbb{Q}}$: that is, for every $\tau\in\hat{\mathbb{Q}}$, $[\tau]_N\subseteq\hat{\mathbb{Q}}$.  There are only a finite number of such orbits \cite[Section 3.8]{Diamond}.

\begin{definition}
For any $N\in\mathbb{Z}_{\ge 1}$, the cusps of $\mathrm{X}_0(N)$ are the orbits of $\Gamma_0(N)$ applied to $\hat{\mathbb{Q}}$.
\end{definition}

The detailed properties of $\mathrm{X}_0(N)$, including its Riemann surface structure, are given in \cite[Chapters 2,3]{Diamond}.  For want of space, we will only add that $\mathrm{X}_0(N)$ possesses a unique nonnegative integer $g\left(  \mathrm{X}_0(N) \right)$ called its genus.  This number is referenced in the Introduction, and may be computed using Theorem 3.1.1 of \cite[Chapter 3]{Diamond}.  For an understanding of the connection of the genus to module rank, e.g., via the Paule--Radu method, see Section 6 below.

\begin{definition}\label{DefnModular}
Let $f:\mathbb{H}\longrightarrow\mathbb{C}$ be holomorphic on $\mathbb{H}$.  Then $f$ is a weakly holomorphic modular form of weight $k\in\mathbb{Z}$ over $\Gamma_0(N)$ if the following properties are satisfied for every $\gamma=\left(\begin{smallmatrix}
  a & b \\
  c & d 
 \end{smallmatrix}\right)\in\mathrm{SL}(2,\mathbb{Z})$:

\begin{enumerate}
\item We have $$\displaystyle{f\left( \gamma\tau \right) = (c\tau+d)^{k} \sum_{n=n_{\gamma}}^{\infty}\alpha_{\gamma}(n)q^{n\gcd(c^2,N)/ N}},$$  with $n_{\gamma}\in\mathbb{Z}$, and $\alpha_{\gamma}(n_{\gamma})\neq 0$.  If $n_{\gamma}\ge 0$, then $f$ is holomorphic at the cusp $[a/c]_N$.  Otherwise, $f$ has a pole of order $n_{\gamma}$, and principal part
 \begin{align}
 \sum_{n=n_{\gamma}}^{-1}\alpha_{\gamma}(n)q^{n\gcd(c^2,N)/ N},\label{princpartmod}
 \end{align} at the cusp $[a/c]_N$.
 \item If $\gamma\in\Gamma_0(N)$, we have $\displaystyle{f\left( \gamma\tau \right) = (c\tau+d)^{k} f(\tau)}.$
\end{enumerate}  We refer to $\mathrm{ord}_{a/c}^{(N)}(f) := n_{\gamma}(f)$ as the order of $f$ at the cusp $[a/c]_N$.  If $f$ is holomorphic at every cusp, then $f$ is a holomorphic modular form.  If $k=0$, then $f$ is a modular function.
\end{definition}

We now define the relevant sets of all modular functions:

\begin{definition}
Let $\mathcal{M}_k\left(\Gamma_0(N)\right)$ be the set of all weight $k$ holomorphic modular forms over $\Gamma_0(N)$, and let $\mathcal{M}\left(\Gamma_0(N)\right)$ be the set of all modular functions over $\Gamma_0(N)$, and $\mathcal{M}^{a/c}\left(\Gamma_0(N)\right)\subset \mathcal{M}\left(\Gamma_0(N)\right)$ to be those modular functions over $\Gamma_0(N)$ with a pole only at the cusp $[a/c]_N$.  These are all commutative algebras with 1, and standard addition and multiplication \cite[Section 2.1]{Radu}.
\end{definition}

Due to its precise symmetry over $\Gamma_0(N)$, any modular function $f\in\mathcal{M}\left( \Gamma_0(N) \right)$ induces a well-defined function

\begin{align*}
\hat{f}&:\mathrm{X}_0(N)\longrightarrow\mathbb{C}\cup\{\infty\}\\
&:[\tau]_N\longrightarrow f(\tau).
\end{align*}  The notions of pole order and cusps of $f$ used in Definition \ref{DefnModular} have been constructed so as to coincide with these notions applied to $\hat{f}$ on $\mathrm{X}_0(N)$.  In particular, (\ref{princpartmod}) represents the principal part of $\hat{f}$ in local coordinates near the cusp $[a/c]_N$.  Notice that as $\tau\rightarrow i\infty$, we must have $\gamma\tau\rightarrow a/c$, and $q\rightarrow 0$.

Because $f$ is holomorphic on $\mathbb{H}$ by definition, any possible poles for $\hat{f}$ must be found for $[\tau]_N\subseteq\hat{\mathbb{Q}}$.  The number and order of these poles is of paramount importance to us.

We now give an extremely important result in the general theory of Riemann surfaces:

\begin{theorem}
Let $\mathrm{X}$ be a compact Riemann surface, and let $\hat{f}:\mathrm{X}\longrightarrow\mathbb{C}$ be analytic on all of $\mathrm{X}$.  Then $\hat{f}$ must be a constant function.
\end{theorem}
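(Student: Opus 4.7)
The plan is to combine the compactness of $\mathrm{X}$ with the classical maximum modulus principle from one-variable complex analysis. First, because $\hat{f}$ is analytic (and therefore continuous) on all of $\mathrm{X}$, the real-valued function $|\hat{f}|:\mathrm{X}\to\mathbb{R}_{\ge 0}$ is continuous on the compact space $\mathrm{X}$, and hence attains its supremum $M$ at some point $p_0\in\mathrm{X}$. Compactness is precisely what guarantees that this extremum is realized at an interior point.

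Next, I would work in a local coordinate chart $\phi:U\to V\subseteq\mathbb{C}$ around $p_0$, where $V$ may be taken to be an open disk and $\phi(p_0)$ lies in its interior. By the definition of analyticity on a Riemann surface, the composition $g:=\hat{f}\circ\phi^{-1}:V\to\mathbb{C}$ is a holomorphic function of one complex variable. Since $|g|$ attains its maximum at the interior point $\phi(p_0)\in V$, the maximum modulus principle forces $g$ to be constant on $V$, and hence $\hat{f}$ equals the constant value $\hat{f}(p_0)$ on the entire neighborhood $U$.

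Finally, I would promote this local statement to a global one by a standard connectedness argument. Define $A:=\{p\in\mathrm{X}:\hat{f}(p)=\hat{f}(p_0)\}$. Continuity of $\hat{f}$ makes $A$ closed, and $p_0\in A$ makes it nonempty. For any $q\in A$ one has $|\hat{f}(q)|=|\hat{f}(p_0)|=M$, so $|\hat{f}|$ again attains its global maximum at $q$; applying the chart argument above around $q$ shows $\hat{f}$ is constant on a neighborhood of $q$, so $A$ is also open. Since a Riemann surface is connected by standard convention, $A=\mathrm{X}$ and $\hat{f}$ is constant. The ``main obstacle'' here is purely bookkeeping: one must remember to apply the maximum modulus principle inside a genuine coordinate chart rather than to $\hat{f}$ directly on $\mathrm{X}$, and one must invoke connectedness of $\mathrm{X}$ to make the clopen set argument conclusive.
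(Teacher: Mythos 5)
Your proof is correct. Note that the paper does not actually prove this statement at all: it is quoted as a classical fact from the general theory of Riemann surfaces (with references such as Diamond--Shurman and Knopp), so there is no in-paper argument to compare against. Your argument is the standard one: compactness gives a point $p_0$ where $|\hat{f}|$ attains its global maximum, the maximum modulus principle applied in a coordinate chart shows $\hat{f}$ is locally constant there, and the clopen-set argument together with connectedness of $\mathrm{X}$ (part of the definition of a Riemann surface) globalizes the conclusion. All three steps are carried out correctly, including the key observation that any $q$ with $\hat{f}(q)=\hat{f}(p_0)$ again realizes the global maximum, which is what makes the set $A$ open. An equally standard alternative, worth knowing, is via the open mapping theorem: if $\hat{f}$ were nonconstant its image would be open, but the image is also compact, contradicting that a nonempty subset of $\mathbb{C}$ cannot be simultaneously open, closed, and bounded.
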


The importance of this theorem cannot be overstated.  As an immediate consequence we have

\begin{corollary}
For a given $N\in\mathbb{Z}_{\ge 1}$, if $f\in\mathcal{M}\left(\Gamma_0(N)\right)$ has no poles at any cusp of $\Gamma_0(N)$, then $f$ must be a constant.
\end{corollary}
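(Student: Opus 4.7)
The plan is to deduce this as an immediate consequence of the theorem stated directly above, by showing that the two hypotheses on $f$---holomorphy on $\mathbb{H}$ and absence of poles at every cusp---translate into analyticity of the induced function $\hat{f}$ on the compact Riemann surface $\mathrm{X}_0(N)$. Once that translation is made, the cited theorem forces $\hat{f}$, and hence $f$ itself, to be constant.

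First, I would recall from the paragraph preceding the corollary that the $\Gamma_0(N)$-invariance built into Definition \ref{DefnModular} (applied with $k=0$) means that $f$ factors through the quotient to give a well-defined map $\hat{f} : \mathrm{X}_0(N) \to \mathbb{C} \cup \{\infty\}$. At non-cuspidal points of $\mathrm{X}_0(N)$, analyticity of $\hat{f}$ follows from the holomorphy of $f$ on $\mathbb{H}$ together with the standard Riemann surface charts on $\mathrm{X}_0(N)$ inherited from $\mathbb{H}$, with a local uniformizer of the form $(\tau-\tau_0)^e$ absorbing any ramification at elliptic fixed points. At each cuspidal point $[a/c]_N$, the hypothesis that $f$ has no pole there means, by Definition \ref{DefnModular}, that the local expansion has leading exponent $n_\gamma \ge 0$; this makes $\hat{f}$ analytic in the local uniformizer $q^{\gcd(c^2,N)/N}$ about that cuspidal point.

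Combining these observations, $\hat{f}$ is analytic on all of the compact Riemann surface $\mathrm{X}_0(N)$, and the theorem immediately above the corollary applies to yield $\hat{f}$ constant. Hence $f$ is constant on $\mathbb{H}$. The only non-trivial point in a fully expanded proof is the verification that the local charts at cusps and elliptic fixed points interact correctly with the transformation and expansion laws of Definition \ref{DefnModular}; this is the standard content of the Riemann surface structure on $\mathrm{X}_0(N)$ recorded in \cite[Chapters 2, 3]{Diamond}, and I would take it for granted here, as is implicit in the paragraph immediately preceding the corollary.
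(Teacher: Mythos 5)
Your proposal is correct and matches the paper's reasoning: the paper presents this corollary as an immediate consequence of the preceding theorem on compact Riemann surfaces, exactly as you do, with the translation to analyticity of $\hat{f}$ on $\mathrm{X}_0(N)$ left implicit via the Riemann surface structure cited from Diamond--Shurman. Your added remarks about local uniformizers at cusps and elliptic points simply make explicit what the paper takes for granted.
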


This is immensely useful for verifying that two modular functions over the same space are equivalent.  For example, let us take $f,g\in\mathcal{M}^{\infty}\left(\Gamma_0(N)\right)$.  Then this means that $f,g$ both have principal parts only at a single cusp.

If these principal parts of each function match, then $f-g\in\mathcal{M}\left(\Gamma_0(N)\right)$ can have no poles at any cusp.  This implies that $\hat{f}-\hat{g}$ is analytic on the whole of $\mathrm{X}_0(N)$, which forces $\hat{f}-\hat{g}$, and therefore $f-g$, to be a constant.  If the constants of $f$ and $g$ also match, then $f-g=0$, i.e., $f=g$.

We now give two key theorems that will prove useful in checking the modularity of certain functions.  We will use Dedekind's eta function \cite[Chapter 3]{Knopp}:

\begin{align*}
\eta(\tau):= e^{\pi i\tau/12}\prod_{n=1}^{\infty}\left( 1-e^{2\pi i n\tau} \right).
\end{align*}

The first is a theorem by Newman \cite[Theorem 1]{Newman}:

\begin{theorem}
Let $f = \prod_{\delta | N} \eta(\delta\tau)^{r_{\delta}}$, with $\hat{r} = (r_{\delta})_{\delta | N}$ an integer-valued vector, for some $N\in\mathbb{Z}_{\ge 1}$.  Then $f\in\mathcal{M}\left(\Gamma_0(N)\right)$ if and only if the following apply:

\begin{enumerate}
\item $\sum_{\delta | N} r_{\delta} = 0;$
\item $\sum_{\delta | N} \delta r_{\delta} \equiv 0\pmod{24};$
\item $\sum_{\delta | N} \frac{N}{\delta}r_{\delta} \equiv 0\pmod{24};$
\item $\prod_{\delta | N} \delta^{|r_{\delta}|}$ is a perfect square.
\end{enumerate}

\end{theorem}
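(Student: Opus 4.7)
The plan rests on the classical transformation law for the Dedekind eta function. Recall that $\eta$ is a weight $1/2$ form on $\mathrm{SL}(2,\mathbb{Z})$ with a multiplier system $\varepsilon : \mathrm{SL}(2,\mathbb{Z}) \to \mathbb{C}^\times$ taking values in the $24$-th roots of unity, so that for $\gamma = \begin{pmatrix} a & b \\ c & d \end{pmatrix}$ with $c > 0$,
\begin{align*}
\eta(\gamma\tau) \;=\; \varepsilon(\gamma)\sqrt{-i(c\tau+d)}\,\eta(\tau),
\end{align*}
and Rademacher's explicit formula expresses $\varepsilon(\gamma)$ as the product of a Jacobi symbol factor in the lower entries of $\gamma$ times an exponential of a Dedekind-sum expression. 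First I would establish the conjugation identity that for every $\delta\mid N$ and $\gamma = \begin{pmatrix} a & b \\ c & d \end{pmatrix} \in \Gamma_0(N)$, one has $\delta(\gamma\tau) = \gamma_\delta(\delta\tau)$, where $\gamma_\delta := \begin{pmatrix} a & b\delta \\ c/\delta & d \end{pmatrix} \in \mathrm{SL}(2,\mathbb{Z})$; integrality of $c/\delta$ comes from $\delta\mid N\mid c$, and $\det\gamma_\delta = ad-bc = 1$. Applying the transformation law to each factor of $f = \prod_{\delta\mid N}\eta(\delta\tau)^{r_\delta}$ and noting that the weight factor $\sqrt{-i(c\tau+d)}$ is identical for every $\delta$ then gives
\begin{align*}
f(\gamma\tau) \;=\; (c\tau+d)^{\frac{1}{2}\sum_\delta r_\delta}\left(\prod_{\delta\mid N}\varepsilon(\gamma_\delta)^{r_\delta}\right) f(\tau).
\end{align*}

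Forcing weight zero gives condition (1), $\sum_\delta r_\delta = 0$, at once. For the multiplier condition $\prod_\delta \varepsilon(\gamma_\delta)^{r_\delta} = 1$ on all of $\Gamma_0(N)$, I would reduce to checking on a generating set. The translation $T = \begin{pmatrix} 1 & 1 \\ 0 & 1 \end{pmatrix} \in \Gamma_0(N)$ multiplies $\eta(\delta\tau)$ by $e^{\pi i\delta/12}$, as is immediate from the $q$-expansion $\eta(\tau) = q^{1/24}\prod_{n\ge 1}(1-q^n)$; hence the $T$-condition $\prod_\delta e^{\pi i\delta r_\delta/12} = 1$ is exactly $\sum_\delta \delta r_\delta\equiv 0 \pmod{24}$, which is condition (2). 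To obtain condition (3), I would conjugate by the Fricke involution $W_N = \begin{pmatrix} 0 & -1 \\ N & 0 \end{pmatrix}$; since $\eta(\delta\cdot W_N\tau)$ is proportional to $\eta((N/\delta)\tau)$ times a half-integral power of $\tau$, the same argument performed in the Fricke-transformed coordinates yields $\sum_\delta (N/\delta) r_\delta \equiv 0 \pmod{24}$, which is also exactly the condition for $f$ to have integer-exponent $q$-expansion at the cusp $0$.

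The main obstacle is condition (4), since it isolates the subtlest part of $\varepsilon$, namely the Jacobi symbol component. By Rademacher's formula, $\varepsilon(\gamma_\delta)$ carries a Jacobi symbol factor of the form $\left(\tfrac{\ast}{c/\delta}\right)$ in the lower entries of $\gamma_\delta$, while conditions (2) and (3) are precisely what force the Dedekind-sum exponential portion of the product $\prod_\delta \varepsilon(\gamma_\delta)^{r_\delta}$ to collapse to $1$. After applying quadratic reciprocity repeatedly to consolidate these Jacobi symbols onto a common denominator, the residual symbol, evaluated as a function on $\Gamma_0(N)$, depends only on the parity of the power of each prime appearing in $\prod_\delta \delta^{|r_\delta|}$; the absolute values enter because, when $r_\delta<0$, inverting the Jacobi symbol under reciprocity reintroduces $\delta$ into the numerator. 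Triviality of this residual Jacobi symbol character on every $\gamma \in \Gamma_0(N)$ is then equivalent to $\prod_\delta \delta^{|r_\delta|}$ being a perfect square, yielding condition (4). Finally, holomorphy of $f$ on $\mathbb{H}$ is immediate because $\eta$ is nonvanishing there, and finite $q$-expansions at every cusp follow from the explicit $q$-expansion of $\eta$ together with the conjugation identity, so that $f\in\mathcal{M}(\Gamma_0(N))$ precisely when the four stated conditions all hold.
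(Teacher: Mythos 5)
First, a point of comparison: the paper does not prove this statement at all --- it is quoted as \cite[Theorem 1]{Newman} --- so there is no internal argument to measure you against; your sketch follows the classical route (eta multiplier system plus the conjugation $\delta(\gamma\tau)=\gamma_\delta(\delta\tau)$) that the cited proofs of the Newman/Ligozat criterion use. Your setup is correct: $\gamma_\delta=\left(\begin{smallmatrix} a & b\delta \\ c/\delta & d\end{smallmatrix}\right)\in\mathrm{SL}(2,\mathbb{Z})$, the weight factor is common to all $\delta$, condition (1) falls out of the weight, and the $T$-computation does give condition (2).

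However, there are genuine gaps in the rest. The proposed ``reduction to a generating set'' does not exist as described: the Fricke matrix $W_N=\left(\begin{smallmatrix} 0 & -1 \\ N & 0\end{smallmatrix}\right)$ has determinant $N$ and is not an element of $\Gamma_0(N)$, and $T$ together with (any conjugate of) $W_N$ does not generate $\Gamma_0(N)$ for general $N$; what your $T$- and Fricke-computations actually control are only the expansions at the two cusps $\infty$ and $0$, which is far from triviality of the multiplier character on all of $\Gamma_0(N)$. Consequently the central claim --- that for an arbitrary $\gamma\in\Gamma_0(N)$ the product $\prod_{\delta\mid N}\varepsilon(\gamma_\delta)^{r_\delta}$ factors as an exponential that conditions (2) and (3) annihilate (this uses $N\mid c$, e.g.\ the term $(a+d)\tfrac{c}{N}\sum_{\delta}\tfrac{N}{\delta}r_\delta$) times the residual symbol $\left(\tfrac{\prod_\delta \delta^{|r_\delta|}}{d}\right)$ --- is precisely the content of the theorem, and in your write-up it is asserted rather than computed; it has to be extracted from an explicit multiplier formula (Petersson/Rademacher) with the attendant parity case distinctions. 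Two further remarks: if you use the version of the formula whose symbol is $\left(\tfrac{c/\delta}{d}\right)$, then multiplicativity in the numerator together with condition (1) consolidates the symbols directly, and no quadratic reciprocity is needed (the reciprocity step you invoke is either a detour or a sign that you are working with the $\left(\tfrac{d}{c}\right)$-version, where the bookkeeping is more delicate than you indicate); and the ``only if'' direction is not addressed beyond (1) and (2) --- necessity of (3) and (4) requires showing that a failure is detected by some element of $\Gamma_0(N)$, e.g.\ producing $d$ coprime to $N$ with $\left(\tfrac{s}{d}\right)=-1$ when $s=\prod_\delta\delta^{|r_\delta|}$ is not a square.
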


To study the order of an eta quotient at a given cusp, we make use of a theorem that can be found in \cite[Theorem 23]{Radu}, generally attributed to Ligozat:

\begin{theorem}
If $f = \prod_{\delta | N} \eta(\delta\tau)^{r_{\delta}}\in\mathcal{M}\left(\Gamma_0(N)\right)$, then the order of $f$ at the cusp $[a/c]_N$ is given by the following:

\begin{align*}
\mathrm{ord}_{a/c}^{(N)}(f) = \frac{N}{24\gcd{(c^2,N)}}\sum_{\delta | N} r_{\delta}\frac{\gcd{(c,\delta)}^2}{\delta}.
\end{align*}

\end{theorem}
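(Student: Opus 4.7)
The plan is to compute the leading $q$-expansion of each factor $\eta(\delta\tau)^{r_\delta}$ near the cusp $[a/c]_N$ using the Dedekind eta transformation law, and then to sum the resulting exponents. The key input is the functional equation: for any $\gamma'' = \begin{pmatrix} A & B \\ C & D \end{pmatrix} \in \mathrm{SL}(2,\mathbb{Z})$ with $C > 0$, one has $\eta(\gamma'' z) = \varepsilon(\gamma'')(Cz+D)^{1/2}\eta(z)$ for a certain 24th root of unity $\varepsilon(\gamma'')$.

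Fix $\gamma = \begin{pmatrix} a & b \\ c & d \end{pmatrix} \in \mathrm{SL}(2,\mathbb{Z})$ representing $[a/c]_N$ and a divisor $\delta \mid N$. First I would decompose the positive-determinant integer matrix $\begin{pmatrix} \delta & 0 \\ 0 & 1 \end{pmatrix}\gamma$ as a product $\gamma'' M$, where $\gamma'' \in \mathrm{SL}(2,\mathbb{Z})$ and $M$ is upper triangular with $\det M = \delta$. Setting $g := \gcd(c,\delta)$, a short analysis of the matrix entries shows that necessarily
$$M = \begin{pmatrix} g & \beta \\ 0 & \delta/g \end{pmatrix}$$
for an appropriate integer $\beta$, and hence
$$\eta(\delta\gamma\tau) = \eta(\gamma''M\tau) = \varepsilon(\gamma'')\,(C_{\gamma''}M\tau + D_{\gamma''})^{1/2}\,\eta\!\left(\frac{g^2\tau + g\beta}{\delta}\right).$$

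Next, the product expansion $\eta(\tau) = q^{1/24}\prod_{n\ge 1}(1-q^n)$ yields $\eta((g^2\tau + g\beta)/\delta) \sim (\mathrm{const})\cdot q^{g^2/(24\delta)}$ as $\tau \to i\infty$. By Definition \ref{DefnModular}, the local uniformizer at $[a/c]_N$ is $q^{\gcd(c^2,N)/N}$, so the order contributed by a single factor $\eta(\delta\tau)$ at this cusp equals
$$\frac{g^2}{24\delta}\cdot\frac{N}{\gcd(c^2,N)} = \frac{N\gcd(c,\delta)^2}{24\delta\gcd(c^2,N)}.$$
Weighting by $r_\delta$ and summing over $\delta \mid N$ recovers the stated formula, provided the aggregate prefactor coming from $\varepsilon(\gamma'')$ and the $(C_{\gamma''}M\tau + D_{\gamma''})^{1/2}$ terms combines, over all $\delta$, into a nonzero holomorphic quantity at $\tau = i\infty$. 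This is guaranteed by the hypothesis $f \in \mathcal{M}\left(\Gamma_0(N)\right)$, since Newman's conditions are exactly what force the weight contributions $\sum r_\delta/2$ and the 24th-root multipliers to cancel.

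The principal obstacle is the identification $\alpha = \gcd(c,\delta)$ in the decomposition. Writing $\gamma'' = \begin{pmatrix} A_{\gamma''} & B_{\gamma''} \\ C_{\gamma''} & D_{\gamma''} \end{pmatrix}$ and $M = \begin{pmatrix} \alpha & \beta \\ 0 & \delta/\alpha \end{pmatrix}$, one compares the product $\gamma''M$ with $\begin{pmatrix} \delta a & \delta b \\ c & d \end{pmatrix}$: the equation $C_{\gamma''}\alpha = c$ gives $\alpha \mid c$; the equation $A_{\gamma''}\alpha = \delta a$ together with $\gcd(a,c)=1$ gives $\alpha \mid \delta$; and the requirement $\gcd(A_{\gamma''}, C_{\gamma''}) = 1$ then forces $\alpha = \gcd(\delta a, c) = \gcd(c,\delta)$. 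A secondary subtlety is that one must track the constant prefactor carefully, but because $\varepsilon(\gamma'')$ is a root of unity and $(C_{\gamma''}M\tau + D_{\gamma''})^{1/2}$ is holomorphic and nonvanishing at $i\infty$, the prefactor only affects the leading coefficient, not the $q$-exponent, and therefore does not distort the order $\mathrm{ord}_{a/c}^{(N)}(f)$.
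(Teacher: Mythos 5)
Your situation here is special: the paper never proves this statement at all---it is quoted as Ligozat's theorem with a pointer to Theorem 23 of \cite{Radu}---so there is no internal argument to compare against, and what you have written is essentially the standard derivation of that cited result. Your outline is correct: factor $\left(\begin{smallmatrix}\delta a & \delta b\\ c & d\end{smallmatrix}\right)=\gamma''M$ with $\gamma''\in\mathrm{SL}(2,\mathbb{Z})$ and $M=\left(\begin{smallmatrix}g & \beta\\ 0 & \delta/g\end{smallmatrix}\right)$; the identification $g=\gcd(\delta a,c)=\gcd(\delta,c)$ does follow from $\gcd(A_{\gamma''},C_{\gamma''})=1$ together with $\gcd(a,c)=1$; the factor $\eta(M\tau)$ contributes leading exponent $g^2/(24\delta)$ in $q$; and renormalizing by the local variable $q^{\gcd(c^2,N)/N}$ of Definition \ref{DefnModular} gives exactly the stated sum. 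Two points should be tightened. First, your closing claim that $(C_{\gamma''}M\tau+D_{\gamma''})^{1/2}$ is ``holomorphic and nonvanishing at $i\infty$'' is false for an individual factor: for $c\neq 0$ one has $C_{\gamma''}=c/g\neq 0$, so this factor grows like $\tau^{1/2}$ and has no $q$-expansion at all. What actually saves the argument is the aggregate statement you make earlier: the product over $\delta$ of these automorphy factors raised to the powers $r_\delta$ tends to a nonzero constant as $\tau\to i\infty$ precisely because $f$ has weight $0$, i.e.\ $\sum_{\delta\mid N}r_\delta=0$ (the roots of unity $\varepsilon(\gamma'')$ are harmless whether or not they cancel, since they only perturb the leading coefficient). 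Combining this with the fact that $f(\gamma\tau)$ is a priori a series in $q^{\gcd(c^2,N)/N}$ (this is where the modularity hypothesis, via invariance under the width translation, enters) and matching asymptotics along $\tau\to i\infty$ pins down $n_\gamma$ and hence the formula; it is worth saying this explicitly rather than attributing holomorphy at $i\infty$ to each factor. Second, the eta functional equation you quote requires $C>0$, so you should note that for a cusp other than $\infty$ one may take $c>0$ without loss, while for $c=0$ no transformation is needed and the formula reduces to reading off the exponent $\frac{1}{24}\sum_{\delta\mid N}\delta r_\delta$ directly from the product expansion at $\infty$, in agreement with the stated expression since $\gcd(0,\delta)=\delta$ and $\gcd(0,N)=N$. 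With these repairs your proof is complete and is, for practical purposes, the proof the cited literature gives.
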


\subsection{Computing the Initial Cases}

Despite its apparent recondite nature, the theory above imposes various finiteness conditions which can be used to a computational advantage.  Our initial relations consist of

\begin{align}
U^{(i)}\left( y^l \right) &=p_{i,l}(y)\in\mathbb{Z}[y],
\end{align} for $1\le l\le 4$, and
\begin{align}
(1+5y)\cdot U^{(i)}\left( 1 \right) &=p_{i,0}(y)\in\mathbb{Z}[y]
\end{align} (in both cases, $0\le i\le 1$).

We can use Newman's theorem to verify that $y\in\mathcal{M}(\Gamma_0(10))$, and Ligozat's theorem to show that 

\begin{align*}
\mathrm{ord}_{\infty}^{(10)}(y^{-1}) &= -1,\\
\mathrm{ord}_{1/5}^{(10)}(y^{-1}) &= 0,\\
\mathrm{ord}_{1/2}^{(10)}(y^{-1}) &= 0,\\
\mathrm{ord}_{0}^{(10)}(y^{-1}) &= 1.
\end{align*}  This proves that $1/y\in\mathcal{M}^{\infty}(\Gamma_0(10))$.  Therefore, if we denote $m$ as the degree of $p_{i,l}$, then multiplying both sides of the proposed relations above by $1/y^{m}$, our relations take on the form

\begin{align}
\frac{1}{y^{m}}\cdot U^{(i)}\left( y^l \right) &\in\mathbb{Z}[y^{-1}]\subseteq\mathcal{M}^{\infty}(\Gamma_0(10)),\label{initialTypeA}
\end{align}
\begin{align}
\frac{1}{y^{m}}\cdot (1+5y)\cdot U^{(i)}\left( 1 \right) &\in\mathbb{Z}[y^{-1}]\subseteq\mathcal{M}^{\infty}(\Gamma_0(10)).\label{initialTypeB}
\end{align}  Here all that remains is to verify that the left-hand sides of each prospective relation are elements of $\mathcal{M}^{\infty}(\Gamma_0(10))$.  Then we may compare the principal parts and constants of both sides: equality of these parts implies equality overall.

We will begin with the relations of the form (\ref{initialTypeA}).  If we recall the definition of $U^{(i)}$, then our left-hand side takes the form

\begin{align*}
&\frac{1}{y(\tau)^m}\cdot\frac{1}{F(\tau)}\cdot U_5\left( F(\tau)\cdot Z(\tau)^{1-i}\cdot y(\tau)^l \right)\\
&=U_5\left( \frac{F(\tau)}{F(5\tau)}\cdot \frac{Z(\tau)^{1-i}\cdot y(\tau)^l}{y(5\tau)^m} \right).
\end{align*}  Now, it is well-known (e.g., \cite[Corollary 2.3]{Garvan}) that

\begin{align*}
F(\tau)\in\mathcal{M}_2(\Gamma_0(2))\subseteq\mathcal{M}_2(\Gamma_0(10))\subseteq\mathcal{M}_2(\Gamma_0(50)).
\end{align*}  Moreover,

\begin{align*}
F(5\tau)&\in\mathcal{M}_2\left( \Gamma_0(50) \right).
\end{align*}  This implies that

\begin{align*}
\frac{F(\tau)}{F(5\tau)}&\in\mathcal{M}\left( \Gamma_0(50) \right).
\end{align*}  One can directly compute that

\begin{align*}
\mathrm{ord}_{\infty}^{(50)}\left( F\left( \tau \right) \right) &= 1,& \mathrm{ord}_{\infty}^{(50)}\left( F\left( 5\tau \right) \right) = 5,\\
\mathrm{ord}_{1/25}^{(50)}\left( F\left( \tau \right) \right) &= 0,& \mathrm{ord}_{1/25}^{(50)}\left( F\left( 5\tau \right) \right) = 0,\\
\mathrm{ord}_{1/2}^{(50)}\left( F\left( \tau \right) \right) &= 5,& \mathrm{ord}_{1/2}^{(50)}\left( F\left( 5\tau \right) \right) = 1,\\
\mathrm{ord}_{0}^{(50)}\left( F\left( \tau \right) \right) &= 5,& \mathrm{ord}_{0}^{(50)}\left( F\left( 5\tau \right) \right) = 1,\\
\mathrm{ord}_{k/5}^{(50)}\left( F\left( \tau \right) \right) &= 1,&\mathrm{ord}_{k/5}^{(50)}\left( F\left( 5\tau \right) \right) = 1\ (k=1,2,3,4),\\
\mathrm{ord}_{k/10}^{(50)}\left( F\left( \tau \right) \right) &= 1,&\mathrm{ord}_{k/10}^{(50)}\left( F\left( 5\tau \right) \right) = 1\ (k=1,3,7,9).
\end{align*}  From this, we have 

\begin{align*}
\mathrm{ord}_{\infty}^{(50)}\left( \frac{F\left( \tau \right)}{F(5\tau)} \right) &= -4,\\
\mathrm{ord}_{1/25}^{(50)}\left( \frac{F\left( \tau \right)}{F(5\tau)} \right) &= 0,\\
\mathrm{ord}_{1/2}^{(50)}\left( \frac{F\left( \tau \right)}{F(5\tau)} \right) &= 4,\\
\mathrm{ord}_{0}^{(50)}\left( \frac{F\left( \tau \right)}{F(5\tau)} \right) &= 4,\\
\mathrm{ord}_{k/5}^{(50)}\left( \frac{F\left( \tau \right)}{F(5\tau)} \right) &= 0,\ (k=1,2,3,4),\\
\mathrm{ord}_{k/10}^{(50)}\left( \frac{F\left( \tau \right)}{F(5\tau)} \right) &= 0,\ (k=1,3,7,9).\\
\end{align*}  Therefore, we have

\begin{align*}
\frac{F(\tau)}{F(5\tau)}\in\mathcal{M}^{\infty}(\Gamma_0(50)),
\end{align*} with a zero of order 4 at the cusp $[1/2]_{50}$.  On the other hand, if we take

\begin{align*}
W_{i,l,m}(\tau):=\frac{Z(\tau)^{1-i}\cdot y(\tau)^l}{y(5\tau)^m},
\end{align*} we can use Ligozat's theorem to show that

\begin{align*}
\mathrm{ord}_{0}^{(50)}\left( W_{i,l,m}(\tau) \right) &= -(1-i) - 5l + m,\\
\mathrm{ord}_{1/2}^{(50)}\left( W_{i,l,m}(\tau) \right) &= -2(1-i),\\
\mathrm{ord}_{k/5}^{(50)}\left( W_{i,l,m}(\tau) \right) &= m\ (k=1,2,3,4),\\
\mathrm{ord}_{k/10}^{(50)}\left( W_{i,l,m}(\tau) \right) &= l\ (k=1,3,7,9).
\end{align*}  Therefore, if we let

\begin{align*}
G_{i,l,m}(\tau):=\frac{F(\tau)W_{i,l,m}(\tau)}{F(5\tau)},
\end{align*} then we have

\begin{align*}
\mathrm{ord}_{0}^{(50)}\left( G_{i,l,m}(\tau) \right) &= 4 -(1-i) - 5l + m,\\
\mathrm{ord}_{1/2}^{(50)}\left( G_{i,l,m}(\tau) \right) &= 4 -2(1-i),\\
\mathrm{ord}_{k/5}^{(50)}\left( G_{i,l,m}(\tau) \right) &= m\ (k=1,2,3,4),\\
\mathrm{ord}_{k/10}^{(50)}\left( G_{i,l,m}(\tau) \right) &= l\ (k=1,3,7,9).
\end{align*}  Inspection of the relations in Appendix II quickly reveals that $4 -(1-i) - 5l + m = 0$ for each relation of the form (\ref{initialTypeA}).  Moreover, $4 -2(1-i)$ is clearly positive for $i=0,1$.  The final two orders are positive, as $m,l >0$.

For relations of the form (\ref{initialTypeB}), our right-hand side has the form

\begin{align*}
&\frac{1}{y(\tau)^2}\cdot x(\tau)\cdot\frac{1}{F(\tau)}\cdot U_5\left( F(\tau)\cdot Z(\tau)^{1-i}\right)\\
&=U_5\left( \frac{F(\tau)}{F(5\tau)}\cdot \frac{Z(\tau)^{1-i}\cdot x(5\tau)}{y(5\tau)^2} \right).
\end{align*}  If we take

\begin{align*}
W_{i}(\tau):=\frac{Z(\tau)^{1-i}\cdot x(5\tau)}{y(5\tau)^2},
\end{align*} we can use Ligozat's theorem to show that

\begin{align*}
\mathrm{ord}_{0}^{(50)}\left( W_{i}(\tau) \right) &= 1-i,\\
\mathrm{ord}_{1/2}^{(50)}\left( W_{i}(\tau) \right) &= 1-2i,\\
\mathrm{ord}_{k/5}^{(50)}\left( W_{i}(\tau) \right) &= 1\ (k=1,2,3,4),\\
\mathrm{ord}_{k/10}^{(50)}\left( W_{i}(\tau) \right) &= 1\ (k=1,3,7,9).
\end{align*}  Therefore, if we let 

\begin{align*}
G_{i}(\tau):=\frac{F(\tau)W_{i,l,m}(\tau)}{F(5\tau)},
\end{align*} then we have

\begin{align*}
\mathrm{ord}_{0}^{(50)}\left( G_{i}(\tau) \right) &= 5-i,\\
\mathrm{ord}_{1/2}^{(50)}\left( G_{i}(\tau) \right) &= 5-2i,\\
\mathrm{ord}_{k/5}^{(50)}\left( G_{i}(\tau) \right) &= 1\ (k=1,2,3,4),\\
\mathrm{ord}_{k/10}^{(50)}\left( G_{i}(\tau) \right) &= 1\ (k=1,3,7,9).
\end{align*}  These orders are all nonnegative.

The functions inside the $U_5$ operators on the right hand sides of each of our prospective relations are each elements of $\mathcal{M}^{\infty}(\Gamma_0(50))$.  Therefore, the $U_5$ operator pushes each to an element of $\mathcal{M}^{\infty}(\Gamma_0(50/5))=\mathcal{M}^{\infty}(\Gamma_0(10))$ \cite[Lemma 4.4]{Paule4}.

We have verified that the left hand side of each of our relation can be be put into a relation of the form (\ref{initialTypeA}) or (\ref{initialTypeB}), in which either side is an element of $\mathcal{M}^{\infty}(\Gamma_0(10))$.  All that remains is to examine the principal parts and constants of each of these relations.

This approach can also be used to prove (\ref{L1S}).  In this case, we want to prove that

\begin{align}
U_5&\left( \frac{L_0(\tau)}{F(5\tau)}\cdot \frac{Z(\tau)\cdot x(5\tau)^3}{y(5\tau)^5} \right)\nonumber\\ &= \left( 120 y^{-4} + 1805 y^{-3} + 12050 y^{-2} + 39500 y^{-1} + 50000 \right).\label{L1overy}
\end{align}  If we define

\begin{align*}
W_y := \frac{Z(\tau)\cdot x(5\tau)^3}{y(5\tau)^5},
\end{align*} then

\begin{align*}
\mathrm{ord}_{0}^{(50)}\left( W_{y}(\tau) \right) &= 1,\\
\mathrm{ord}_{1/2}^{(50)}\left( W_{y}(\tau) \right) &= 1,\\
\mathrm{ord}_{k/5}^{(50)}\left( W_{y}(\tau) \right) &= 2\ (k=1,2,3,4),\\
\mathrm{ord}_{k/10}^{(50)}\left( W_{y}(\tau) \right) &= 3\ (k=1,3,7,9).
\end{align*}  If we let 

\begin{align*}
G_{y}(\tau):=\frac{W_{y}(\tau)}{F(5\tau)},
\end{align*} then

\begin{align*}
\mathrm{ord}_{0}^{(50)}\left( G_{y}(\tau) \right) &\ge -1+1,\\
\mathrm{ord}_{1/2}^{(50)}\left( G_{y}(\tau) \right) &\ge -1+1,\\
\mathrm{ord}_{k/5}^{(50)}\left( G_{y}(\tau) \right) &\ge -1+2\ (k=1,2,3,4),\\
\mathrm{ord}_{k/10}^{(50)}\left( G_{y}(\tau) \right) &\ge -1+2\ (k=1,3,7,9).
\end{align*}  These orders are again all nonnegative.  Because $L_0\in\mathcal{M}_2(\Gamma_0(10))$, it will contribute no poles, and we need not examine it.  In this case, the principal part on either side of (\ref{L1overy}) takes the form

\begin{align*}
\frac{120}{q^4} + \frac{365}{q^3} + \frac{2765}{q^2} + \frac{5030}{q} + 9375.
\end{align*}

As a final application, we consider the proof of (\ref{modY}).  We can use Ligozat's theorem to determine that $y(5\tau)^{-1}\in\mathcal{M}^{\infty}\left(\Gamma_0(50)\right)$, and that $y(5\tau)^{-5}\cdot y(\tau)\in\mathcal{M}^{\infty}\left(\Gamma_0(50)\right)$.  As such, the principal part and constant of

\begin{align}
y(5\tau)^{-25}\cdot\left(y^5+\sum_{j=0}^4 a_j(5\tau) y^j\right)
\end{align} can quickly be verified to be 0, thus giving us (\ref{modY}).

\section{Necessity of the Paule--Radu Technique For Genus 1 Surfaces}

The proof strategy used above was able to give a single-variable proof of Theorem \ref{Thm12}.  We certainly believe that such a strategy can be extended to other families of congruences over any genus 0 curve.  It is tempting to try a similar proof for any family of congruences over a genus 1 curve in which the standing proof employs the Paule--Radu technique, e.g., the proof of the Andrews--Sellers theorem.  However, this is very likely a hopeless endeavor, and we will try to briefly explain why.  For a more in-depth discussion of the relationship between genus and the rank of the $\mathbb{Z}[X]$ modules of interest to us, see \cite{Paule3}.

Wang and Yang define \cite{Wang2} the functions

\begin{align*}
\rho &= \frac{\eta(2\tau)^2\eta(5\tau)^4}{\eta(\tau)^4\eta(10\tau)^2}\in\mathcal{M}^{0}\left( \Gamma_0(10) \right),& t = \frac{\eta(5\tau)^2\eta(10\tau)^2}{\eta(\tau)^2\eta(2\tau)^2}\in\mathcal{M}\left( \Gamma_0(10) \right).
\end{align*}  They go on to demonstrate that for all $\alpha \ge 1$,

\begin{align*}
\frac{L_{\alpha}}{F}\in \left< t,\rho t \right>_{\mathbb{Z}[t]}.
\end{align*}  This is indicative of the machinery of the Paule--Radu technique, which has been successfully applied in cases such as proving the Andrews--Sellers conjecture \cite{Paule} and the Choi--Kim--Lovejoy conjecture \cite{Smoot}.  These latter problems could not be solved by the classical techniques developed by Watson in 1938 (and indeed, known by Ramanujan himself some decades prior), in which the relevant functions $L_{\alpha}$ are understood as elements of $\left< 1 \right>_{\mathbb{Z}[t]} = \mathbb{Z}[t]$, for a given modular function $t$.

In the case of the Andrews--Sellers conjecture, the necessary spaces of functions are free rank 2 $\mathbb{Z}[X]$-modules.  Why does the Andrews--Sellers conjecture necessitate this more complex module structure while the functions associated with $\mathrm{spt}_{\omega}(n)$) can be described with the simpler localized ring $\mathbb{Z}[X]_{\mathcal{S}}$?

To understand this, consider the order of the functions $t, \rho$ over the cusps of $\mathrm{X}_0(10)$:

\begin{align*}
&\mathrm{ord}^{(10)}_{\infty}(t) = 1, \ \ \ \ \ \ \ \  \mathrm{ord}^{(10)}_{\infty}(\rho) = 0, \ \ \ \ \ \ \mathrm{ord}^{(10)}_{\infty}(x) = 0\\
&\mathrm{ord}^{(10)}_{0}(t) = -1, \ \ \ \ \  \mathrm{ord}_{0}^{(10)}(\rho) = -1, \ \ \ \ \  \mathrm{ord}^{(10)}_{0}(x) = -1\\
&\mathrm{ord}^{(10)}_{1/2}(t) = -1, \ \ \ \ \mathrm{ord}_{1/2}^{(10)}(\rho) = 0, \ \ \ \ \ \ \mathrm{ord}^{(10)}_{1/2}(x) = 1\\
&\mathrm{ord}^{(10)}_{1/5}(t) = 1, \ \ \ \ \ \ \  \mathrm{ord}^{(10)}_{1/5}(\rho) = 1, \ \ \ \ \ \  \mathrm{ord}^{(10)}_{1/5}(x) = 0\\
\end{align*}

Notice that $xt, \rho\in\mathcal{M}^{0}(10)$, with $\mathrm{ord}^{(10)}_0(xt) = -2$, $\mathrm{ord}^{(10)}_0(\rho) = \mathrm{ord}^{(10)}_0(x) -1$.  By the Weierstrass gap theorem \cite{Paule3},

\begin{align*}
\mathcal{M}^{0}\left( \Gamma_0(10) \right) = \mathbb{C}[x],
\end{align*} ensuring that $\rho\in\mathbb{C}[x]$ and $t\in\mathbb{C}[x^{-1},x]$.

To more easily examine the expansion of $\rho, xt$ at 0, we can first apply the transformation $\tau\rightarrow -1/\tau$ and examine their expansions (in a uniformitized variable) at $\infty$:

\begin{align*}
x\left( -1/\tau \right) = \frac{1}{4} \frac{\eta(\tau/2)^5\eta(\tau/5)}{\eta(\tau)^5\eta(\tau/10)} = \frac{1}{4}h(\tau/10).
\end{align*}  It is easy to verify that

\begin{align*}
h(\tau) = \frac{\eta(2\tau)\eta(5\tau)^5}{\eta(\tau)\eta(10\tau)^5}\in\mathcal{M}^{\infty}\left( \Gamma_0(10) \right).
\end{align*}  Similarly, we have

\begin{align*}
t\left( -1/\tau \right) &= \frac{1}{25} \frac{\eta(\tau/5)^2\eta(\tau/10)^2}{\eta(\tau)^2\eta(\tau/2)^2} = \frac{1}{25}t(\tau/10)^{-1},\\
\rho\left( -1/\tau \right) &= \frac{1}{5} \frac{\eta(\tau/2)^2\eta(\tau/5)^4}{\eta(\tau)^4\eta(\tau/10)^2} = \frac{1}{5}\left(4\cdot x(-1/\tau)+1\right),
\end{align*} and
\begin{align*}
x\left( -1/\tau \right)\cdot t\left( -1/\tau \right) &= \frac{1}{100} h(\tau/10)\cdot t(\tau/10)^{-1}\\
&= -\frac{1}{25}-\frac{3}{100}h(\tau/10)+\frac{1}{100}h(\tau/10)^2\\
&= -\frac{1}{25}-\frac{3}{25}x(-1/\tau)+\frac{4}{25}x(-1/\tau)^2.
\end{align*} We can now map $-1/\tau$ back to $\tau$, and then isolate $\rho$ and $t$:

\begin{align*}
\rho\left( \tau \right) &= \frac{1}{5}\left(4\cdot x(\tau)+1\right),\\
t\left(\tau \right) &= \frac{1}{25} \left( -x(\tau)^{-1}-3+4\cdot x(\tau)\right).
\end{align*}  Our first attempt to describe $L_1$ in terms of a single function arose from these substitutions into (\ref{L1WY}).

Notice that $h$ is a hauptmodul, i.e., $\mathcal{M}^{\infty}\left( \Gamma_0(10) \right) = \mathbb{C}[h]$.  In the case of the Andrews--Sellers congruences, the corresponding functions are modular over $\mathrm{X}_0(20)$, which has genus 1.  By the Weierstrass gap theorem, we cannot reduce $\mathcal{M}^{\infty}\left( \Gamma_0(20) \right)$ to a 1-variable polynomial ring; indeed, $\mathcal{M}^{\infty}\left( \Gamma_0(20) \right)$ must be isomorphic to a rank 2 module $\left< 1, h_3 \right>_{\mathbb{C}[h_2]}$, for $\mathrm{ord}^{(10)}_{\infty}(h_2) = -2$, $\mathrm{ord}^{(10)}_{\infty}(h_3) = -3$.

Because of this, we are forced to take a rank 2 module, and therefore a two-variable system.  From this, the necessity of the Paule--Radu apparatus must follow.

As a final important note on the potential of our technique to produce new results, we give a very brief description of how we discovered that a localized ring structure was more useful than the standard polynomial ring.  Our initial attempt to describe $L_{\alpha}$ in terms of the function $x$ failed almost immediately: one can verify that with the appropriate substitutions of the function $x$ into (\ref{L1WY}), we get

\begin{align}
\frac{L_1}{F} =& - \frac{624}{625 x^3} - \frac{2487}{625 x^2} + \frac{801}{625 x} -\frac{422}{125}  - \frac{3148 x}{125}\nonumber\\ &+ \frac{19904 x^2}{625} + \frac{512 x^3}{625} - \frac{256 x^4}{625}.\label{L1x}
\end{align}  This clearly does not work.

However, at the advice of Silviu Radu, we attempted to adjust our function $x$.  We discovered the appropriate substitution in the form of $x=1+5y$.  Substituting into (\ref{L1x}) and simplifying, we derive (\ref{L1S}).

The critical point is that we would prefer the function needed to annihilate the poles of $L_{\alpha}$ to be equal to (or a power of) the function used to describe the right-hand side of the witness identity.  This is the situation that the author has studied, together with Radu, in \cite{Raduns}.  It is of course far more probable that these functions are not equal.  Nevertheless, if the function used on the right-hand side, e.g., $y$, is a hauptmodul, then we could still use this function to describe our prefactor function (e.g., $x=1+5y$).  This necessarily induces a localized ring.

Such ring structure may appear more complicated, and more daunting, than that of the traditional methods.  Nevertheless, as we hope to have shown, the resulting complications and fears can be overcome.

\pagebreak
\section{Appendix I}

We provide the tables used in the proof of Theorem \ref{w12w1}.  These can easily be constructed by hand.  We provide additional details on this and other computations in our Mathematica supplement at \url{https://www3.risc.jku.at/people/nsmoot/online3.nb}.

\begin{table}[hbt!]
\begin{center}
\begin{tabular}{l|c|c|r}
 $m$      &$r=1$  & $r=2$ & $r=3$ \\
\hline
 1         & -1            & 0 & 1 \\
 2         & -1            & 0 & 1  \\
 3         & -1            & 1 & 0   \\
 4         & 0             & 1 & 1   \\
 5         & 1             & 2 & 1  \\
 6         &              & 2 & 2  
 \end{tabular}
\caption{Value of $\displaystyle{\theta(m) + \pi_1(m,r) + \pi_0(r,1) -2}$ with $1\le m\le 6,$  $1\le r\le 3$}\label{tablew1}
\end{center}
\end{table}

\begin{table}[hbt!]
\begin{center}
\begin{tabular}{l|c|c|r}
 $m$      &$r=1$     & $r=2$ & $r=3$ \\
\hline
 1         & -1            & 0     & 1 \\
 2         & -1            & 0     & 1 \\
 3         & -1            & 1     & 0 \\
 4         & 0             & 1      & 1\\
 5         & 1             & 2      & 1\\
 6         &              & 2       & 2 
 \end{tabular}
\caption{Value of $\displaystyle{\theta(m) + \pi_1(m,r) + \pi_0(r,2) -2}$ with $1\le m\le 6,$  $1\le r\le 3$}\label{tablew2}
\end{center}
\end{table}

\begin{table}[hbt!]
\begin{center}
\begin{tabular}{l|c|c|r}
 $m$      &$r=1$     & $r=2$ & $r=3$ \\
\hline
 1         & 0            & 0     & 2\\
 2         & 0            & 0     & 2\\
 3         & 0            & 1     & 1\\
 4         & 1             & 1      & 2\\
 5         & 2             & 2      & 2\\
 6         &              & 2       & 3  
\end{tabular}
\caption{Value of $\displaystyle{\theta(m) + \pi_1(m,r) + \pi_0(r,3) -2}$ with $1\le m\le 6,$  $1\le r\le 3$}\label{tablew3}
\end{center}
\end{table}

\pagebreak

\section{Appendix II}

Below we list the ten fundamental relations that are justified using our cusp analysis in Section 6.  For the complete derivation of the 50 relations used in Theorem \ref{thmuyox}, see our Mathematica supplement online at \url{https://www3.risc.jku.at/people/nsmoot/online3.nb}.

\begin{align}
\text{Group I:}&\nonumber\\
U^{(1)}\left( 1 \right) &=\frac{1}{1+5y}\left( 1+5^2y+16\cdot 5\cdot y^2 \right)\\
U^{(1)}\left( y \right) &= y\\
U^{(1)}\left( y^2 \right) &=51 y + 471\cdot 5\cdot y^2 + 1364\cdot 5^2\cdot y^3 + 1776\cdot 5^3\cdot y^4\nonumber\\ &+ 1088\cdot 5^4\cdot y^5 + 256\cdot 5^5\cdot y^6\\
U^{(1)}\left( y^3 \right) &= 41 y + 2474\cdot 5\cdot y^2 + 29193\cdot 5^2\cdot y^3 + 152248\cdot 5^3\cdot y^4\nonumber\\ &+ 2231024\cdot 5^3\cdot y^5 +  814336\cdot 5^5\cdot y^6 + 4833536\cdot 5^5\cdot y^7\nonumber\\ &+ 3753984\cdot 5^6\cdot y^8 +  1847296\cdot 5^7\cdot y^9 + 524288\cdot 5^8\cdot y^{10}\nonumber\\ &+ 65536\cdot 5^9\cdot y^{11}\\
U^{(1)}\left( y^4 \right) &=11 y + 3981\cdot 5\cdot y^2 + 138181\cdot 5^2\cdot y^3 + 8956203\cdot 5^2\cdot y^4\nonumber\\ &+ 62033852\cdot 5^3\cdot y^5 +  53739872\cdot 5^5\cdot y^6 + 791357952\cdot 5^5\cdot y^7\nonumber\\ &+ 1662808832\cdot 5^6\cdot y^8 +  2561985536\cdot 5^7\cdot y^9\nonumber\\ &+ 14663327744\cdot 5^7\cdot y^{10} +  2496888832\cdot 5^9\cdot y^{11}\nonumber\\ &+ 7817854976\cdot 5^9\cdot y^{12} +   3503816704\cdot 5^{10}\cdot y^{13}\nonumber\\ &+ 1065353216\cdot 5^{11}\cdot y^{14} +  197132288\cdot 5^{12}\cdot y^{15}\nonumber\\ &+ 16777216\cdot 5^{13} y^{16}\\
\text{Group II:}&\nonumber\\
U^{(0)}\left( 1 \right) &=\frac{1}{1+5y}\left( -5y-4\cdot 5\cdot y^2 \right)\\
U^{(0)}\left( y \right) &= 5y+4\cdot 5\cdot y^2\\
U^{(0)}\left( y^2 \right) &=5 y + 153\cdot 5\cdot y^2 + 3956\cdot 5\cdot y^3 + 8528\cdot 5^2\cdot y^4 + 9152\cdot 5^3\cdot y^5\nonumber\\ &+ 4864\cdot 5^4\cdot y^6 +  1024\cdot 5^5\cdot y^7\\
U^{(0)}\left( y^3 \right) &= y + 1874 y^2 + 40101\cdot 5\cdot y^3 + 309864\cdot 5^2\cdot y^4\nonumber\\ &+ 1252624\cdot 5^3\cdot y^5 + 
 3071232\cdot 5^4\cdot y^6 + 4892928\cdot 5^5\cdot y^7\nonumber\\ &+ 26039296\cdot 5^5\cdot y^8 +  18464768\cdot 5^6\cdot y^9 + 8404992\cdot 5^7\cdot y^{10}\nonumber\\ &+ 2228224\cdot 5^8\cdot y^{11} +  262144\cdot 5^9\cdot y^{12}\\
U^{(0)}\left( y^4 \right) &= 329\cdot 5\cdot y^2 + 116926\cdot 5\cdot y^3 + 2285653\cdot 5^2\cdot y^4\nonumber\\ &+ 21410212\cdot 5^3\cdot y^5 +  119101984\cdot 5^4\cdot y^6 + 438497152\cdot 5^5\cdot y^7\nonumber\\ &+ 45458688\cdot 5^8\cdot y^8 +  2150618112\cdot 5^7\cdot y^9 + 3033554944\cdot 5^8\cdot y^{10}\nonumber\\ &+  3217784832\cdot 5^9\cdot y^{11} + 12811829248\cdot 5^9\cdot y^{12}\nonumber\\ &+  37793038336\cdot 5^9\cdot y^{13} + 16051601408\cdot 5^{10}\cdot y^{14}\nonumber\\ &+  4647288832\cdot 5^{11}\cdot y^{15} + 822083584\cdot 5^{12}\cdot y^{16}\nonumber\\ &+  67108864\cdot 5^{13}\cdot y^{17}
\end{align}

\pagebreak

\section{Acknowledgments}
The research was funded by the Austrian Science Fund (FWF): W1214-N15, project DK6, and by the strategic program ``Innovatives OÖ 2010 plus" by the Upper Austrian Government.  My fondest thanks to the Austrian Government for their generous support.

I am extremely grateful to Liuquan Wang and Yifan Yang for their work on this family of congruences, without which this project would never have developed.  I am enormously grateful to Drs. Cristian-Silviu Radu and Johannes Middeke for their advice on technical matters, and to Koustav Banerjee for calming my doubts and insisting that my insights were worth investigating.  Finally, an enormous thanks to Professor Peter Paule for his patience, as the prospects of this project waxed and waned time and again.

\end{document}